\documentclass[11pt,leqno,amscd,amssymb]{amsart}
\usepackage[english]{babel}
\usepackage{amsfonts,amssymb,graphicx,amsmath,amsthm,hyperref,fullpage,tikz,amscd,mathrsfs}\usepackage{enumitem}
\usetikzlibrary{matrix}
\usetikzlibrary{cd}

\usepackage{mathtools}
\oddsidemargin 0in \evensidemargin 0in \textwidth 6.6in
\allowdisplaybreaks


\newcommand{\Hom}{\text{\rm Hom}}

\newcommand{\Ext}{\operatorname{Ext}}

\newcommand{\cH}{\mathcal{H}}
\newcommand{\N}{{\mathbb N}}

\newcommand{\Z}{\mathbb Z}

\newcommand{\pde}{\mathcal P^d_{q,e}}

\newcommand{\td}{^{\otimes d}}
\newcommand{\te}{^{\otimes e}}

\newtheorem{thm}{Theorem}[section] 

\newtheorem{lem}[thm]{Lemma}
\newtheorem{cor}[thm]{Corollary}

\newtheorem{prop}[thm]{Proposition}
\theoremstyle{definition}

\newtheorem{defn}[thm]{Definition}

\newtheorem{rem}[thm]{Remark}


\newcommand{\inj}{\hookrightarrow}

\newcommand{\Id}{\operatorname{Id}}
\newcommand{\GL}{\operatorname{GL}}

\newcommand{\uqgln}{U_q(\mathfrak{gl}_n)}

\newcommand{\pd}{\mathcal P^d_{q}}

\begin{document}

\title{Quantum polynomial functors from $e$-Hecke pairs}
\author{Valentin Buciumas}
\address{Einstein Institute of Mathematics, Edmond J. Safra Campus, Givat Ram, The Hebrew
University of Jerusalem, Jerusalem, 91904, Israel}
\email{valentin.buciumas@gmail.com}
\author{Hankyung Ko}
\address{Mathematisches Institut, Universit{\"a}t Bonn, Endenicher Allee 60,
53115 Bonn, Germany}
\email{hankyung@math.uni-bonn.de}
\date{}
\maketitle

\begin{abstract}
We define a new category of quantum polynomial functors extending the quantum polynomials introduced by Hong and Yacobi. We show that our category has many properties of the category of Hong and Yacobi and is the natural setting in which one can define composition of quantum polynomial functors. Throughout the paper we highlight several key differences between the theory of classical and quantum polynomial functors.  
\end{abstract}


\section{Introduction}

Hong and Yacobi \cite{HongYacobi} introduced a category of quantum polynomial functors which quantizes the strict polynomial functors of Friedlander and Suslin \cite{FS}. The purpose of this paper is to introduce higher level categories of quantum polynomial functors that extend the construction of Hong and Yacobi and explain why they give a natural quantization of classical polynomial functors. The most visible advantage of our definition is that we are now able to compose quantum polynomial functors.

A polynomial functor is defined as a functor between vector spaces which is polynomial on the space of morphisms. The definition can be formulated as follows. Let $k$ be a field, and let $\mathcal{V}$ be the category of finite dimensional vector spaces over $k$. The $d$-th symmetric group $S_d$ acts on $V^{\otimes d}$ by permuting tensor factors. Let $\Gamma^d \mathcal{V}$ be the category which has the same objects as $\mathcal{V}$ does while the set of morphisms between $V, W \in \mathcal{V}$ is
\begin{equation}\label{eq:homsp}
\text{Hom}_{\Gamma^d\mathcal{V}}(V, W) = \Gamma^d\text{Hom}(V,W) =(\text{Hom}(V, W)^{\otimes d})^{S_d}.  
\end{equation}
The category $\mathcal{P}^d$ of polynomial functors of degree $d$ is the category of linear functors $F: \Gamma^d \mathcal{V} \to \mathcal{V}$. A polynomial functor is by definition an object in the category $\mathcal P=\bigoplus_{d\geq 0} \mathcal{P}^d$.

The category $\mathcal{P}$ has initially been introduced by \cite{FS} to prove the cohomological finite generation of finite group schemes over a field, where they use certain rational cohomology computations for the general linear groups effectively done in the category of polynomial functors.
Another example of important rational cohomology result obtained from the polynomial functors is the untwisting of Frobenius due to Cha\l upnik \cite{Chaluntwist} and Touz\'e \cite{Touzeuntwisting}.
Polynomial functors can also be used to compute cohomology for other classical algebraic groups (see \cite{TouzeAIM} and references therein). 
In a different direction, Hong, Touz\'e and Yacobi (\cite{HTY}, \cite{HY2}) showed that 
there is a categorical action of $\widehat{\mathfrak{sl}_p}$ on $\mathcal{P}$ that categorifies the action of $\widehat{\mathfrak{sl}_p}$ on the Fock space, where $p$ is the characteristic of the base field.

Two instrumental properties in the theory of polynomial functors are representability and composition. Representability allows one to prove that the category $\mathcal{P}^d$ is equivalent to the category of modules of the Schur algebra $S(n;d)$ for any $n \geq d$, or the category of polynomial $\GL_n$ representations of degree $d$ (\cite[\S 3]{FS}). 
Composition is a natural property of functors, which is not so natural for modules over a Schur algebra, hence is a main advantage of studying polynomial functors. 
It is extremely useful in performing cohomology calculations.
Given $F,G\in \mathcal P$ and an exact sequence in $\mathcal P$ that represents an element of $\Ext^n_\mathcal{P}(F,G)$, precomposing (respectively, postcomposing, when it is exact) the exact sequence with another polynomial functor $H\in\mathcal{P}$ gives a class in $\Ext^n_\mathcal{P}(FH,GH)$ (resp., $\Ext^n_\mathcal{P}(HF,HG)$). The special case of precomposing by the Frobenius functor, where the assignment is injective, is particularly interesting in many contexts (see \cite{FFSS}). 
In general, this can relate Ext spaces in different degrees and provides, for example, 
\cite[Theorem 2.13]{FS} which is a key technique in main computations in \cite{FS}. 

A natural question is whether one can deform the polynomial functors into quantum polynomial functors.
A first such quantization is due to Hong and Yacobi \cite{HongYacobi}. They introduced a new category $\pd$ that is a $q$-deformation of the category $\mathcal{P}^d$ and showed that it enjoys many properties that $\mathcal{P}^d$ has. As in the classical case where the corresponding category is equivalent to the module category for the Schur algebra, $\pd$ is equivalent to the category of finite dimensional modules for the quantum Schur algebra $S_q(n;d)$ of Dipper and James \cite{DipperJames}. They present several applications to quantum invariant theory, including a quantum version of $(\GL_n, \GL_m)$-Howe duality. However, their category of quantum polynomial functors does not allow for composition of the functors. 

The underlying reason is that the action of $S_d$ in equation \eqref{eq:homsp} is replaced by a quantum action that depends on the extra structure given to the objects. In \cite{HongYacobi}, the domain category of $\pd$ consists of pairs $(V_n, R_n)$, where one can think of $V_n$ as the defining $n$-dimensional $U_q(\mathfrak{gl}_n)$-module or the defining $A_q(n,n)$-comodule (where $A_q(n,n)$ is the quantum coordinate ring of $n \times n$ matrices). The generators of the braid group $\mathcal{B}_d$ act on $V_n^{\otimes d}$ via the standard $R$-matrix $R_n$, the $R$-matrix of the defining $U_q(\mathfrak{gl}_n)$-representation $V_n$. When one applies a quantum polynomial $F$ to $V_n$, there is an $U_q(\mathfrak{gl}_n)$ structure on $F(V_n)$ which produces an $R$-matrix $R_{F(V_n)}$ associated to $F(V_n)$. The problem is then that $R_{F(V_n)}$ is not a standard $R$-matrix, so the pair $(F(V), R_{F(V)})$ is not in the domain category. 

We can also see this in the classical case. The domain of a polynomial functor consists of vector spaces. One can endow such vector spaces with $\GL_n$-module structures. If $V$ is a polynomial representation of degree $e$, then $F \in \pd$ maps $V$ to a polynomial representation $F(V)$ of degree $de$. So we can think of the domain in the classical case as containing finite dimensional polynomial representations of $\GL_n$. The reason why this extra structure of $V$ is not present in the definition of polynomial functors is because $S_d$ acts on $V^{\otimes d}$ in the same way regardless of the $\GL_n$-structure on $V$. In other words, one can view a single polynomial functor as a functor between degree $e$ modules and degree $de$ modules for all $e\in\mathbb N$ at the same time. This is not true for quantum polynomial functors. The braid group acts on $V^{\otimes d}$ via the $R$-matrix of $V$, and different $U_q(\mathfrak{gl}_n)$-modules have different associated $R$-matrices. Therefore, the action of $\mathcal{B}_d$ on $V^{\otimes d}$ depends on the module structure of $V$. If $V$ is the defining representation, the action of the braid group on $V^{\otimes d}$ factors through the Hecke algebra $\mathcal{H}_d$, but if $V$ is the $e$-th tensor power of the defining representation, or its subquotient,, this action factors through the $e$-Hecke algebra $\mathcal{H}_{d,e}$ (see Definition \ref{def:qeHecke}). 
The $e$-Hecke algebras are quantizations of the symmetric group and together play the role which the symmetric group plays in the classical case.

This extra structure should be taken into account in the quantum case. It is then natural to introduce the categories $\pde$ of quantum polynomial functors that map degree $e$ modules to degree $de$ modules. 
In this category, we are able to compose quantum polynomials by Theorem \ref{thm:composition}. 
More precisely, given $G \in \mathcal{P}^{d_2}_{q, e}$ and $F \in \mathcal{P}^{d_1}_{q,d_2 e}$, their composition  $F \circ G$ is a polynomial in $\mathcal{P}_{q,e}^{d_1 d_2}$.

We think of the categories $\pde$ as ``higher'' analogues of $\pd$. They satisfy many of the properties that $\pd$ satisfy. For example we show in Theorem \ref{thm:tensorproductisbraided} that the category $\mathcal P_{q,e}=\bigoplus_d\pde$ is a braided monoidal category. Another fundamental property we prove is that $\pde$ has a (finite) generator when $q$ is generic (Theorem \ref{thm:finitegeneration}). The root of unity case is significantly harder than the generic $q$ case, since the domain category of the functors in $\pde$ consists of degree $e$ polynomial representations of $U_q(\mathfrak{gl}_n)$, which is more complicated then the corresponding category for generic $q$. However, when $e=1$, the category of polynomial representations of degree $1$ is ``the same'' for $q$ a root of unity and for generic $q$; it consists only of direct sums of the defining representation. Therefore our finite generation result holds for any $q$ when $e=1$. 

The generator in Theorem \ref{thm:finitegeneration} is defined in terms of a direct sum even for $e=1$ (when we would hope our category to reduce to the category studied by Hong and Yacobi). This is something that is needed in order to define composition in full generality as we explain in Section \ref{sec:finitegeneration}. We then consider another category of polynomial functors, whose definition involves restricting the domain; we denote the new category by $\mathcal{P}^{\circ, d}_{q,e}$. This category has a projective generator as shown in Theorem \ref{thm:finitegenerationindecomposable} and we explain that when $e=1$ we get back the main result of \cite{HongYacobi} in full generality in Remark \ref{eqtoHY}.

The existence of the projective generators allows us to conclude the equivalence between the categories $\mathcal{P}^{\circ, d}_{q,e}$ and $\pde$ and the category of finite dimensional modules of certain Schur algebra. Such Schur algebras are natural generalizations of the quantum Schur algebra $S_q(n;d)$; we conjecture they also appear in a generalization of quantum Schur-Weyl duality. 

Another interesting difference between the quantum and the classical categories can be seen when taking into consideration $e$-Hecke pairs for all $e$ at once. 
In Section \ref{sec:where}, we define the category $\widetilde{\pd}$ as the category of functors with domain all $e$-Hecke pairs for all $e$. 
For $q=1$, this category is equivalent to the category $\pde$ for any $e$. 
When $q$ is generic, $\widetilde{\pd}$ is not equivalent to $\pde$ for any $e$. 

To further endorse the categories $\pde$ as quantum analogues of strict polynomial functors we give several examples of objects in $\pde$.  The most interesting we believe are the quantum symmetric and exterior powers of Berenstein and Zwicknagl \cite{BZ}. 

We now briefly outline the structure of the paper. 
In Section \ref{sec:prelim}, we introduce the basics of quantum multilinear algebra which are of use throughout the paper. In Section \ref{sec:def}, we define the categories $\pde$, the main objects to be studied in this paper and present several interesting examples of quantum polynomial functors. We focus on the quantum divided, symmetric and exterior power (due to Berenstein and Zwicknagl \cite{BZ}). 
We show that the definition of these objects produce quantum polynomial functors, so their construction fits in our framework. 
In Section \ref{section:braiding}, we show that the category of quantum polynomial functors is a braided monoidal category. In Section \ref{section:composition}, we explain how two quantum polynomial functors can be composed in our setting.  
In Section \ref{sec:finitegeneration}, we show that the categories $\pde$ and $\mathcal{P}^{\circ, d}_{q,e}$ have a (finite) projective generator for generic $q$. This immediately implies equivalence to the category of modules of a ``generalized'' $q$-Schur algebra. 
In Section \ref{sec:where}, we consider a different category $\widetilde{\pd}$ with domain all $e$-Hecke pairs for all $e \geq 0$ and we show that this category does not contain a projective generator for generic $q$. In Section \ref{sec:rootsofunity}, we discuss quantum polynomial functors at roots of unity.


$\bf{Acknowledgements:}$ We would like to thank Catharina Stroppel, Antoine Touz\'e and Daniel Tubbenhauer for many helpful discussions. We would like to thank Jiuzu Hong and Oded Yacobi for offering helpful suggestions to a preliminary version of the paper. We would like to thank the Max Planck Institute for Mathematics in Bonn for excellent working conditions; the first author was supported by the Institute during the time the paper was written. We would like to thank an anonymous referee for a careful reading of the manuscript and helpful suggestions. 

\section{Preliminaries}\label{sec:prelim}

Let $k$ be a field. Let $q$ be an element of $k^\times$. We say $q$ is \textit{generic} if $q$ is not a root of unity. In this section we introduce several objects and prove some properties which will be of use in defining quantum polynomial functors. We note that several of these definitions and some of the properties are taken directly from \cite{HongYacobi}. 

\subsection{Yang-Baxter spaces}
Let $\mathcal{V}$ be the category of finite dimensional vector spaces over the
field $k$. Each $V \in \mathcal{V}$ comes with a chosen basis $\{ v_1, \cdots, v_n\}$ where $n =\text{dim}(V)$. Even though our results are independent of the chosen basis, the exposition is more clear if we associate a fixed basis to each vector space. 

Let $\tau:V \otimes W \to W \otimes V$ be the flip
operator, namely $\tau(v \otimes w) = w \otimes v$. Let $S_d$ be
the symmetric group on $d$ letters. Let $\mathcal{B}_d$ be the Artin braid
group generated by $T_i, \,1  \leq i \leq d-1$ subject to the relations
\begin{equation} \label{braidrelations}
\begin{split}
T_i T_j = T_j T_i \,\,\, \text{if} \,\,\, |i-j|>1 \\
T_i T_{i+1} T_i = T_{i+1} T_i T_{i+1}
\end{split}
\end{equation}
The Hecke algebra $\cH_d$ is the quotient of the braid group $\mathcal{B}_d$ by the relations 
\[(T_i-q)(T_i+q^{-1}) = 0, \forall i.\] 
For $V \in \mathcal{V}$, $R \in \text{End} (V \otimes V)$ is called an
$R$-$matrix$ if it satisfies the Yang-Baxter equation:
\begin{equation} \label{YBE}
R_{12}R_{23}R_{12} = R_{23}R_{12}R_{23}
\end{equation}
where $R_{12} = R \otimes 1_V \in \text{End}(V^{\otimes 3})$ and
$R_{23} = 1_V \otimes R \in \text{End}(V^{\otimes 3})$.

If $R \in \text{End}(V \otimes V)$ is an $R$-matrix, we call the pair $(V,R)$
a $\it{Yang}$-$\it{Baxter}$ $\it{space}$. To each pair we can associate a right representation,
$\rho_{d,V}:\mathcal{B}_d \to \text{End}(V^{\otimes d})$ that sends $T_i$ to $1_{V^{\otimes i}} \otimes R \otimes 1_{V^{\otimes d-i-1}}$. We will
most of the time use the short hand notation $V$ for the Yang-Baxter space $(V,R)$ and denote
the $R$-matrix in the pair $(V, R)$ by $R:=R_V$. 

We now define the quantum Hom-space algebra as it is defined by Hong and Yacobi
\cite{HongYacobi}. Given two Yang-Baxter spaces $V$ and $W$ with basis $\{ v_i \}$ and $\{ w_j \}$, respectively, let $T(V,W)$
be the tensor algebra of $\text{Hom}(V,W)$, that is
\[ T(V,W) = \oplus_{d \geq 0} T(V,W)_d \]
where $T(V,W)_d := \text{Hom}(V,W)^{\otimes d} \cong \text{Hom}
(V^{\otimes d}, W^{\otimes d})$. Let $I(V,W)$ be the two sided ideal
generated by
$ X \circ R_V - R_W \circ X,\textmd{   for all  } X \in \text{Hom}(V^{\otimes 2},
W^{\otimes 2}).$
Define $A(V,W) := T(V,W)/I(V,W)$. The space $A(V,W)$ has a natural gradation
\[A(V,W) = \oplus_{d \geq 0} A(V,W)_d\]
where $A(V,W)_d = T(V,W)_d/I(V,W)$.

Denote by
$x_{ji} : W \to V$ the map
\[ x_{ji} (w_k) = \delta_{kj} v_i \]
with $x_{ji} \in \text{Hom}(W,V) \subset A (W,V)$.

\begin{lem} \label{definition:AVW}
The algebra $A(W,V)$ has a presentation by the generators $x_{ji}$
and the relations generated by
\begin{equation} \label{RTT}
 \sum_{k,l} (R^{pq}_{W,kl} x_{ki}x_{lj} - R^{kl}_{V,ij}x_{pk}x_{ql} ),
\end{equation}
where the coefficients $R^{kl}_{V,ij}$ are defined by the following equation:
\[ R_V (v_i \otimes v_j) = \sum_{k,l} R^{kl}_{V, ij} v_k \otimes v_l. \]
\end{lem}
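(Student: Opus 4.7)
The plan is to prove two things: (i) the elements $x_{ji}$ generate $A(W,V)$ as an algebra, and (ii) the RTT relations span (as a vector space, and therefore generate as a two-sided ideal) the set of generators of $I(W,V)$ given in the definition.

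For (i), observe that the $n_W n_V$ matrix units $x_{ji}$ (where $n_W=\dim W$, $n_V=\dim V$) form a basis of $\Hom(W,V)=T(W,V)_1$. Since the tensor algebra on a vector space is generated by its degree-one part, it follows that the $x_{ji}$ generate $T(W,V)$, and hence their images generate $A(W,V)=T(W,V)/I(W,V)$.

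For (ii), the key point is that the defining generators $X\circ R_W-R_V\circ X$ of $I(W,V)$ (with $X\in\Hom(W^{\otimes 2},V^{\otimes 2})$) all lie in degree two, and the assignment $X\mapsto X\circ R_W-R_V\circ X$ is $k$-linear in $X$. Therefore the space of such generators is spanned by the images of any basis of $\Hom(W^{\otimes 2},V^{\otimes 2})$. A natural basis is $\{x_{pi}\,x_{qj}\}_{p,q,i,j}$, since under the identification $T(W,V)_2\cong\Hom(W^{\otimes 2},V^{\otimes 2})$ the element $x_{pi}x_{qj}$ is the map sending $w_a\otimes w_b\mapsto\delta_{ap}\delta_{bq}\,v_i\otimes v_j$. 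Thus it suffices to compute $X\circ R_W-R_V\circ X$ for $X=x_{pi}x_{qj}$, re-express each of the two terms in the basis $\{x_{ki}\,x_{lj}\}$ and $\{x_{pk}\,x_{ql}\}$ respectively using the structure constants $R^{ab}_{W,cd}$ and $R^{kl}_{V,ij}$, and verify that the result equals precisely the relation \eqref{RTT}. This is a short direct computation: evaluating $(X\circ R_W)(w_a\otimes w_b)$ produces the coefficient $R^{pq}_{W,ab}$ in front of $v_i\otimes v_j$, hence $X\circ R_W=\sum_{k,l}R^{pq}_{W,kl}\,x_{ki}x_{lj}$, and similarly $R_V\circ X=\sum_{k,l}R^{kl}_{V,ij}\,x_{pk}x_{ql}$.

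Combining these two steps identifies the span of the RTT relations with the span of the defining generators of $I(W,V)$, so both yield the same two-sided ideal in the free algebra on the $x_{ji}$, completing the proof. The main (and only) technical obstacle is the bookkeeping with four indices in the degree-two computation; once the basis $\{x_{pi}x_{qj}\}$ is fixed and the linearity of $X\mapsto X\circ R_W-R_V\circ X$ is exploited, no further input is required.
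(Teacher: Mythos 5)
Your proposal is correct and takes essentially the same route as the paper: identify the basis $\{x_{pi}x_{qj}\}$ of $\Hom(W^{\otimes 2},V^{\otimes 2})$, and show that under this identification the generating relations $X\circ R_W - R_V\circ X$ of $I(W,V)$ are exactly the RTT elements~\eqref{RTT}. The paper's own proof is three terse sentences asserting this; you have spelled out the linearity reduction and the degree-two index computation that the paper leaves implicit, but there is no difference in approach.
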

\begin{proof}
Elements of the form $x_{ij}x_{kl}$ form a basis of $\text{Hom}(W^{\otimes 2}, V^{\otimes 2})$.
The quadratic relations \eqref{RTT} are exactly the relations that generate $R(W,V)$. Since $R(W,V)$ generates $I(W,V)$, the result follows.
\end{proof}

There is a degree preserving morphism of algebras
\[ \Delta_{V,W,U}: A(V,U) \to A(V,W) \otimes A(W,U) \]
that is given on generators by $\Delta_{V,W,U} (x_{ij}) =
\sum_k x_{ik} \otimes x_{kj}$. There is a map $V \to W \otimes \text{Hom}(W,V)$ given by
\[ v_i \mapsto \sum_j w_j \otimes x_{ji}. \]
This extends to a map
$\Delta_{V,W}:V \to W \otimes A(W,V)$.

\begin{prop} \label{diagram}
The following diagram commutes:
\begin{center}
\begin{tikzpicture}
  \matrix (m) [matrix of math nodes,row sep=3em,column sep=5em,minimum width=2em]
  {
     V & W \otimes A(W,V) \\
     U \otimes A(U,V) &U \otimes A(U,W) \otimes A(W,V) \\};
  \path[-stealth]
    (m-1-1) edge node [left] {$\Delta_{V,U}$} (m-2-1)
            edge node [above] {$\Delta_{V,W}$} (m-1-2)
    (m-2-1.east|-m-2-2) edge node [below] {$1 \otimes \Delta_{U,W,V}$} (m-2-2)
    (m-1-2) edge node [right] {$\Delta_{W,U} \otimes 1$} (m-2-2) ;
\end{tikzpicture}
\end{center}
\end{prop}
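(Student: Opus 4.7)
The plan is to verify the commutativity by direct computation on a basis element, relying on the fact that both paths land in $U\otimes A(U,W)\otimes A(W,V)$ via formulas that are completely determined by the definitions of $\Delta_{V,W}$ (on vectors) and $\Delta_{U,W,V}$ (on algebra generators). Since the two composites are both $k$-linear maps out of $V$, which has basis $\{v_i\}$, it suffices to check equality on $v_i$.

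First I would follow $v_i$ along the top-then-right path. By definition, $\Delta_{V,W}(v_i)=\sum_j w_j\otimes x_{ji}^{W,V}$, where I write $x^{W,V}_{ji}$ for the generator of $A(W,V)$ to distinguish from generators in $A(U,W)$ and $A(U,V)$. Applying $\Delta_{W,U}\otimes 1$ then gives
\[
\sum_{j}\Delta_{W,U}(w_j)\otimes x^{W,V}_{ji}=\sum_{j,k}u_k\otimes x^{U,W}_{kj}\otimes x^{W,V}_{ji}.
\]
Next I would follow the left-then-bottom path: $\Delta_{V,U}(v_i)=\sum_k u_k\otimes x^{U,V}_{ki}$, and then applying $1\otimes\Delta_{U,W,V}$, which on algebra generators is given by $x^{U,V}_{ki}\mapsto\sum_j x^{U,W}_{kj}\otimes x^{W,V}_{ji}$, yields
\[
\sum_{j,k}u_k\otimes x^{U,W}_{kj}\otimes x^{W,V}_{ji}.
\]
The two expressions agree, which establishes commutativity on the generating set and hence everywhere by linearity.

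The only subtlety, which I would briefly acknowledge, is that $\Delta_{U,W,V}$ is claimed to be a morphism of algebras (not just defined on generators), so applying it to the image of $\Delta_{V,U}$ makes sense; this is a separate fact used implicitly but not proved here. There is nothing else to check, so I do not anticipate a real obstacle: the diagram is nothing more than the coassociativity of the comultiplication $\Delta$ at the level of a vector.
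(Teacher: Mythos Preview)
Your proof is correct and is exactly the approach taken in the paper: a direct computation on a basis vector $v_i$, following both paths to obtain the common expression $\sum_{j,k} u_k\otimes x_{kj}\otimes x_{ji}$. The paper's proof is simply a one-line version of what you wrote.
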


\begin{proof}
One can compute $(1 \otimes \Delta_{U,W,V})\Delta_{V,U} (v_i)= \sum_{k,j} u_j \otimes x_{jk} \otimes x_{ki} 
= (\Delta_{W,U} \otimes 1)\Delta_{V,W}(v_i)$
from which the commutativity of the diagram follows. 
\end{proof}

Let $(V, R_V)$ and $(W, R_W)$ be Yang-Baxter spaces. We define the generalized ($q$-)Schur algebra
\[ S(V,W;d) := (A(W,V)_d)^* \]
 as in \cite{HongYacobi}.
The following is proved in \cite{HongYacobi}:
\begin{prop} \label{prop:homspaceschuralgebra}
Let $V, W$ be Yang-Baxter spaces.
Then there is a natural isomorphism
\[ S(V,W;d) \cong \textnormal{Hom} _{\mathcal{B}_d}(V^{\otimes d},
W^{\otimes d}) \]
\end{prop}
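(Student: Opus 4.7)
The plan is to establish the isomorphism by exhibiting it as a restriction of the trace pairing to the annihilator of the ideal $I(W,V)_d$.

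First I would set up the duality. Since all spaces are finite-dimensional, the trace pairing
$\langle \phi,\psi\rangle = \operatorname{tr}(\phi\circ\psi)$, where $\phi\in\Hom(V^{\otimes d},W^{\otimes d})$ and $\psi\in\Hom(W^{\otimes d},V^{\otimes d})$, gives a canonical identification
$T(W,V)_d^{*} = \Hom(W^{\otimes d},V^{\otimes d})^{*} \cong \Hom(V^{\otimes d},W^{\otimes d})$. Under this identification, the dual of the quotient $A(W,V)_d = T(W,V)_d/I(W,V)_d$ becomes the annihilator
$I(W,V)_d^{\perp} \subseteq \Hom(V^{\otimes d},W^{\otimes d})$. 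So the proposition reduces to showing that $I(W,V)_d^{\perp} = \Hom_{\mathcal{B}_d}(V^{\otimes d},W^{\otimes d})$.

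Next I would pin down the degree-$d$ part of the ideal. Writing $C_i^{V}=\id^{\otimes(i-1)}\otimes R_V\otimes\id^{\otimes(d-i-1)}$ (and similarly for $W$), a generator
$Y_1\otimes (XR_W-R_V X)\otimes Y_2$ at position $i=a+1$ of the two-sided ideal, where $Y_1\in\Hom(W^{\otimes a},V^{\otimes a})$, $Y_2\in\Hom(W^{\otimes b},V^{\otimes b})$ and $a+2+b=d$, can be rewritten, using that multiplication in the tensor algebra corresponds to tensoring of linear maps, as
\[
(Y_1\otimes X\otimes Y_2)\circ C_i^{W} - C_i^{V}\circ (Y_1\otimes X\otimes Y_2).
\]
As $Y_1,X,Y_2$ vary, the element $Y_1\otimes X\otimes Y_2$ sweeps out all of $\Hom(W^{\otimes d},V^{\otimes d})$, so
\[
I(W,V)_d = \operatorname{span}\bigl\{\,Y\circ C_i^{W} - C_i^{V}\circ Y \;:\; Y\in\Hom(W^{\otimes d},V^{\otimes d}),\ 1\le i\le d-1\,\bigr\}.
\]

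Finally I would unwind the annihilator condition. For $\phi\in\Hom(V^{\otimes d},W^{\otimes d})$, using cyclicity of the trace,
\[
\operatorname{tr}\bigl(\phi\circ(YC_i^{W}-C_i^{V}Y)\bigr)
= \operatorname{tr}\bigl((C_i^{W}\phi-\phi C_i^{V})\circ Y\bigr).
\]
Thus $\phi$ annihilates $I(W,V)_d$ iff $(C_i^{W}\phi-\phi C_i^{V})$ pairs to zero against every $Y\in\Hom(W^{\otimes d},V^{\otimes d})$ for each $i$, which by non-degeneracy of the trace pairing is equivalent to $C_i^{W}\phi=\phi C_i^{V}$ for all $i$. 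Since the $C_i$'s are exactly the images of the braid generators $T_i$ under $\rho_{d,V}$ and $\rho_{d,W}$, this says precisely that $\phi\in\Hom_{\mathcal{B}_d}(V^{\otimes d},W^{\otimes d})$. Naturality in $V$ and $W$ is immediate since the isomorphism is induced by the trace pairing. The one careful step, and thus the main thing to get right, is identifying the degree-$d$ piece of the two-sided ideal with the spanning set in terms of the $C_i$'s; the rest is a routine manipulation of the trace pairing.
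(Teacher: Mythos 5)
Your argument is correct and is essentially the same duality argument that Hong–Yacobi give for their Proposition 2.7 (which this paper cites without reproducing the proof): identify $T(W,V)_d^{*}$ with $\Hom(V^{\otimes d},W^{\otimes d})$ via the trace pairing, observe that passing to $A(W,V)_d^{*}$ amounts to passing to the annihilator of $I(W,V)_d$, rewrite the degree-$d$ piece of the two-sided ideal in terms of the braid generators $C_i$, and then unwind $\phi\perp I(W,V)_d$ to the intertwining condition $C_i^{W}\phi=\phi C_i^{V}$. One small phrasing caveat: "sweeps out all of $\Hom(W^{\otimes d},V^{\otimes d})$" should read "spans"; the simple tensors $Y_1\otimes X\otimes Y_2$ do not literally exhaust the Hom-space, but since $Y\mapsto YC_i^{W}-C_i^{V}Y$ is linear, passing to spans is exactly what you need, and that is indeed what you then use.
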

\begin{proof}
See Proposition 2.7 in \cite{HongYacobi}.
\end{proof}

By taking the dual of $\Delta_{V,W,U}$ we obtain a map 
\[m_{U,W,V}: S(W,V;d) \otimes S(U,W;d) \to S(U,V;d).\]
There is a natural map $m_{U,W,V}' : \textnormal{Hom}_{\mathcal{B}_d}(W^{\otimes d}, V^{\otimes d}) \otimes \textnormal{Hom}_{\mathcal{B}_d}(U^{\otimes d}, W^{\otimes d})  \to \textnormal{Hom}_{\mathcal{B}_d}(U^{\otimes d}, V^{\otimes d})$ that takes $f \otimes g \mapsto f \circ g$. The following Proposition shows they are the same map under the isomorphism in Proposition \ref{prop:homspaceschuralgebra}.

\begin{prop}\label{HY2.8}
Given three Yang-Baxter spaces $V, U, W$, the following diagram commutes:

\begin{center}
\begin{equation*}
\begin{tikzpicture} 
  \matrix (m) [matrix of math nodes,row sep=3em,column sep=5em,minimum width=2em]
  {
     S(W,V;d) \otimes S(U,W;d) & \textnormal{Hom}_{\mathcal{B}_d}(W^{\otimes d}, V^{\otimes d}) \otimes \textnormal{Hom}_{\mathcal{B}_d}(U^{\otimes d}, W^{\otimes d})  \\
     S(U,V;d) & \textnormal{Hom}_{\mathcal{B}_d}(U^{\otimes d}, V^{\otimes d}) \\};
  \path[-stealth]
    (m-1-1) edge node [left] {$m_{U,W,V}$} (m-2-1)
            edge node [above] {$\cong$} (m-1-2)
    (m-2-1.east|-m-2-2) edge node [below] {$\cong$} (m-2-2)
    (m-1-2) edge node [right] {$m_{U,W,V}'$} (m-2-2) ;
\end{tikzpicture}
\end{equation*}
\end{center}
\end{prop}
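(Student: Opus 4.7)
The plan is to unpack the explicit form of the isomorphism in Proposition \ref{prop:homspaceschuralgebra} and then reduce the commutativity of the square to one application of the coassociativity-type diagram of Proposition \ref{diagram} iterated $d$ times. Recall (from the Hong--Yacobi proof of Proposition \ref{prop:homspaceschuralgebra}) that the isomorphism $S(V,W;d)\iso\Hom_{\mathcal{B}_d}(V^{\otimes d},W^{\otimes d})$ sends $\alpha\in (A(W,V)_d)^*$ to
\[
\tilde\alpha := (\id_{W^{\otimes d}}\otimes\alpha)\circ\Delta_{V,W}^{(d)}\colon V^{\otimes d}\longrightarrow W^{\otimes d}\otimes A(W,V)_d\longrightarrow W^{\otimes d},
\]
where $\Delta_{V,W}^{(d)}$ is the evident $d$-fold iterate of $\Delta_{V,W}$.

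Given $f\in S(W,V;d)$ and $g\in S(U,W;d)$ with corresponding maps $\tilde f\colon W^{\otimes d}\to V^{\otimes d}$ and $\tilde g\colon U^{\otimes d}\to W^{\otimes d}$, I would first pull the scalar functional $g$ past $\Delta_{W,V}^{(d)}$ to write
\[
\tilde f\circ\tilde g = (\id\otimes f\otimes g)\circ(\Delta_{W,V}^{(d)}\otimes\id_{A(W,U)_d})\circ\Delta_{U,W}^{(d)}.
\]
This is the natural arena in which to invoke Proposition \ref{diagram}: after relabelling $V\mapsto U$, $U\mapsto V$ there, taking the $d$-th tensor iterate yields the identity
\[
(\Delta_{W,V}^{(d)}\otimes\id)\circ\Delta_{U,W}^{(d)} = (\id\otimes\Delta_{V,W,U})\circ\Delta_{U,V}^{(d)}
\]
as maps $U^{\otimes d}\to V^{\otimes d}\otimes A(V,W)_d\otimes A(W,U)_d$.

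Substituting gives $\tilde f\circ\tilde g = \bigl(\id\otimes\bigl((f\otimes g)\circ\Delta_{V,W,U}\bigr)\bigr)\circ\Delta_{U,V}^{(d)}$. Since $m_{U,W,V}$ is by definition the dual of $\Delta_{V,W,U}\colon A(V,U)\to A(V,W)\otimes A(W,U)$, the functional $(f\otimes g)\circ\Delta_{V,W,U}\in (A(V,U)_d)^*=S(U,V;d)$ is exactly $m_{U,W,V}(f\otimes g)$. Hence $\tilde f\circ\tilde g = \widetilde{m_{U,W,V}(f\otimes g)}$, which is the commutativity of the diagram.

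The only real obstacle is bookkeeping: one must be careful that the tensor factor containing the element of $A(W,U)_d$ sits on the far right throughout, so that $f$ and $g$ pair with the correct factors under the identification $(A\otimes B)^*\supset A^*\otimes B^*$, and that the relabelling of Proposition \ref{diagram} is done consistently. Once these conventions are fixed, the argument is a formal rearrangement as above and the proof is essentially Proposition \ref{diagram} plus the definitions.
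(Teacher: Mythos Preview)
Your argument is correct. The paper itself does not give a proof here; it simply cites \cite[Proposition 2.8]{HongYacobi}. What you have written is precisely the standard unwinding of that reference: identify $S(V,W;d)\cong\Hom_{\mathcal{B}_d}(V\td,W\td)$ via $\alpha\mapsto(\id\otimes\alpha)\circ\Delta^{(d)}$, commute the scalar $g$ past $\Delta_{W,V}^{(d)}$, and then use the $d$-fold iterate of Proposition \ref{diagram} together with the fact that $\Delta_{V,W,U}$ is an algebra homomorphism (so the iterate is compatible with multiplication in the $A$-factors). The only point worth flagging explicitly, which you allude to in your last paragraph, is that the $d$-fold version of Proposition \ref{diagram} really does require $\Delta_{V,W,U}$ to be multiplicative; this is immediate from its definition on generators but should be stated rather than left implicit. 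Otherwise your proof is complete and is exactly what the cited reference does.
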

\begin{proof}
See Proposition 2.8 in \cite{HongYacobi}.  
\end{proof}

\begin{rem}
If $W=V$, the quadratic relation \eqref{RTT} becomes the RTT relation due to
Faddeev, Reshetikhin and Taktajan. The algebra $A(V,V)$ is then
just the algebra denoted by $A_{R_V}$ in \cite{FRT}.
\end{rem}
We record two properties of $A(V,V)$ which are standard results in the theory of quantum matrices; their proofs are nothing more than simple computations.

\begin{enumerate}
\item $A(V,V)$ is a bialgebra with comultiplication $\Delta_{V,V,V} (x_{ij}) = \sum_k
x_{ik} \otimes x_{kj}$ and counit $ e(x_{ij}) = e_{A(V,V)}(x_{ij}) = \delta_{ij}$.
\item $V$ is an $A(V,V)$-comodule with coaction given by $\Delta_{V,V}(v_i) = \sum_j v_j \otimes x_{ji}$.
\end{enumerate}

We note that one of the two diagrams that need to commute for $\Delta_{V,V}$
to be a coaction is the diagram in Proposition \ref{diagram} for $V =W=U$.
Therefore we  can think of the map $\Delta_{V,W}: V \to W \otimes A(W,V)$ as
a generalization of the coaction.

Since the comultiplication is degree preserving (i.e., $\Delta_{V,V,V}$ maps $A(V,V)_d$ to
$A(V,V)_d \otimes A(V,V)_d$), the map $m_{V,V,V}$ makes $S(V,V;d)$ into an
algebra. The associativity of $m_{V,V,V}$ is equivalent to the coassociativity of
$\Delta_{V,V,V}$. The unit of $S(V,V;d)$ is given by the counit $e$ of $A(V,V)$.

\subsection{Quantum matrix spaces and $e$-Hecke pairs}

Let $V_n$ denote an $n$-dimensional vector space. Let
$R_n$ be the $R$-matrix of $U_q(\mathfrak{gl}_n)$ for the
defining representation, namely
\begin{equation}\label{def:Rmatrix} R_n (v_i \otimes v_j) = \begin{cases}
      v_j \otimes v_i  & \text{if} \,\, i<j \\
      qv_i \otimes v_j & \text{if} \,\, i=j \\
    (q-q^{-1})v_i \otimes v_j + v_j \otimes v_i & \text{if} \,\, i>j.
   \end{cases} 
 \end{equation}
It is well known that $(V_n, R_n)$ form a Yang-Baxter space. Define
$A_q(n,n):=A (V_n, V_n)$.

The space $A_q(n,n)$ is the algebra of quantum $n \times n$ matrices (see
\cite{Takeuchi} and \cite{FRT}). It is a coquasitriangular bialgebra
generated by elements $x_{ij}, 1 \leq i,j \leq n$ subject to the following
RTT relations:
\begin{equation} \label{RTT2}
\sum_{k,l} (R_n)^{pq}_{kl} x_{ki} x_{lj} = \sum_{k,l} (R_n)^{kl}_{ij}
x_{pk}x_{ql}. 
\end{equation}

We now present some standard properties of $A_q(n,n)$, see for example \cite[Chapter 7]{Lambe}. We begin by reminding the reader about the coalgebra structure on $A_q(n,n)$. The comultiplication and counit are given on generators by
\[ \Delta(x_{ij}) = \sum_k x_{ik} \otimes x_{kj}, \,\,\, \epsilon (x_{ij}) =
\delta_{ij}. \]
The vector space $V_n$ is an $A_q(n,n)$-comodule via the coaction $v_i \mapsto \sum_j v_j
\otimes x_{ji}$.

The coalgebra structure on $A_q(n,n)$ allows one to endow the tensor product $V \otimes W$ of two $A_q(n,n)$-comodules with the structure of an $A_q(n,n)$-comodule. Therefore the category of finite dimensional $A_q(n,n)$-comodules is a monoidal category. The unit is the trivial comodule $k$ with the coaction $1 \in k \mapsto 1 \otimes 1 \in k \otimes A_q(n,n)$. There are standard isomorphisms $l_V: k \otimes V \to V$ and $r_V: V \otimes k \to V$.

The bialgebra $A_q(n,n)$ is coquasitriangular. This means that there is a map $\mathcal{R}:
A_q(n,n) \otimes A_q(n,n) \to k$ that is invertible in the
convolution algebra, satisfying the following conditions:
\begin{equation} \label{equation:cqt}
\begin{split}
&\mathcal{R}(a b, c) = \mathcal{R}(a, c_{(1)}) \mathcal{R}(b, c_{(2)}) \\
&\mathcal{R}(a, bc) = \mathcal{R}(a_{(1)}, b) \mathcal{R}(a_{(2)}, c) \\
&b_{(1)} a_{(1)}\mathcal{R}(a_{(2)},b_{(2)}) = \mathcal{R}(a_{(1)}, b_{(1)})
b_{(2)} a_{(2)}
\end{split}
\end{equation}
for all $a,b,c \in A_q(n,n)$. In the above formula we use Sweedler notation, namely we denote $\Delta(a) = a_{(1)}\otimes a_{(2)}$. The map $\mathcal{R}$ is given on generators $x_{ij}$
by the formula
\begin{equation}\label{eq:universalR}
 \mathcal{R} (x_{ij} \otimes x_{kl}) = (R_n)^{ik}_{lj}.
 \end{equation}
The values of $\mathcal{R}$ on higher order terms is given by repeated
applications of the first two equalities in equation \eqref{equation:cqt}. 

The existence of $\mathcal{R}$ implies that for every
$A_q(n,n)$-comodules $V, W$ there is an $A_q(n,n)$-comodule isomorphism $R_{V,W}: V \otimes W \to
W \otimes V$ given by the formula
\begin{equation}\label{eq:Rmatrixcomodule} R_{V,W} := (1 \otimes 1 \otimes \mathcal{R})(1 \otimes \tau \otimes 1)(\Delta_{W} \otimes \Delta_{V})\tau.
\end{equation}
This morphism makes the category of finite dimensional $A_q(n,n)$-comodules into a strict braided monoidal category. A strict braided monoidal category $\mathcal{C}$ is a monoidal category with braiding isomorphisms $\gamma_{V, W} : V \otimes W \to W \otimes V$ that satisfy 
\begin{equation}\label{eq:braidedmonoidalcategory}
\begin{split}
&\gamma_{V \otimes  W, U} =  (\gamma_{V,U}\otimes 1) (1 \otimes \gamma_{W, U}) \\
&\gamma_{V, W \otimes U} = (1 \otimes \gamma_{V, U}) (\gamma_{V,W}\otimes 1) \\
&\tilde{r}_V \gamma_{I, V} = \tilde{l}_V, \,\,\, \tilde{r}_V \gamma_{V, I} = \tilde{l}_V 
\end{split}
\end{equation}
where $I$ is the identity object in the monoidal category and $\tilde{r}_V, \tilde{l}_V$ are the identity constraints in $\mathcal{C}$. 

\begin{prop}\label{prop:Aqnnbraidedmonoidalcategory}
The category of finite dimensional $A_q(n,n)$-comodules is a braided monoidal category with braiding isomorphisms given by $\gamma_{V,W} = R_{V,W}$.
\end{prop}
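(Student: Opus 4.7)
The plan is to verify that the maps $R_{V,W}$ defined by \eqref{eq:Rmatrixcomodule} satisfy each of the axioms in \eqref{eq:braidedmonoidalcategory} together with the property of being comodule morphisms and being invertible, all by reducing to the three coquasitriangularity axioms in \eqref{equation:cqt}. Since $A_q(n,n)$ is already known to be a bialgebra, the monoidal structure on its comodules is automatic; the content is entirely in the braiding.

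First, I would check that $R_{V,W}$ is a morphism of $A_q(n,n)$-comodules. Writing the coactions of $V$ and $W$ in Sweedler notation $v \mapsto v_{(0)} \otimes v_{(1)}$ and $w \mapsto w_{(0)} \otimes w_{(1)}$, the coaction on $V \otimes W$ sends $v \otimes w$ to $v_{(0)} \otimes w_{(0)} \otimes v_{(1)} w_{(1)}$, and the coaction on $W \otimes V$ sends $w \otimes v$ to $w_{(0)} \otimes v_{(0)} \otimes w_{(1)} v_{(1)}$. Chasing the definition of $R_{V,W}$ on $v \otimes w$ gives $\mathcal{R}(v_{(1)}, w_{(1)}) w_{(0)} \otimes v_{(0)}$, and comparing coactions after applying $R_{V,W}$ reduces exactly to the third axiom of \eqref{equation:cqt} $a_{(1)} b_{(1)} \mathcal{R}(b_{(2)}, a_{(2)}) = \mathcal{R}(b_{(1)}, a_{(1)}) a_{(2)} b_{(2)}$ applied to $a = v_{(1)}$, $b = w_{(1)}$.

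Next, I would verify the two hexagon axioms. For $\gamma_{V\otimes W, U}$, writing the composition $(\gamma_{V,U} \otimes 1)(1 \otimes \gamma_{W,U})$ on an element $v\otimes w\otimes u$ and applying the definition of $R$ via $\mathcal{R}$ produces a factor $\mathcal{R}(v_{(1)}, u_{(1)})\mathcal{R}(w_{(1)}, u_{(2)})$, which by the first axiom $\mathcal{R}(ab, c) = \mathcal{R}(a, c_{(1)})\mathcal{R}(b, c_{(2)})$ equals $\mathcal{R}(v_{(1)} w_{(1)}, u_{(1)})$; this is precisely what appears on the left-hand side $R_{V\otimes W, U}(v\otimes w \otimes u)$ because the coaction on $V\otimes W$ is the product coaction. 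The second hexagon follows symmetrically from the second axiom $\mathcal{R}(a, bc) = \mathcal{R}(a_{(1)}, b) \mathcal{R}(a_{(2)}, c)$. The unit conditions $R_{V, k} = \tilde l_V^{-1} \tilde r_V$ and $R_{k, V} = \tilde l_V \tilde r_V^{-1}$ are immediate from $\mathcal{R}(a, 1) = \mathcal{R}(1, a) = \varepsilon(a)$, a standard consequence of the axioms applied to $b = 1$.

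Finally, the invertibility of each $R_{V,W}$ follows from the convolution invertibility of $\mathcal{R}$: if $\bar{\mathcal{R}}$ denotes its convolution inverse, then the map $(1 \otimes 1 \otimes \bar{\mathcal{R}})(1 \otimes \tau \otimes 1)(\Delta_V \otimes \Delta_W)\tau$ from $W \otimes V$ to $V \otimes W$ is a two-sided inverse to $R_{V,W}$, by a direct Sweedler computation that collapses $\mathcal{R} \star \bar{\mathcal{R}} = \varepsilon \otimes \varepsilon$. I expect the main technical obstacle to be simply bookkeeping of Sweedler indices, since each axiom translates essentially tautologically once the correct identification of the coaction on tensor products is used; no deep input beyond \eqref{equation:cqt} is needed.
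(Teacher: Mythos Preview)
Your proposal is correct and follows the standard textbook verification that the comodule category of any coquasitriangular bialgebra is braided. The paper itself does not supply a proof of this proposition: it is stated as a known consequence of the existence of $\mathcal{R}$ satisfying \eqref{equation:cqt}, with the reader implicitly referred to the general theory (e.g., \cite[Chapter 7]{Lambe}). So there is no paper proof to compare against; your argument is exactly the kind of verification one would give if asked to unpack the statement, and each step (comodule-morphism property from the third axiom, hexagons from the first two, unit condition from $\mathcal{R}(-,1)=\mathcal{R}(1,-)=\epsilon$, invertibility from convolution invertibility) is the expected one. One small caution: be careful to match the precise form of the third axiom in \eqref{equation:cqt} when doing the Sweedler bookkeeping, since conventions for coquasitriangular structures vary and the paper's version reads $b_{(1)}a_{(1)}\mathcal{R}(a_{(2)},b_{(2)})=\mathcal{R}(a_{(1)},b_{(1)})b_{(2)}a_{(2)}$.
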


When $W= V$, the map $R_{V}:=R_{V,V}$ satisfies the Yang-Baxter equation. This makes makes the pair $(V,R_V)$ a Yang-Baxter space for every $A_q(n,n)$-comodule $V$. 

\begin{defn} \label{eHeckepair}
An $e$-$\it{Hecke}$ $\it{pair}$ $V$ is an $A_q(n,n)$-comodule for some $n\geq 1$ such that the image of the coaction $\Delta_V : V \to V \otimes A_q(n,n)$ lies in $V \otimes A_q(n,n)_e$.
\end{defn}

\begin{rem}\label{lemma:eHeckesubquotient}
Equivalently, an $e$-Hecke pair is a (finite dimensional) module over the $q$-Schur algebra $S_q(n;e)$, or equivalently a degree $e$ representation of $U_q(\mathfrak{gl}_n)$.
Yet another way to understand the $e$-Hecke pairs is to note the fact that they are direct sums of subquotients of $V_n^{\otimes e}$ where $V_n$ is either the $n$-dimensional defining comodule for $A_q(n,n)$ or the defining $U_q(\mathfrak{gl}_n)$ module.
\end{rem}

We explain the term ``$e$-Hecke pair''. Let $V$ be an $e$-Hecke pair. First, we call it a ``pair'' because we think of $V$ as the pair $(V, R_V)$. 
To explain the word ``Hecke'', let us start with the case $e=1$. 
If $V$ is indecomposable, then $V$ has to be the defining comodule $V_n$ 
and the action of the braid group $\mathcal{B}_d$ on $V_n^{\otimes d}$ factors through the action of the Hecke algebra $\cH_d$.  This is not the case for general $e$. Instead, the action of $\mathcal{B}_d$ on $V\td$ factors through a different deformation of the symmetric group $S_d$, which is realized as a subalgebra of $\cH_{de}$ as follows. Let $w_i$ be the element in $S_{de}$ such that
\begin{equation}\label{w} 
	w_i(j)=
	\begin{cases} 
      j+e   & \ \ \ e(i-1)<j\leq ei \\
      j-e & \hfill ei<j\leq e(i+1) \\
      j		& \hfill \text{otherwise}.
	\end{cases}
	\end{equation}
\begin{defn}\label{def:qeHecke}
The \textit{$e$-Hecke algebra of rank $d$}, denoted by $\cH_{d,e}$, is the subalgebra of $\cH_{de}$ generated by $T_{w_1},\cdots T_{w_{d-1}}$, where $w_i$ are as in equation \eqref{w}.
\end{defn} 

One sees from the definition that the action of $\mathcal{B}_d$ on $(V_n\te)\td$ factors through $\cH_{d,e}$.
Since an indecomposable $e$-Hecke pair $V$ is a subquotient of $V_n^{\otimes e}$ as an $A_q(n,n)$-comodule (see Remark \ref{lemma:eHeckesubquotient}), the $\mathcal B_d$-module $V\td$ is a $\mathcal B_d$-subquotient of $(V_n\te)\td$, hence a $\cH_{d,e}$-subquotient.
We also have that 
\begin{equation*}\label{eheckealgebra}
\Hom_{\cH_{d,e}}(V\td, W\td)=\Hom_{\mathcal{B}_d}(V\td, W\td)\cong(\Hom(V,W)\td)^{\mathcal{B}_d}.
\end{equation*}
It follows that the $d$-th tensor power of an indecomposable $e$-Hecke pair is a module over the $e$-Hecke algebra $\cH_{d,e}$. Note that the last sentence is false if we don't require the $e$-Hecke pair to be indecomposable, for both $e=1$ and general $e$. 

\begin{rem}
Note that the dimension of $\mathcal{H}_{d,e}$ is in general greater than that of $kS_d$. 
When $d=2$, we can relate the dimension of $\mathcal H_{d,e}$ to the eigenvalues of a certain $R$-matrix, where we can already see the difficulty of computing the dimension.
The Schur-Weyl duality identifies $\mathcal{H}_{2e}$ with $\operatorname{End}_{U_q(\mathfrak{gl}_{2e})}(V_{2e}^{\otimes 2e})$; under this identification, the generating element $T_w\in\mathcal H_{2,e}$ corresponds to the map $R_{V_{2e}^{\otimes e}}$. Thus, the dimension of $\mathcal{H}_{2,e}$ is equal to the degree of the minimal polynomial of $R_{V_{2e}^{\otimes e}}$. 
Since any $R$-matrix is diagonalizable for $q$ generic, the degree 
is equal to the number of different eigenvalues of the $R$-matrix $R_{V_{2e}^{\otimes e}}$
. For example, when $e=2$ and $q \neq 1$ is generic, the $R$-matrix $R_{V_4^{\otimes 2}}$ has $7$ distinct eigenvalues as seen from the table in the last Section of \cite{HongYacobi}. Therefore, the dimension of $\mathcal{H}_{2,2}$ is $7$. 
For general $e$, we do not know the number of eigenvalues for the $R$-matrices involved. But the argument above gives an upper bound $2(2e^2+1)$ for the dimension of $\mathcal{H}_{2,e}$. 
\end{rem}

\begin{rem}
The $e$-Hecke algebra arises when one considers the wreath product of finite groups. Given two groups $G$ and $H\subseteq \mathcal{B}_d$, one can define a wreath product mimicking the usual construction where $H\subseteq S_d$: Let the wreath product $G\wr H$ be the semidirect product $G^{\times d} \rtimes H$, where the action of $H$ is the braid group action permuting components.
If $G=\mathcal{B}_e$ and $H=\mathcal{B}_d$, then the wreath product $\mathcal{B}_e\wr \mathcal{B}_d$ is a subgroup of the larger braid group $\mathcal{B}_{de}$ generated by $T_1,\cdots,T_{e-1},T_{e+1},\cdots,T_{2e-1},\cdots, T_{(d-1)e+1},\cdots,T_{de-1}$ and $w_1,\cdots,w_{d-1}$, where $T_i$ are the standard generators for $\mathcal B_{de}$ and $w_i\in\mathcal B_{de}$ are (unique) shortest lifts of $w_i\in S_{de}$ in \eqref{w}.
Now we replace the braid groups by Hecke algebras in constructing the (internal) wreath product. The group $\mathcal{B}_e$ is replaced by the Hecke algebra $\cH_e$, and the product $\mathcal{B}_e^{\times d}$ is replaced by $\cH_e\td$. This latter algebra is a subalgebra of $\cH_{de}$ generated by $T_1,\cdots, T_{e-1},T_{e+1},\cdots,T_{2e-1},\cdots,T_{(d-1)e+1},\cdots,T_{de-1}$. The $e$-Hecke algebra $\cH_{d,e}$ naturally acts on this; the generator $T_{w_i}$ acts as the multiplication in $\cH_{de}$. Then, the subalgebra in $\mathcal{H}_{de}$ generated by the above $\cH_e\td$ and our $e$-Hecke algebra $\cH_{d,e}$, denoted by $\mathcal H_e\wr \mathcal H_{d,e}$, can be thought of as an analog to $\mathcal{B}_e\wr \mathcal{B}_d$ or $S_e\wr S_d$.

\end{rem}

The last equality of equation \eqref{equation:cqt} implies that $R_V$ is an
$A_q(n,n)$-comodule homomorphism.
Given an $A_q(n,n)$-comodule $V$, write the coaction map as 
\[ \Delta_V: v_i \mapsto \sum_j v_j \otimes t^V_{ji} \]
for some $t^V_{ji} \in  A_q(n,n)$. The equation above serves as the definition for $t^V_{ji}$. 

\begin{lem} \label{RWcomodulehomomorphismequivalence}
The equation 
\[ \sum_{k,l} R^{pq}_{V,kl} t^V_{ki} t^V_{lj} = \sum_{k,l} R^{kl}_{V,ij} t^V_{pk}t^V_{ql}. \]
is equivalent to the fact that $R_V$ is an $A_q(n,n)$-comodule homomorphism.
\end{lem}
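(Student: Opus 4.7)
The plan is to unpack both sides of the claimed equivalence by writing everything in the chosen basis of $V$ and comparing coefficients of the resulting expressions in $V\otimes V\otimes A_q(n,n)$.

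First I would recall that since $V$ is an $A_q(n,n)$-comodule, so is $V\otimes V$, with coaction given (using the bialgebra multiplication $m$ on $A_q(n,n)$) by
\[
\Delta_{V\otimes V} = (\Id_{V\otimes V}\otimes m)\circ(\Id_V\otimes \tau\otimes \Id_{A_q(n,n)})\circ(\Delta_V\otimes \Delta_V).
\]
On the chosen basis this reads $\Delta_{V\otimes V}(v_i\otimes v_j)=\sum_{p,q}(v_p\otimes v_q)\otimes t^V_{pi}t^V_{qj}$. The statement that $R_V$ is a comodule homomorphism is by definition the commutativity $\Delta_{V\otimes V}\circ R_V=(R_V\otimes \Id_{A_q(n,n)})\circ\Delta_{V\otimes V}$, so the lemma reduces to verifying that this commutation, evaluated on each basis vector $v_i\otimes v_j$, is coefficient-wise identical to the displayed identity in $A_q(n,n)$.

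Second, I would compute the two sides separately on $v_i\otimes v_j$. Using $R_V(v_i\otimes v_j)=\sum_{k,l}R^{kl}_{V,ij}\,v_k\otimes v_l$ together with the formula for $\Delta_{V\otimes V}$, the left-hand side becomes
\[
\Delta_{V\otimes V}(R_V(v_i\otimes v_j))=\sum_{p,q}(v_p\otimes v_q)\otimes\Bigl(\sum_{k,l}R^{kl}_{V,ij}\,t^V_{pk}t^V_{ql}\Bigr),
\]
while the right-hand side, after first applying $\Delta_{V\otimes V}$ and then $R_V\otimes\Id$ and finally relabeling summation indices, becomes
\[
(R_V\otimes\Id)(\Delta_{V\otimes V}(v_i\otimes v_j))=\sum_{p,q}(v_p\otimes v_q)\otimes\Bigl(\sum_{k,l}R^{pq}_{V,kl}\,t^V_{ki}t^V_{lj}\Bigr).
\]
Since the tensors $v_p\otimes v_q$ are linearly independent in $V\otimes V$, these two elements of $V\otimes V\otimes A_q(n,n)$ coincide if and only if the $A_q(n,n)$-valued coefficients agree for each pair $(p,q)$, which is precisely the displayed identity of the lemma. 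Conversely, if the identity holds for all $i,j,p,q$ then the commutation of $\Delta_{V\otimes V}$ with $R_V$ holds on the basis, hence on all of $V\otimes V$.

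I do not anticipate any real obstacle; the proof is a direct computation. The only place to be careful is tracking the swap $\tau$ in the tensor-product coaction and keeping the positions of the indices on $R^{kl}_{V,ij}$ consistent with the convention fixed earlier, so that the two summations end up being compared with the correct pairings of upper/lower and inner/outer indices.
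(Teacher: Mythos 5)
Your proof is correct and follows essentially the same approach as the paper's: the paper simply states that $R_V$ is a comodule homomorphism iff $\Delta_{V\otimes V}R_V = (R_V\otimes 1)\Delta_{V\otimes V}$ and that applying both sides to $v_i\otimes v_j$ and comparing $A_q(n,n)$-coefficients of $v_p\otimes v_q$ yields the identity, which is exactly the computation you spell out in detail.
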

\begin{proof}
$R_V$ is an $A_q(n,n)$-comodule homomorphism if and only if 
\[ \Delta_{V \otimes V} R_V = (R_V  \otimes 1) \Delta_{V \otimes V}. \]
Applying both sides to $v_i \otimes v_j$ and picking out the $A_q(n,n)$-coefficients of $v_p \otimes v_q$ produces the 
the desired equation. 

\end{proof}

\begin{lem} \label{RTTwelldefined}
The relation 
\begin{equation}\label{eqRTTrel}
\sum_{k,l} R^{pq}_{V,kl} t^V_{ki} t^V_{lj} = \sum_{k,l} R^{kl}_{V,ij}
t^V_{pk}t^V_{ql}. 
\end{equation}
holds in $A_q(n,n)$. 
\end{lem}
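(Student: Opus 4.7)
The plan is to reduce the claim to a comodule-morphism statement via Lemma \ref{RWcomodulehomomorphismequivalence}, and then to invoke the coquasitriangularity of $A_q(n,n)$.

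More precisely, Lemma \ref{RWcomodulehomomorphismequivalence} says that the relation \eqref{eqRTTrel} holds in $A_q(n,n)$ if and only if $R_V : V\otimes V \to V\otimes V$ is an $A_q(n,n)$-comodule homomorphism (apply $\Delta_{V\otimes V}R_V=(R_V\otimes 1)\Delta_{V\otimes V}$ to $v_i\otimes v_j$ and read off the coefficient of $v_p\otimes v_q$). Thus it suffices to prove that $R_V$ is a morphism of comodules.

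For this, I would use that $R_V=R_{V,V}$ is built from the coaction and the universal form $\mathcal R$ via formula \eqref{eq:Rmatrixcomodule}. The relevant ingredient is the third identity in \eqref{equation:cqt}, $b_{(1)}a_{(1)}\mathcal R(a_{(2)},b_{(2)}) = \mathcal R(a_{(1)},b_{(1)})b_{(2)}a_{(2)}$, which is exactly the cocycle compatibility guaranteeing that the braiding $R_{V,W}$ defined from $\mathcal R$ intertwines the coactions on $V\otimes W$ and $W\otimes V$. Taking $W=V$, one concludes that $R_V$ is a comodule map; this is the same statement that underlies Proposition \ref{prop:Aqnnbraidedmonoidalcategory}, where $\gamma_{V,V}=R_V$ is part of the braided monoidal structure on $A_q(n,n)$-comodules and is therefore a morphism in this category.

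Combining the two steps yields the result. There is no genuine obstacle: the only thing one has to be careful about is to confirm that the $R_V$ appearing in the statement (defined via \eqref{eq:Rmatrixcomodule}, applied to the comodule $V$) is the same $R_V$ featured in Lemma \ref{RWcomodulehomomorphismequivalence}; both are constructed from the universal $\mathcal R$ of $A_q(n,n)$, so they agree by definition and the proof is complete.
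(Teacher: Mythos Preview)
Your proposal is correct and follows exactly the paper's own argument: reduce \eqref{eqRTTrel} to the statement that $R_V$ is an $A_q(n,n)$-comodule homomorphism via Lemma~\ref{RWcomodulehomomorphismequivalence}, and then invoke coquasitriangularity (the third identity in \eqref{equation:cqt}) to conclude. The paper states this in one line, citing the fact (noted just before Lemma~\ref{RWcomodulehomomorphismequivalence}) that the last equality in \eqref{equation:cqt} makes $R_V$ a comodule map.
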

\begin{proof}
This follows from Lemma $\ref{RWcomodulehomomorphismequivalence}$ and the fact that 
$R_V$ is an $A_q(n,n)$-comodule homomorphism. 
\end{proof}

Given an $A_q(n,n)$-comodule $V$, we denote $A(V,V)$ by $A_q(V,V)$ and $S(V,V;d)$ by $S_q(V,V;d)$.
If we define $S_q(n;d) := S_q(V_n,V_n;d)$, then $S_q(n;d)$ is the $q$-Schur algebra due to
Dipper and James \cite{DipperJames}.

\begin{prop} \label{semisimple}
Suppose $q$ is generic or $1$ and $\operatorname{char} k = 0$. Then the category of finite dimensional $A_q(n,n)$-comodules is semisimple.
\end{prop}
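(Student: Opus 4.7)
The plan is to reduce the claim to the semisimplicity of each generalized $q$-Schur algebra $S_q(n;e)$ by using the grading on $A_q(n,n)$, and then invoke known results on $S_q(n;e)$ in the two cases.

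First I would use that $A_q(n,n)=\bigoplus_{e\geq 0}A_q(n,n)_e$ is a graded coalgebra: the comultiplication $\Delta(x_{ij})=\sum_k x_{ik}\otimes x_{kj}$ preserves the grading, so $A_q(n,n)_e$ is a subcoalgebra for every $e$. A standard fact about comodules over a graded coalgebra then gives, for each finite dimensional $A_q(n,n)$-comodule $V$, a canonical decomposition $V=\bigoplus_{e\geq 0}V_e$, where the coaction on $V_e$ factors through $V_e\otimes A_q(n,n)_e$. Thus $V_e$ is precisely an $e$-Hecke pair in the sense of Definition \ref{eHeckepair}, and the decomposition is a direct sum in the category of $A_q(n,n)$-comodules. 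This reduces semisimplicity of the whole category to semisimplicity of the full subcategory of $e$-Hecke pairs for each fixed $e$.

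Next I would identify the category of $e$-Hecke pairs with the category of finite dimensional modules over the $q$-Schur algebra $S_q(n;e)=S_q(V_n,V_n;e)$ (cf.\ Remark \ref{lemma:eHeckesubquotient}); this is the dual equivalence between comodules over the finite dimensional coalgebra $A_q(n,n)_e$ and modules over its dual algebra $S_q(n;e)$. It therefore suffices to prove that $S_q(n;e)$ is a semisimple algebra under the stated hypotheses. For $q=1$ and $\operatorname{char}k=0$, $S_q(n;e)$ is the classical Schur algebra $S(n;e)$, whose semisimplicity is equivalent to Weyl's complete reducibility theorem for polynomial $\GL_n$-representations in characteristic zero. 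For $q$ generic, the semisimplicity of $S_q(n;e)$ is a classical result (e.g.\ via the quantized Schur--Weyl duality combined with the semisimplicity of $\mathcal H_e$ for generic $q$, or equivalently via Lusztig's complete reducibility for finite dimensional type $1$ representations of $U_q(\mathfrak{gl}_n)$ at generic parameter).

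The main (and really only) obstacle is the generic $q$ case, since it requires that the Hecke algebra $\mathcal H_e$ be semisimple at a generic parameter; this is where the assumption that $q$ is not a root of unity enters crucially. Assuming this, Schur--Weyl duality between $\mathcal H_e$ and $S_q(n;e)$ acting on $V_n^{\otimes e}$ transfers semisimplicity from the Hecke side to the Schur side, completing the argument.
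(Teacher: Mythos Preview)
Your argument is correct. The paper itself does not give a self-contained proof: it simply refers to Theorem~11.4.4 of Parshall--Wang, where the analogous statement is established for $\GL_q(n)$, and remarks that the same method carries over to $A_q(n,n)$. Your route is genuinely different and more internal to the setup of the paper: you exploit the coalgebra grading $A_q(n,n)=\bigoplus_e A_q(n,n)_e$ to decompose any finite dimensional comodule into homogeneous pieces, identify the degree~$e$ piece with a module over the finite dimensional algebra $S_q(n;e)=A_q(n,n)_e^*$, and then deduce semisimplicity of $S_q(n;e)$ from that of $\mathcal H_e$ via the double centralizer identification $S_q(n;e)\cong\End_{\mathcal H_e}(V_n^{\otimes e})$. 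This has the advantage of using only objects already introduced in the paper and of making explicit where the hypothesis on $q$ enters (the semisimplicity of $\mathcal H_e$ requires $[e]_q!\neq 0$). The Parshall--Wang argument, by contrast, works directly at the level of the comodule category without passing through Schur--Weyl duality.
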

\begin{proof}
See Theorem 11.4.4 in Parshall and Wang \cite{PW} where our Proposition is proved for $\GL_q(n)$. The same result for $A_q(n,n)$ follows in a similar way.

\end{proof}

\begin{prop} \label{prop:subcomodule}
Suppose $q$ is generic or $1$ and $\operatorname{char} k = 0$. Any irreducible $A_q(n,n)_e$-comodule $V$ is a direct summand of $V_n^{\otimes e}$.
\end{prop}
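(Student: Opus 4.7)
The strategy is to exploit the cosemisimplicity of Proposition \ref{semisimple} together with the fact that $A_q(n,n)_e$ is, essentially by definition, the coalgebra of matrix coefficients of $V_n^{\otimes e}$.

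First I would observe that since the comultiplication of $A_q(n,n)$ is degree-preserving, the subspace $A_q(n,n)_e$ is a finite-dimensional subcoalgebra of $A_q(n,n)$, and hence the category of $A_q(n,n)_e$-comodules embeds fully faithfully in the category of $A_q(n,n)$-comodules. In particular, by Proposition \ref{semisimple}, the category of $A_q(n,n)_e$-comodules is semisimple, so every $A_q(n,n)_e$-comodule decomposes as a direct sum of simples.

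Next I would invoke the standard Peter-Weyl style decomposition of a cosemisimple coalgebra: writing $\{W_\alpha\}$ for a set of representatives of isomorphism classes of simple $A_q(n,n)_e$-comodules, one has an isomorphism of $A_q(n,n)_e$-bicomodules
\[ A_q(n,n)_e \cong \bigoplus_\alpha W_\alpha \otimes W_\alpha^*. \]
By semisimplicity, $V_n^{\otimes e} = \bigoplus_\alpha W_\alpha \otimes M_\alpha$ for some multiplicity vector spaces $M_\alpha$, and the image of the coaction map gives that the span of the matrix coefficients of $V_n^{\otimes e}$ inside $A_q(n,n)_e$ equals $\bigoplus_{\alpha:\, M_\alpha\ne 0} W_\alpha\otimes W_\alpha^*$.

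The remaining point, and really the whole content of the proposition, is to show that this span exhausts $A_q(n,n)_e$, i.e.\ $M_\alpha\ne 0$ for every $\alpha$. This is immediate from the very construction of $A_q(n,n)$: the generators $x_{ij}$ are by definition the matrix coefficients of the coaction on $V_n$, so the degree-$e$ monomials in the $x_{ij}$ are precisely the matrix coefficients of the coaction on $V_n^{\otimes e}$, and these span $A_q(n,n)_e$. Comparing with the decomposition above forces $M_\alpha\ne 0$ for every $\alpha$, and hence every simple $W_\alpha$ occurs as a direct summand of $V_n^{\otimes e}$.

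I expect no essential obstacle: the only mild subtlety is justifying the bicomodule Peter-Weyl decomposition in Step 2, but this is a standard consequence of cosemisimplicity that one can either cite or derive in a line using that a cosemisimple coalgebra is a direct sum of its simple subcoalgebras of the form $W\otimes W^*$. An alternative route is to appeal to quantum Schur-Weyl duality and write the decomposition $V_n^{\otimes e}\cong\bigoplus_\lambda L_\lambda\otimes S^\lambda$ indexed by partitions $\lambda\vdash e$ with $\ell(\lambda)\le n$, but this relies on more machinery than the bare cosemisimplicity argument above.
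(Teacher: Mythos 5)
Your proof is correct, but it takes a genuinely different route from the paper's. The paper's proof is a two-line citation: Remark~\ref{lemma:eHeckesubquotient} (which records, via the identification of $A_q(n,n)_e$-comodules with $S_q(n;e)$-modules, that every $e$-Hecke pair is a direct sum of subquotients of $V_n^{\otimes e}$) yields that the irreducible $V$ is a subquotient of $V_n^{\otimes e}$, and Proposition~\ref{semisimple} then upgrades ``subquotient'' to ``direct summand.'' You instead give a self-contained coalgebra-theoretic argument: the Peter--Weyl decomposition $A_q(n,n)_e\cong\bigoplus_\alpha W_\alpha\otimes W_\alpha^*$ of the finite-dimensional subcoalgebra, combined with the observation that the degree-$e$ monomials in the $x_{ij}$ are exactly the matrix coefficients of $V_n^{\otimes e}$ and span all of $A_q(n,n)_e$, forces every simple to occur in $V_n^{\otimes e}$. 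Your argument is longer but more informative, since it effectively proves the relevant case of Remark~\ref{lemma:eHeckesubquotient} rather than citing it; the paper's is shorter but outsources the content to that remark. One small point worth flagging: the decomposition $A_q(n,n)_e\cong\bigoplus_\alpha W_\alpha\otimes W_\alpha^*$ tacitly assumes $\operatorname{End}(W_\alpha)=k$; if $k$ is not a splitting field one should write the Peter--Weyl decomposition with the division algebras $\operatorname{End}(W_\alpha)$, but the conclusion that every simple occurs in $V_n^{\otimes e}$ goes through unchanged.
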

\begin{proof}
This follows from Proposition \ref{semisimple} and Remark \ref{lemma:eHeckesubquotient}.
\end{proof}

\section{Quantum polynomial functors}\label{sec:def}

\subsection{Definition}\label{subsection:definition}

In this section we propose a different definition of quantum polynomial functors that generalizes the definition in \cite{HongYacobi}. Our category of quantum polynomial functors enjoys many of the properties presented in \cite{HongYacobi}, and additionally, it has a composition. 
Let $d,e$ be non-negative integers.

Let us define the quantum divided power category
$\Gamma^d_{q,e} \mathcal{V}$. Its objects are all $e$-Hecke pairs for all positive $n$. 
The morphisms are defined as
 \[ \text{Hom}_{\Gamma^d_{q,e}} (V, W):= \text{Hom}_{\mathcal{B}_d}
(V^{\otimes d}, W^{\otimes d}). \]
Note that the category $\Gamma^d_{q,e} \mathcal{V}$ always contains a one dimensional $e$-Hecke pair that is obtained by tensoring the defining $A_q(1,1)_1$-comodule with itself $e$ times. It then follows that the category contains an $n$-dimensional vector space for every positive integer $n$.  When $d=1$, the forgetful functor $(V,R)\mapsto V$ to finite dimensional vector spaces induces an equivalence of categories 
\begin{equation}\label{1hecke}
\Gamma^1_{q,e}\mathcal V\cong \mathcal V
\end{equation}
for any $q, e$.
When $q=1$, the $R$-matrix $R_V$ of any $e$-Hecke pair $(V,R_V)$ is just the transpose map $V\otimes V\to V\otimes V,\ v\otimes w\mapsto w\otimes v$, thus we also have the equivalence
\begin{equation}\label{q=1}
\Gamma^d_{1,e}\mathcal V\cong \Gamma^d\mathcal V 
\end{equation} for any $d,e$, where $\Gamma^d\mathcal V$ is the domain category for the classical polynomial functors defined in the introduction.

\begin{defn}\label{def:qpf}
A quantum polynomial functor of degree $d$ on $e$-Hecke pairs is a linear functor
\[ F : \Gamma^d_{q,e} \mathcal{V} \to \mathcal{V}. \]
\end{defn}

We denote by $\mathcal{P}^d_{q,e}$ the category of quantum polynomial functors of degree $d$ on $e$-Hecke pairs. Morphisms are natural transformations of functors. 

\begin{defn}\label{def:qpfcirc}
Define the category $\mathcal{P}^{\circ, d}_{q,e}$ as in Definition \ref{def:qpf} with the added requirement that the domain consists only of $e$-Hecke pairs that are a subquotient of $V_n^{\otimes e}$ for some $n$. 
\end{defn}

Given an object of $\pde$, one can restrict its domain and define an object of $\mathcal{P}^{\circ, d}_{q,e}$. We will show that the categories $\pde$ are the natural setting where one can define composition. 
The equivalence \eqref{q=1} tells us that if we specialize $q=1$ in in both categories defined above, we recover the category of polynomial functors $\mathcal P^d$ of Friedlander-Suslin \cite{FS} (see Introduction).

\begin{rem}
If $d=0$, $\Hom_{\Gamma^{d}_{q,e}} (V^{\otimes d},W^{\otimes d}) = \Hom (k,k)$. Therefore the constant functor, mapping an $e$-Hecke pair $V \mapsto k$, where $k$ is the trivial $A_q(n,n)$-comodule, is a degree $0$ polynomial functor on $e$-Hecke pairs. It's not hard to see all elements in $\mathcal{P}^d_{q,e}$ are direct sums of the constant functor. 
\end{rem}

There is an equivalent characterization of a polynomial functor both in the classical and Hong and Yacobi \cite{HongYacobi} setting, which directly applies to ours. Given $F\in\pde$,
$V, W \in \Gamma^d_{q,e} \mathcal{V}$, we have a map
\[F_{V,W} : S_q(V,W;d) \to \text{Hom}(F(V),F(W))\]
which gives rise to two maps
\[ F'_{V,W}: S_q(V,W;d) \otimes F(V) \to F(W) \]
\[ F_{V,W}'': F(V) \to F(W) \otimes A_q(W,V)_d .\]

\begin{prop} \label{equivalentcharacterization}
(\cite{HongYacobi}, Proposition 3.5)
 A quantum polynomial functor $F$ of degree $d$ is equivalent
to the following data:
\begin{enumerate}
\item for each $V \in \Gamma^d_{q,e} $ a vector space $F(V)$;
\item given $V, W \in \Gamma^d_{q,e} $, a linear map
\[ F_{V,W}'': F(V) \to F(W) \otimes A_q(W,V)_d \]
such that the following diagrams commute
for any $V, W, U \in \Gamma^d_{q,e}$:

\begin{center}
\begin{equation}\label{polyfunctor1}
\begin{tikzpicture} 
  \matrix (m) [matrix of math nodes,row sep=3em,column sep=5em,minimum width=2em]
  {
     F(V) & F(U) \otimes A_q(U,V)_d \\
     F(W) \otimes A_q(W,V)_d & F(U) \otimes A_q(U,W)_d \otimes A_q(W,V)_d \\};
  \path[-stealth]
    (m-1-1) edge node [left] {$F_{V,W}''$} (m-2-1)
            edge node [above] {$F_{V,U}''$} (m-1-2)
    (m-2-1.east|-m-2-2) edge node [below] {$1 \otimes F_{W,U}''$} (m-2-2)
    (m-1-2) edge node [right] {$1 \otimes \Delta_{U,W,V}$} (m-2-2) ;
\end{tikzpicture}
\end{equation}
\end{center}

\begin{center}
\begin{equation} \label{polyfunctor2}
\begin{tikzpicture}
  \matrix (m) [matrix of math nodes,row sep=3em,column sep=5em,minimum width=2em]
  {
     F(V) & F(V) \otimes A_q(V,V)_d \\
     F(V)\otimes k  &  \\};
  \path[-stealth]
    (m-1-1) edge node [above] {$F_{V,V}''$} (m-1-2)
    (m-1-1) edge node [left] {$1$} (m-2-1)
    (m-1-2) edge node [below] {$\,\,\,\,\,\,\,\,\,\,1 \otimes e$} (m-2-1);
\end{tikzpicture}
\end{equation}
\end{center}
\end{enumerate}
\end{prop}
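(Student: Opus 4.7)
The proof will follow the template of \cite[Proposition 3.5]{HongYacobi}, the only new point being that we are working over the larger domain category $\Gamma^d_{q,e}\mathcal V$ of $e$-Hecke pairs rather than over the standard Yang--Baxter pairs $(V_n,R_n)$; however, the entire argument is purely formal and only uses the definitions $S_q(V,W;d)=(A_q(W,V)_d)^*$ together with Propositions \ref{diagram} and \ref{HY2.8}, which have already been established in the present paper for arbitrary Yang--Baxter spaces.

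The plan is as follows. First I would unpack what a linear functor $F:\Gamma^d_{q,e}\mathcal V\to \mathcal V$ is: it is precisely the assignment $V\mapsto F(V)$ in $\mathcal V$ together with, for each pair $V,W$, a linear map
\[
F_{V,W}:\Hom_{\Gamma^d_{q,e}}(V,W)=\Hom_{\mathcal B_d}(V\td,W\td)\cong S_q(V,W;d)\longrightarrow \Hom(F(V),F(W))
\]
(the last isomorphism being Proposition \ref{prop:homspaceschuralgebra}), subject to two axioms: (a) the composition axiom $F_{W,U}(g)\circ F_{V,W}(f)=F_{V,U}(g\circ f)$ for composable $f,g$, and (b) the identity axiom $F_{V,V}(\id_{V\td})=\id_{F(V)}$.

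Next I would perform the standard duality step. Since all relevant spaces are finite dimensional and $S_q(V,W;d)=(A_q(W,V)_d)^*$, giving the map $F_{V,W}$ is equivalent to giving its adjoint
\[
F''_{V,W}:F(V)\longrightarrow F(W)\otimes A_q(W,V)_d,
\]
and this equivalence is natural in $V$ and $W$. This sets up a bijection between the data in the functorial description and the data in (1)+(2) of the proposition; it remains to translate axioms (a) and (b) above into the two diagrams \eqref{polyfunctor1} and \eqref{polyfunctor2}.

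The heart of the argument is then to match axiom (a) with diagram \eqref{polyfunctor1}. For this I would dualize the composition axiom term by term. Proposition \ref{HY2.8} identifies the composition of $\mathcal B_d$-equivariant maps $\Hom_{\mathcal B_d}(W\td,V\td)\otimes\Hom_{\mathcal B_d}(U\td,W\td)\to\Hom_{\mathcal B_d}(U\td,V\td)$ with the Schur-algebra multiplication $m_{U,W,V}:S_q(W,V;d)\otimes S_q(U,W;d)\to S_q(U,V;d)$, whose transpose is by construction the comultiplication $\Delta_{U,W,V}:A_q(U,V)_d\to A_q(U,W)_d\otimes A_q(W,V)_d$. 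Dualizing the equation $F_{W,U}\circ F_{V,W}=F_{V,U}\circ m$ then gives precisely the commutativity of \eqref{polyfunctor1}. Axiom (b) dualizes in the same way: the identity in $S_q(V,V;d)$ is the counit $e$ of $A_q(V,V)$, so $F_{V,V}(\id_{F(V)})=\id$ dualizes to the commutative triangle \eqref{polyfunctor2}. Conversely, given $F''$ satisfying \eqref{polyfunctor1} and \eqref{polyfunctor2}, reversing the duality yields $F_{V,W}$ satisfying (a) and (b), hence a linear functor.

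The only mildly subtle point, which I would flag, is the naturality of the finite-dimensional duality $\Hom(A,B\otimes C)\cong \Hom(C^*\otimes A,B)$ in the three variables $A=F(V)$, $B=F(W)$, $C=A_q(W,V)_d$; this is harmless because $A_q(W,V)_d$ is finite dimensional (being a quotient of $\Hom(V,W)^{\otimes d}$), so no further subtlety arises. I do not expect any genuine obstacle: the proposition is essentially a formal Tannakian-style reformulation, and the only reason the statement must be repeated in our setting is to record that the same formalism continues to work after enlarging the domain category to all $e$-Hecke pairs.
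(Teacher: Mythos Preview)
Your proposal is correct and takes essentially the same approach as the paper: both defer to \cite[Proposition 3.5]{HongYacobi} and observe that the argument is purely formal in the Yang--Baxter data, hence goes through verbatim for $e$-Hecke pairs. In fact you supply more detail than the paper does, correctly isolating the duality $S_q(V,W;d)=(A_q(W,V)_d)^*$ and Proposition~\ref{HY2.8} as the ingredients that translate the functor axioms into diagrams \eqref{polyfunctor1} and \eqref{polyfunctor2}.
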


\begin{proof}
See Proposition 3.5 in \cite{HongYacobi}. We note that even
though they prove it for when $V, W, U$ are defining comodules, their proof goes through unchanged
for general $e$-Hecke pairs as in our setting. 
\end{proof}

We can extend the map $F''_{V,V}: F(V) \to F(V) \otimes A_q(V,V)_d$ to a map $\Delta^V_{F(V)}: F(V) \to F(V) \otimes A_q(V,V)$ that 
satisfies the following property:

\begin{lem} \label{comodule2}
The map $\Delta^V_{F(V)}$ makes $F(V)$ into an
$A_q(V,V)$-comodule.
\end{lem}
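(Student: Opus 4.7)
The plan is to verify the two comodule axioms (coassociativity and counit) by specializing the axioms in Proposition \ref{equivalentcharacterization} to the case $V=W=U$. Recall that $\Delta^V_{F(V)}$ is obtained from $F''_{V,V}\colon F(V)\to F(V)\otimes A_q(V,V)_d$ by post-composing with the inclusion $A_q(V,V)_d\hookrightarrow A_q(V,V)$; since both the comultiplication $\Delta_{V,V,V}$ and the counit $e$ of the bialgebra $A_q(V,V)$ (recalled in the bulleted remarks following Proposition \ref{HY2.8}) are defined on generators to agree with the maps $\Delta_{V,V,V}$ and $e$ used in Proposition \ref{equivalentcharacterization}, extending to all of $A_q(V,V)$ is harmless.

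For coassociativity, I would set $V=W=U$ in diagram \eqref{polyfunctor1}. The commutativity of the resulting square gives exactly
\[
(F''_{V,V}\otimes 1)\circ F''_{V,V} = (1\otimes \Delta_{V,V,V})\circ F''_{V,V},
\]
which, after composing with the inclusion into $F(V)\otimes A_q(V,V)\otimes A_q(V,V)$, is precisely the coassociativity condition $(\Delta^V_{F(V)}\otimes 1)\circ \Delta^V_{F(V)} = (1\otimes \Delta_{A_q(V,V)})\circ \Delta^V_{F(V)}$.

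For the counit axiom, diagram \eqref{polyfunctor2} states exactly that $(1\otimes e)\circ F''_{V,V}$ equals the canonical isomorphism $F(V)\iso F(V)\otimes k$, which is the counit condition for $\Delta^V_{F(V)}$.

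The main obstacle is essentially cosmetic: one has to check that passing from $A_q(V,V)_d$ to the full bialgebra $A_q(V,V)$ is compatible with the structural maps used in Proposition \ref{equivalentcharacterization}, which follows because $\Delta_{V,V,V}$ and $e$ restrict correctly to the graded piece. Since these verifications are immediate from the definitions, the proof is really just a direct reading-off of the two diagrams of Proposition \ref{equivalentcharacterization} in the case $V=W=U$.
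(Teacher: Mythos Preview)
Your proposal is correct and follows exactly the paper's approach: the paper's proof simply observes that the two diagrams in Proposition~\ref{equivalentcharacterization}, specialized to $V=W=U$, are precisely the coassociativity and counit axioms for $\Delta^V_{F(V)}$. Your write-up just unpacks this observation in slightly more detail.
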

\begin{proof}
The diagrams in Proposition $\ref{equivalentcharacterization}$ are exactly what is
needed for $\Delta^V_{F(V)}$ to be a coaction map.
\end{proof}

Let $V$ be an $e$-Hecke pair. The bialgebra $A_q(V,V)$ is a quotient of the free algebra generated by $\{ x^V_{ji} \}$ 
by the ideal generated by $ \sum_{k,l} ( R^{pq}_{V,kl} x^V_{ki} x^V_{lj} - R^{kl}_{V,ij}x^V_{pk}x^V_{ql})$. 
The space $V$ is an $A_q(V,V)$-comodule via the coaction 
\[ \Delta^V_V: v_i \mapsto \sum_j v_j \otimes x^V_{ji}\]
and let
\[ \Delta_V: v_i \mapsto \sum_j v_j \otimes t^V_{ji} \]
be the coaction map that makes $V$ into an $A_q(n,n)$-comodule. 

Define the map $\psi_V : A_q(V,V) \to A_q(n,n)$ on the 
generators of $A_q(V,V)$ as follows: 
\begin{equation}\label{eq:psiV} 
\psi_V(x^V_{ji}) = t^V_{ji}.
\end{equation}

\begin{lem}\label{psiV}
The map $\psi_V$ is a bialgebra homomorphim. 
\end{lem}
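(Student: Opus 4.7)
The plan is to verify separately that $\psi_V$ is a well-defined algebra map and that it intertwines the coalgebra structures. No single step is particularly deep; the main content is unpacking the coaction axioms for $V$ as an $A_q(n,n)$-comodule and comparing them with the defining relations of $A_q(V,V)$.

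\emph{Algebra homomorphism.} By Lemma \ref{definition:AVW} applied with $W=V$, the algebra $A_q(V,V)$ is presented by generators $x^V_{ji}$ subject to the relations
\[\sum_{k,l}\bigl(R^{pq}_{V,kl}x^V_{ki}x^V_{lj}-R^{kl}_{V,ij}x^V_{pk}x^V_{ql}\bigr)=0.\]
The assignment $x^V_{ji}\mapsto t^V_{ji}$ extends to a map from the free algebra on the $x^V_{ji}$ into $A_q(n,n)$, and Lemma \ref{RTTwelldefined} states exactly that the image of each relation vanishes in $A_q(n,n)$. Hence $\psi_V$ descends to a well-defined algebra homomorphism.

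\emph{Comultiplication.} One applies coassociativity of the coaction $\Delta_V:V\to V\otimes A_q(n,n)$, namely $(\Delta_V\otimes 1)\Delta_V=(1\otimes\Delta)\Delta_V$, to a basis vector $v_i$. Expanding the left-hand side yields $\sum_{j,k}v_k\otimes t^V_{kj}\otimes t^V_{ji}$, while the right-hand side is $\sum_k v_k\otimes\Delta(t^V_{ki})$. Matching coefficients of $v_k$ gives $\Delta(t^V_{ki})=\sum_j t^V_{kj}\otimes t^V_{ji}$, which is precisely $\psi_V\otimes\psi_V$ applied to $\Delta(x^V_{ki})=\sum_j x^V_{kj}\otimes x^V_{ji}$.

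\emph{Counit.} The counit axiom for the coaction, $(1\otimes\epsilon)\Delta_V=\mathrm{id}_V$, applied to $v_i$ gives $\sum_j v_j\,\epsilon(t^V_{ji})=v_i$, so $\epsilon(t^V_{ji})=\delta_{ji}=\epsilon(x^V_{ji})$. Thus $\psi_V$ preserves the counit as well, and combined with the previous step it is a bialgebra homomorphism. The only conceivable obstacle would have been well-definedness, but this is resolved by Lemma \ref{RTTwelldefined}; everything else is a direct consequence of the fact that $V$ is an $A_q(n,n)$-comodule.
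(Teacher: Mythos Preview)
Your proof is correct and follows exactly the paper's approach: well-definedness via Lemma \ref{RTTwelldefined}, then compatibility with comultiplication and counit deduced from the coaction axioms for $\Delta_V$. The only difference is that you spell out the coassociativity and counit computations explicitly, whereas the paper simply states that both equations ``follow from the fact that $\Delta_V$ is a coaction.''
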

\begin{proof}

We first need to check that $\psi_V$ is well defined. We can do this by showing that :  
\[ \psi_V ( \sum_{k,l} R^{pq}_{V,kl} x^V_{ki}x^V_{lj} - R^{kl}_{V,ij}x^V_{pk}x^V_{ql}
)  =   \sum_{k,l} R^{pq}_{V,kl} t^V_{ki}t^V_{lj} - R^{kl}_{V,ij}t^V_{pk}t^V_{ql}  = 0. \]
The first equality is by definition and the second one holds according to Lemma $\ref{RTTwelldefined}$. 

We now show that $\psi_V$ is a coalgebra homomorphism. This is equivalent to showing
that $\psi_V$ commutes with the comultiplication and the counit, namely 
\[ \Delta_{A_q(n,n)} \psi_V = (\psi_V \otimes \psi_V) \Delta_{A_q(V,V)}. \]
\[ e_{A_q(n,n)} \psi_V = e_{A_q(V,V)} \]
Both equations follows from the fact that $\Delta_V$ is a coaction. 
\end{proof}

Let $G$ be a quantum polynomial functor of degree $d$. We have maps:
\[ G_{V,W}'' : G(V) \to G(W) \otimes A_q(W,V)_{d}. \]

If we denote by $\{ v_i^G\}$ and $\{w^G_j\}$ the bases of $G(V)$ and $G(W)$, respectively, the map $G_{V,W}''$ takes $v^G_i \mapsto w^G_j \otimes t^{W,V}_{ji}$ for $t^{W,V}_{ji} \in A_q(W,V)_{d}$. 
By the definition of $A_q(G(W),G(V))$ we get a map 
\[G(V) \to G(W) \otimes A_q(G(W),G(V))\] 
which takes 
\[v^G_i \mapsto w^G_j \otimes x^{W,V}_{ji},\]
where $x^{W,V}_{ji}$ are the generators of $ A_q(G(W),G(V))$. 

\begin{lem}\label{psigeneral}
The map 
\begin{equation}\label{psi}
\psi^G_{W,V} : A_q(G(W),G(V)) \to A_q(W,V)
\end{equation}
defined on generators as $\psi^G_{W,V}(x^{W,V}_{ji}) :=  t^{W,V}_{ji}$ is well-defined. Therefore it is an algebra homomorphism. 
\end{lem}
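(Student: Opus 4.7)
The plan is to follow the template of Lemmas~\ref{psiV} and \ref{RTTwelldefined}. By the presentation of $A_q(G(W),G(V))$ given in Lemma~\ref{definition:AVW}, this algebra is generated by the symbols $x^{W,V}_{ji}$ modulo the quadratic relations
\[\sum_{k,l}\bigl(R^{pq}_{G(W),kl}\, x^{W,V}_{ki} x^{W,V}_{lj} - R^{kl}_{G(V),ij}\, x^{W,V}_{pk} x^{W,V}_{ql}\bigr).\]
Consequently $\psi^G_{W,V}$ is well-defined exactly when the identity
\begin{equation*}
\sum_{k,l} R^{pq}_{G(W),kl}\, t^{W,V}_{ki}\, t^{W,V}_{lj} \;=\; \sum_{k,l} R^{kl}_{G(V),ij}\, t^{W,V}_{pk}\, t^{W,V}_{ql}
\end{equation*}
holds in $A_q(W,V)$. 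Once this is proven, the algebra homomorphism property is automatic, since the map is defined on free generators and the defining relations of the source are sent to $0$.

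To prove the displayed identity, I would first recognize it as the coefficient form of a ``heterotypic'' analogue of Lemma~\ref{RWcomodulehomomorphismequivalence}. Let $\Phi : G(V)^{\otimes 2} \to G(W)^{\otimes 2} \otimes A_q(W,V)_{2d}$ denote the tensor square of $G_{V,W}''$ followed by shuffling the tensor factors and multiplying in $A_q(W,V)$. Then the identity above is equivalent to the commutativity
\[(R_{G(W)} \otimes \id)\circ \Phi \;=\; \Phi \circ R_{G(V)},\]
i.e.\ to the statement that $G_{V,W}''$ intertwines the braidings on its source and target.

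My approach to establishing this compatibility is to unfold $R_{G(V)}$ and $R_{G(W)}$ using formula \eqref{eq:Rmatrixcomodule}, applied to the $A_q(n,n)$-comodule structures on $G(V)$ and $G(W)$ supplied by Lemmas~\ref{comodule2} and \ref{psiV}, and then exploit the functoriality encoded by the commutative diagram~\eqref{polyfunctor1} to relate the three algebras $A_q(V,V)$, $A_q(W,W)$, and $A_q(W,V)$. The main obstacle is coherently orchestrating these three algebras and their comodule interactions within one diagram chase. A cleaner route may be to work entirely inside the braided monoidal category of $A_q(n,n)$-comodules (Proposition~\ref{prop:Aqnnbraidedmonoidalcategory}), in which $G(V)$ and $G(W)$ both live with braidings precisely $R_{G(V)}$ and $R_{G(W)}$; the desired identity then becomes a naturality square for the braiding applied to $G_{V,W}''$, provable by unwinding the definition of the $t^{W,V}_{ji}$ from $G_{V,W}''$ and invoking the coquasitriangularity \eqref{equation:cqt} of $A_q(n,n)$.
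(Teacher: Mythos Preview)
Your reduction is correct and matches the paper: well-definedness amounts to the identity
\[
\sum_{k,l} R^{pq}_{G(W),kl}\, t^{W,V}_{ki}\, t^{W,V}_{lj} \;=\; \sum_{k,l} R^{kl}_{G(V),ij}\, t^{W,V}_{pk}\, t^{W,V}_{ql}
\]
in $A_q(W,V)$, and this is equivalent to the square $(R_{G(W)}\otimes 1)\circ\Phi=\Phi\circ R_{G(V)}$ you describe. So far the two arguments coincide.

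Where they diverge is in how to verify that square. You propose unfolding $R_{G(V)}$ and $R_{G(W)}$ via the universal $\mathcal R$ of $A_q(n,n)$ and using naturality of the braiding in the category of $A_q(n,n)$-comodules. This route has a real obstacle you have not resolved: $V$ and $W$ are in general comodules over \emph{different} bialgebras $A_q(n,n)$ and $A_q(m,m)$, so $G(V)$ and $G(W)$ need not lie in any common braided category. Even when $m=n$, the map $G''_{V,W}$ is not a morphism of $A_q(n,n)$-comodules (its target $G(W)\otimes A_q(W,V)$ carries no relevant $A_q(n,n)$-coaction), so there is no naturality square for the braiding to invoke. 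Chasing through \eqref{polyfunctor1} and \eqref{equation:cqt} does not obviously bridge this, and the later Lemma~\ref{lemma:braiding2} that handles a similar intertwining relies on the present lemma, so cannot be borrowed.

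The paper's argument is much shorter and avoids $A_q(n,n)$ entirely. It factors $\Phi$ through the intermediate object $G(W)^{\otimes 2}\otimes A_q(G(W),G(V))$ via the tautological map $v^G_i\mapsto \sum_j w^G_j\otimes x^{W,V}_{ji}$. The left square then commutes because the relation $\sum R^{pq}_{G(W),kl}x^{W,V}_{ki}x^{W,V}_{lj}=\sum R^{kl}_{G(V),ij}x^{W,V}_{pk}x^{W,V}_{ql}$ is \emph{exactly} the defining relation of $A_q(G(W),G(V))$ from Lemma~\ref{definition:AVW}; the right square commutes trivially since both vertical maps are $R_{G(W)}\otimes 1$ and the horizontal map $1\otimes\psi^G_{W,V}$ (viewed on the degree-two part of the tensor algebra, where it is already defined) appears identically top and bottom. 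In short: the identity you need is the defining relation of the source algebra, transported by $\psi^G_{W,V}$.
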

\begin{proof}
The proof is similar to how we proved that $\psi_V$ is well-defined in Lemma \ref{psiV}. We want 
\[ \psi^G_{W,V}  ( \sum_{k,l} R^{pq}_{G(W),kl} x^{W,V}_{ki}x^{W,V}_{lj} - R^{kl}_{G(V),ij}x^{W,V}_{pk}x^{W,V}_{ql}
)  =   \sum_{k,l} R^{pq}_{G(W),kl} t^{W,V}_{ki}t^{W,V}_{lj} - R^{kl}_{G(V),ij}t^{W,V}_{pk}t^{W,V}_{ql}  = 0. \]
The first equality is by definition. The second equality follows from the commutativity of the exterior square in the diagram
\begin{equation}\label{eq:diagramGWV}
\begin{CD}
G(V) \otimes G(V)@>{(1\otimes m)(1 \otimes \tau \otimes 1)G_{V,W}''^{\otimes 2}}>>G(W) \otimes G(W) \otimes  A_q(G(W),G(V))@>{1\otimes \psi^G_{W,V}}>>G(W) \otimes G(W) \otimes  A_q(W,V)\\
@V{R_{G(V)}}VV @V{R_{G(W)}\otimes 1}VV @V{R_{G(W)}\otimes 1}VV\\
G(V) \otimes G(V)@>{(1\otimes m)(1 \otimes \tau \otimes 1)G_{V,W}''^{\otimes 2}}>> G(W) \otimes G(W) \otimes  A_q(G(W),G(V))@>{1\otimes \psi^G_{W,V}}>>G(W) \otimes G(W) \otimes  A_q(W,V)
\end{CD}
\end{equation}
where $m$ is the multiplication in $A_q(GW,GV)$.
The diagram is commutative because the two small squares are: the first by Lemma \ref{RWcomodulehomomorphismequivalence} and the second is trivial.

\end{proof}

\begin{lem}\label{psiG}
The maps $\psi^G_{W,V}$ satisfy the following two equations for all $e$-Hecke pairs $U,V,W$:
\begin{equation} \label{eq:psiG1}
\Delta_{U,W,V} \circ \psi^G_{U,V} = (\psi^G_{U,W} \otimes \psi^G_{W,V}) \circ \Delta_{G(U),G(W), G(V)}
\end{equation}
\begin{equation}\label{eq:psiG2}
e_{A_q(G(V),G(V))} = e_{A_q(V,V)} \psi^G_{V,V} 
\end{equation}
\end{lem}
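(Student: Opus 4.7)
\medskip

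\textbf{Proof plan.} The strategy is to observe that both sides of each equation are algebra homomorphisms from $A_q(G(U),G(V))$ (respectively $A_q(G(V),G(V))$) to the corresponding target, so it suffices to check equality on the algebra generators $x^{U,V}_{ji}$ (respectively $x^{V,V}_{ji}$). Indeed, by Lemma \ref{psigeneral} each $\psi^G_{?,?}$ is an algebra homomorphism; the comultiplications $\Delta_{U,W,V}$ and $\Delta_{G(U),G(W),G(V)}$ are algebra maps by the construction in Section \ref{sec:prelim}; and the counits $e_{A_q(V,V)}$, $e_{A_q(G(V),G(V))}$ are algebra maps. Tensor products and compositions of algebra homomorphisms remain algebra homomorphisms, so both sides of \eqref{eq:psiG1} and of \eqref{eq:psiG2} are algebra homomorphisms, and it is enough to verify the identities on generators.

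For \eqref{eq:psiG1}, I will unfold both sides applied to $x^{U,V}_{ji}$. The right-hand side evaluates, by definition of $\Delta_{G(U),G(W),G(V)}$ and of $\psi^G$, to $\sum_k t^{U,W}_{jk}\otimes t^{W,V}_{ki}$. The left-hand side evaluates to $\Delta_{U,W,V}(t^{U,V}_{ji})$. To identify these, I will apply the commutative diagram \eqref{polyfunctor1} (taken with $F=G$) to the basis vector $v_i^G\in G(V)$: the path through $G(U)\otimes A_q(U,V)_d$ produces $\sum_j u_j^G\otimes \Delta_{U,W,V}(t^{U,V}_{ji})$, whereas the path through $G(W)\otimes A_q(W,V)_d$ yields $\sum_{j,k} u_j^G\otimes t^{U,W}_{jk}\otimes t^{W,V}_{ki}$. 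Comparing coefficients of $u_j^G$ in the basis of $G(U)$ gives exactly the required identity
\[
\Delta_{U,W,V}(t^{U,V}_{ji})=\sum_k t^{U,W}_{jk}\otimes t^{W,V}_{ki}.
\]

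For \eqref{eq:psiG2}, evaluating on $x^{V,V}_{ji}$ reduces the claim to $e_{A_q(V,V)}(t^{V,V}_{ji})=\delta_{ji}$, since the counit of $A_q(G(V),G(V))$ sends the generator $x^{V,V}_{ji}$ to $\delta_{ji}$. This last equality follows from the commutative diagram \eqref{polyfunctor2} applied to $v_i^G$: the composite $(1\otimes e)\circ G''_{V,V}$ sends $v_i^G$ to $\sum_j v_j^G \cdot e(t^{V,V}_{ji})$, which must equal $v_i^G$, forcing $e(t^{V,V}_{ji})=\delta_{ji}$.

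The only real subtlety is keeping the indexing conventions for the generators $x^{?,?}_{ji}$ consistent with those for $\Delta_{?,?,?}$ and for the coaction-like maps $G''_{?,?}$; once this bookkeeping is set up, both verifications are immediate consequences of Proposition \ref{equivalentcharacterization}. I therefore expect no substantial obstacle beyond careful index management.
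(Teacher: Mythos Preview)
Your proposal is correct and follows essentially the same approach as the paper: reduce each identity to a statement about the elements $t^{?,?}_{ji}$ and then read that statement off directly from the commutative diagrams \eqref{polyfunctor1} and \eqref{polyfunctor2} of Proposition~\ref{equivalentcharacterization}. The paper's proof is terser---it simply asserts that \eqref{eq:psiG1} is equivalent to $\Delta_{U,W,V}(t^{U,V}_{ji})=\sum_k t^{U,W}_{jk}\otimes t^{W,V}_{ki}$ and that \eqref{eq:psiG2} is equivalent to $e_{A_q(V,V)}(t^{V,V}_{ji})=\delta_{ji}$---whereas you make explicit the reason this reduction is valid (both sides are algebra homomorphisms, so equality on generators suffices), which is a welcome clarification the paper leaves implicit.
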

\begin{proof}
The first equation is equivalent to $\Delta_{U,V,W}(t^{U,V}_{ij})=\sum_j t^{U,W}_{ik}\otimes t^{W,V}_{kj}$. This in turn follows from the commutativity of diagram (\ref{polyfunctor1}).

The second equation is equivalent to $e_{A_q(V,V)} (t^{W,V}_{ij}) = \delta_{ij}$ which follows from the commutativity of diagram (\ref{polyfunctor2}).
\end{proof}

\subsection{Basic operations on quantum polynomial functors}\label{subsection:basicproperties}

We denote by $\mathcal P^d_q:=\bigoplus_e \pde$ the category of quantum polynomial functors of degree $d$, by $\mathcal P_{q,e}:=\bigoplus_d\pde$ the category of quantum polynomial functors on $e$-Hecke pairs, and by $\mathcal P_q:=\bigoplus_{e,d}\pde$ the category of quantum polynomial functors. 

\subsubsection{Tensor product in $\mathcal P_{q,e}$}
Given two quantum polynomials on $e$-Hecke pairs $F\in\pde$ and $G\in\mathcal P^{d'}_{q,e}$, the (external) tensor product $F\otimes G \in\mathcal P^{d+d'}_{q,e}$ is defined in the same way as in \cite{HongYacobi}. For an $e$-Hecke pair $V$, we define it to be 
\[ (F\otimes G)(V)=F(V)\otimes G(V).\]
 To define it on the morphisms one uses the inclusion $\mathcal{B}_d \times \mathcal{B}_{d'}\subset \mathcal{B}_{d+d'}$. To be more explicit, for two $e$-Hecke pairs $V,W$, the map $(F\otimes G)_{V,W}$ is the following composition:
\begin{align*}
\Hom_{\mathcal{B}_{d+d'}}(V^{\otimes d+d'},W^{\otimes d+d'})&\inj\Hom_{\mathcal{B}_d\times \mathcal{B}_{d'}}(V^{\otimes d}\otimes V^{\otimes d'},W^{\otimes d}\otimes W^{\otimes d'})\\
&\to\Hom_{\mathcal{B}_d}(V^{\otimes d},W^{\otimes d})\otimes\Hom_{\mathcal{B}_{d'}}( V^{\otimes d'},W^{\otimes d'})\\
&\to\Hom(F(V),F(W))\otimes\Hom(G(V),G(W))\\
&\to\Hom(F(V)\otimes G(V),F(W)\otimes G(W))
\end{align*}
where the third map is $F_{V,W}\otimes G_{V,W}$. 

Recall the constant functor $k \in \mathcal{P}^0_{q,e}$. It maps an $e$-Hecke pair $V$ to the trivial $A_q(n,n)$-comodule $k$. It is then an easy exercise using the definition above to show that $F \otimes k \cong F$ via the natural transformation $\eta : F \otimes k \to F$ given by the standard isomorphism $\eta_V: F(V) \otimes k \cong F(V)$. We similarly have that $k \otimes F \cong F$. This can be summarized as follows: 

\begin{prop}
The category $\mathcal{P}_{q,e}$ is a monoidal category with the tensor product $\otimes$ and the unit object $k$. 
\end{prop}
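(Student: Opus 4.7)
The plan is to transfer the monoidal structure componentwise from the target category $\mathcal{V}$, which is itself monoidal under the ordinary tensor product of vector spaces with unit $k$. First, I would verify that $\otimes$ extends to a bifunctor $\mathcal{P}_{q,e}\times\mathcal{P}_{q,e}\to\mathcal{P}_{q,e}$: for natural transformations $\alpha\colon F\to F'$ and $\beta\colon G\to G'$, set $(\alpha\otimes\beta)_V:=\alpha_V\otimes\beta_V$, and observe that naturality in $V$ with respect to a morphism in $\Hom_{\mathcal{B}_{d+d'}}(V^{\otimes d+d'},W^{\otimes d+d'})$ reduces to the naturality of $\alpha$ and $\beta$, since the action of such a morphism on $F(V)\otimes G(V)$ factors through the restriction to $\mathcal{B}_d\times\mathcal{B}_{d'}$ by the explicit description of $(F\otimes G)_{V,W}$ recalled above.

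Next I would construct the associator $a_{F,G,H}\colon (F\otimes G)\otimes H\to F\otimes(G\otimes H)$ and the unitors $l_F\colon k\otimes F\to F$ and $r_F\colon F\otimes k\to F$ pointwise, using the canonical isomorphisms in $\mathcal{V}$. Naturality of the unitors is immediate because, after the canonical identifications, the inclusion $\mathcal{B}_0\times\mathcal{B}_d\hookrightarrow\mathcal{B}_d$ is the identity. For the associator, naturality in $V$ with respect to $\phi\in\Hom_{\mathcal{B}_{d+d'+d''}}(V^{\otimes d+d'+d''},W^{\otimes d+d'+d''})$ reduces to the observation that both $((F\otimes G)\otimes H)(\phi)$ and $(F\otimes (G\otimes H))(\phi)$ are assembled from the image of $\phi$ under restriction along a common embedding $\mathcal{B}_d\times\mathcal{B}_{d'}\times\mathcal{B}_{d''}\hookrightarrow\mathcal{B}_{d+d'+d''}$, combined with the naturality of the vector-space associator.

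Finally, the pentagon and triangle axioms transfer from $\mathcal{V}$ to $\mathcal{P}_{q,e}$ because the structural isomorphisms are defined componentwise and these axioms hold identically in $\mathcal{V}$. The main obstacle I foresee is the naturality check for the associator: one must verify that the two nested restriction chains $(\mathcal{B}_d\times\mathcal{B}_{d'})\times\mathcal{B}_{d''}$ and $\mathcal{B}_d\times(\mathcal{B}_{d'}\times\mathcal{B}_{d''})$ embed into $\mathcal{B}_{d+d'+d''}$ as the same subgroup under the canonical identification, so that the two candidate actions on $F(V)\otimes G(V)\otimes H(V)$ really do coincide. Once this essentially bookkeeping point is settled, everything else follows formally from the monoidal structure of $\mathcal{V}$.
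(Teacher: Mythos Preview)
Your proposal is correct and follows the same approach as the paper, which is to transport the monoidal structure of $\mathcal V$ pointwise to $\mathcal P_{q,e}$. The paper's own treatment is considerably terser: it only spells out the unitors $F\otimes k\cong F$ and $k\otimes F\cong F$ via the standard vector-space isomorphisms and leaves the associator, the bifunctoriality, and the coherence axioms as an easy exercise, whereas you have actually outlined the bookkeeping (in particular the compatibility of the two embeddings of $\mathcal B_d\times\mathcal B_{d'}\times\mathcal B_{d''}$ into $\mathcal B_{d+d'+d''}$) that the paper suppresses.
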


\subsubsection{Duality in $\pde$}\label{duality}

One defines a duality on the functor category using dualities on the domain and codomain categories. 
In $\mathcal V$, we have the linear dual $V\mapsto V^*=\Hom_k(V,k)$. 
For our category $\Gamma^d_{q,e}\mathcal V$ where the objects have additional structures, we ``lift'' the linear dual to what is compatible with this structure, namely the twisted dual. 
To explain the twist here, it is more convenient to work with the  $U_q(\mathfrak{gl}_n)$-modules where the twisted duality is rather standard. (Recall that an $A_q(n,n)$-comodule can be thought of as a polynomial representation for $U_q(\mathfrak{gl}_n)$.) 
For a $\uqgln$-module $V$, one can define a twisted $\uqgln$ structure on the linear dual $V^*$ of the underlying vector space by precomposing the $\uqgln$ action by an antiautomorphism $\tau_1$ (see \cite[9.20]{quantumJ} for the definition of $\tau_1$ and the twisted dual). We denote by $^\tau V$ the resulting $\uqgln$-module. 
Then we have
\begin{equation}\label{doubledual}
^\tau(^\tau V)\cong V
\end{equation}
in $\uqgln$-mod.
We remark that $^\tau -$ is a duality of a highest weight category, under which the irreducibles are self-dual and a standard module and a costandard module of the same highest weight are dual to each other. 
In particular, the duality preserves the degree of polynomial representations, that is, if $V$ is an $\uqgln$-module of degree $e$, then the dual $^\tau V$ is also a degree $e$ $\uqgln$-module, so we can extend this duality on the category of $A_q(n,n)$-comodules. We therefore have a contravariant functor \[^\tau-:\Gamma^d_{q,e}\mathcal V\to\Gamma^d_{q,e}\mathcal V.\]

Now we define the duality $-^\#$ on $\pde$ as
  \[F^\#:=^\tau \!\! F(^\tau- ).\]
It is not hard to show that $F^\#$ satisfies the properties in Proposition \ref{equivalentcharacterization} and therefore it is a quantum polynomial functor. Taking $q=1$ in our setting agrees with the classical definition of the dual.

\subsection{Examples}\label{examples}
We present several examples of quantum polynomial functors. 

\subsubsection{Tensor powers} For each $d\in \N$, the $d$-th tensor product functor $\otimes^d :V\mapsto V^{\otimes d}$ is a quantum polynomial of degree d. To be more precise, for each $e$, there is a functor $\otimes ^d_e$ in $\mathcal P^d_{q,e}$
which maps an $e$-Hecke pair $V$ to the $de$-Hecke pair $V^{\otimes d}$. The map $\otimes^d_{V,W}$ on the morphisms is just the inclusion
\[\Hom_{\mathcal{B}_d}(V,W)\inj\Hom(V^{\otimes d},W^{\otimes d}).\]
We abusively denote this functor by $\otimes^d$ for any $e$.

When $d=0$ we get the constant polynomial functor $\otimes ^0:V\mapsto k$, which we denote abusively by $k$,
and $\otimes ^1$ is the identity functor which we prefer to denote by $I$. 

\subsubsection{Divided powers and symmetric powers}\label{sub:dividedpowers}
Given an $e$-Hecke pair $W$, the divided power $\Gamma^{d,W}_{q,e}$ is the object in $\pde$ represented by $W$, namely,
$\Gamma^{d,W}_{q,e} :V\mapsto \Hom_{\mathcal{B}_d}(W^{\otimes d},V^{\otimes d})$. The map on morphisms  
\[\Hom_{\mathcal{B}_d}(V_1^{\otimes d},V_2^{\otimes d})\to \Hom(\Hom_{\mathcal{B}_d}(W^{\otimes d},V_1^{\otimes d}), \Hom_{\mathcal{B}_d}(W^{\otimes d},V_2^{\otimes d}))\] 
is given by $f\mapsto f\circ -$. 
When $W$ is the $e$-Hecke pair $(k^{\otimes e},q^{e^2})$, we denote it by $\Gamma^d_{q,e}$ and call it the divided power.
We may drop the index ``$e$'' in the notation because it is determined by $V$. That is, if $V$ is an $e$-Hecke pair, then $\Gamma^{d,V}_q$ denotes the functor $\Hom_{\Gamma^d_{q,e}\mathcal V}(V,-)$ in $\pde$.
Since a divided power is a representable functor, Yoneda's lemma tells us that it is a projective object in $\pde$ (see Proposition \ref{prop:projective} for a detailed proof).
Moreover, divided powers form a set of projective generators for $\pde$ for generic $q$ as we shall prove in Theorems \ref{thm:finitegeneration} and  \ref{thm:finitegenerationindecomposable}.
 
For an $e$-Hecke pair $V$ let $S^d_V\in\pde$ be defined by
\[S^d_V := (\Gamma^{d,V}_{q,e})^{\#}\]
where the dual $F^{\#}$ of a functor $F$ is defined in \S\ref{duality}.
These are the corepresentable objects in $\pde$ and form a set of injective cogenerators for $\pde$.

\subsubsection{Quantum symmetric, divided and exterior powers} 
In this subsection we assume $\operatorname{char}(k)=0$. Let $V$ be an $e$-Hecke pair.
An important set of examples of quantum polynomial functors are the quantum symmetric and exterior powers for generic $q$ due to Berenstein and Zwicknagl \cite{BZ} (they also require $\operatorname{char}(k)=0$). These are quantum deformations of the classical symmetric and exterior power, though their theory is significantly more complex. For example, it is not necessary that the dimension of the quantum symmetric power of an $\uqgln$-module to be the same as the dimension of the classical symmetric power of the corresponding $U(\mathfrak{gl}_n)$-module. 

In this subsection we define the quantum symmetric, divided and exterior powers when $q$ is generic or when it is a $p$-th root of unity for an odd integer $p$. In the generic case, the definition of quantum symmetric and exterior can easily be seen to be the same as in \cite{BZ}. 


Let $V$ be an $e$-Hecke pair. The map $R_{V}: V^{\otimes 2} \to V^{\otimes 2}$ is diagonalizable for generic $q$. For all non-zero $q$ it has eigenvalues $\pm q^r$, for a finite number of integers $r$ (see \cite{BZ} for details; note that they only state it for when $V$ is indecomposable, but it is easy to see that this holds for the direct sum of indecomposables as well). If $q$ is a $p$-th root of unity and $p$ is an odd integer, then $q^a \neq -q^b$ for any integers $a, b$. Therefore the following definition makes sense for this choice of $q$, even though the $R$-matrix $R_V$ might not be diagonalizable. 
Define 
\begin{equation}\label{eq:extanddiv}
\begin{split}
\Lambda^2_q V := \sum_{i\in\Z} \{ w \in V^{\otimes 2} | (R_V + q^i)^N w =0 \text{ for } N\gg0\}, \\\Gamma^2_q(V) := \sum_{i\in \Z}\{ w \in V^{\otimes 2} | (R_V-q^i)^N w=0  \text{ for } N\gg0\}.
\end{split}
\end{equation}

We can now define
\[ S^d_q(V) := \frac{V^{\otimes d} }  { \sum_{1 \leq i \leq d-1} V \otimes \cdots \otimes \Lambda^2(V)_{i,i+1} \otimes \cdots \otimes V}, \]
\[ \Lambda^d_q(V) := \cap_{1 \leq i \leq d-1} V \otimes \cdots \otimes \Lambda^2(V)_{i,i+1} \otimes \cdots \otimes V, \] 
 \[ \Gamma_q^d(V) := \cap_{1 \leq i \leq d-1} V \otimes \cdots \otimes \Gamma_q^2(V)_{i,i+1} \otimes \cdots \otimes V. \]

\begin{rem}
When $q$ is generic, our quantum divided power agrees with the definition of quantum symmetric power in \cite{BZ}. Note that semisimplicity makes it unnecessary for \cite{BZ} to distinguish the two. We prefer to define the quantum symmetric power as a quotient since it is more natural. 
\end{rem}



\begin{prop}\label{lem:genericqsymextpolfunct}
The quantum symmetric, exterior and divided powers are quantum polynomial functors. 
\end{prop}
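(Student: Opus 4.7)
The plan is to verify that each of $S^d_q, \Lambda^d_q, \Gamma^d_q$ satisfies Definition~\ref{def:qpf}. Since the tensor power $\otimes^d \in \pde$ is already known to be a quantum polynomial functor, the only genuinely new content is to show that for every morphism $f \in \Hom_{\Gamma^d_{q,e}}(V,W) = \Hom_{\mathcal{B}_d}(V^{\otimes d}, W^{\otimes d})$ in the domain category, the linear map $f$ restricts (respectively descends) to a well-defined map on the indicated subspaces (respectively quotients) of $V^{\otimes d}$. Once this is established, $k$-linearity, preservation of composition, and preservation of identities on morphisms will be inherited directly from $\otimes^d$, because the three constructions are obtained from $\otimes^d$ by applying canonical subspace or quotient operations.

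The heart of the argument is the following lemma, which I would state and prove first: for any $f \in \Hom_{\mathcal B_d}(V^{\otimes d},W^{\otimes d})$ and any $1 \le i \le d-1$,
\[
f\bigl(V\otimes\cdots\otimes \Lambda^2_q(V)_{i,i+1}\otimes\cdots\otimes V\bigr)\subseteq W\otimes\cdots\otimes \Lambda^2_q(W)_{i,i+1}\otimes\cdots\otimes W,
\]
and analogously for $\Gamma^2_q$ in place of $\Lambda^2_q$. To see this, recall that $\mathcal B_d$ acts on $V^{\otimes d}$ (respectively $W^{\otimes d}$) via $\rho_{d,V}(T_i) = 1^{\otimes i}\otimes R_V\otimes 1^{\otimes d-i-1}$ (respectively with $R_W$). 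The hypothesis $f\circ \rho_{d,V}(T_i)=\rho_{d,W}(T_i)\circ f$ implies, by induction on $N$, that $f\circ (1^{\otimes i}\otimes(R_V+q^j)^N\otimes 1^{\otimes d-i-1}) = (1^{\otimes i}\otimes(R_W+q^j)^N\otimes 1^{\otimes d-i-1})\circ f$ for every integer $j$ and every $N\ge 1$. Consequently $f$ sends the kernel of the left operator into the kernel of the right operator, and summing over $j$ and taking $N\gg 0$ yields the claim by the definition \eqref{eq:extanddiv} of $\Lambda^2_q$; the $\Gamma^2_q$ case is identical with $-q^j$ replaced by $q^j$.

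From this lemma the remaining steps are immediate. For $\Lambda^d_q$ and $\Gamma^d_q$, which are defined as intersections over $i$ of the subspaces above, $f$ maps the intersection in $V^{\otimes d}$ into the intersection in $W^{\otimes d}$, so we simply take $\Lambda^d_q(f)$ and $\Gamma^d_q(f)$ to be the restrictions. For $S^d_q$, which is defined as the quotient by the sum of the analogous subspaces, the lemma says $f$ preserves this sum, hence descends to a map $S^d_q(V)\to S^d_q(W)$; set $S^d_q(f)$ to be this descended map. Linearity in $f$ is clear. Functoriality ($F(gf)=F(g)F(f)$ and $F(\id)=\id$) is inherited from $\otimes^d$ because restrictions of compositions are compositions of restrictions, and similarly for quotients. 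This completes the verification that $S^d_q, \Lambda^d_q, \Gamma^d_q \in \pde$.

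The only point that requires attention is ensuring that the generalized eigenspace description of $\Lambda^2_q(V)$ and $\Gamma^2_q(V)$ is genuinely stable under arbitrary intertwiners (not only under $\mathcal B_2$-automorphisms of a single space), which is exactly what the lemma above establishes; in the generic case the generalized eigenspaces are true eigenspaces and the argument simplifies, while in the odd root of unity case the hypothesis that $q^a\ne -q^b$ for all integers $a,b$ guarantees that the generalized eigenspaces for eigenvalues $\{q^j\}$ and $\{-q^j\}$ remain disjoint, so the two constructions $\Lambda^2_q$ and $\Gamma^2_q$ remain well separated and the argument goes through unchanged. I do not expect any further obstacle.
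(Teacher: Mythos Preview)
Your proof is correct and follows essentially the same approach as the paper: both argue that a $\mathcal{B}_d$-intertwiner $f$ preserves each slice $V\otimes\cdots\otimes\Lambda^2_q(V)_{i,i+1}\otimes\cdots\otimes V$ (and likewise for $\Gamma^2_q$) because $f$ commutes with $T_i$, and then deduce the statements for the intersection and the quotient. Your version is merely more explicit about the generalized-eigenspace step, spelling out the intertwining with $(R_V\pm q^j)^N$ rather than leaving it implicit.
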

\begin{proof}

Let $f\in \Hom_{\mathcal{B}_d} (V^{\otimes d}, W^{\otimes d})$. In order to show that $\Lambda^d_q$ and $\Gamma^d_q$ are quantum polynomial functors we need to show that the restriction of $f$ to $\Lambda^d_q (V)$ has image in $\Lambda^d_q (W)$ and the restriction of $f$ to $\Gamma^d_q (V)$ has image in $\Gamma^d_q (W)$. Since $f$ commutes with the action of $T_i \in \mathcal{B}_d$, we have $ f(V \otimes \cdots \otimes \Lambda_q^2(V)_{i,i+1} \otimes \cdots \otimes V)\subseteq W \otimes \cdots \otimes \Lambda_q^2(W)_{i,i+1} \otimes \cdots \otimes W$. It then easily follows from the definition that $f$ restricts to a map from $\Lambda^d_q (V)$ to $\Lambda^d_q (W)$. A similar proof works for $\Gamma^d_q$.

From the fact that $f$ maps  $ V \otimes \cdots \otimes \Lambda_q^2(V)_{i,i+1} \otimes \cdots \otimes V$ to $W \otimes \cdots \otimes \Lambda_q^2(W)_{i,i+1} \otimes \cdots \otimes W$ it follows that $f$ induces a map (which we denote by the same letter) $f:S^d_q (V) \to S_q^d(W)$ from which we deduce that $S^d_q$ is a quantum polynomial functor.  
\end{proof}

\begin{rem}
Symmetric/divided power and quantum symmetric/divided power are different quantum polynomial functors. 
Consider for example the divided power functor $\Gamma^{2,{V_1^{\otimes e}}}_{q,e}$, where $V_1$ is the defining $A_q(1,1)$-comodule. For any $V\in \Gamma^2_{q,e}$, the image $\Gamma^{2,{V_1^{\otimes e}}}_{q,e}(V)$ is the eigenspace of $R_V:V\otimes V \to V \otimes V$ with eigenvalue $q^e$. The quantum divided power $\Gamma^2_q$ maps $V$ to the direct sum of eigenspaces of $R_V$ with eigenvalues $+q^r$ for all integers $r$. 
\end{rem}

\begin{rem}
Note that when $q=1$, the quantum polynomials $S^{d}_{V_1^{\otimes e}}$, $S^{d}_q$, $\Gamma^{d,V_1^{\otimes e}}_{q,e}$, and $\Gamma_q^d$ all return the classical symmetric power, since in that case $q^e=1=+q^r, \, \forall r$.
\end{rem}

With our definition, the following statement is clear.
\begin{prop}\label{prop:deg2ses}
Let $V$ be an $e$-Hecke pair. Let $N$ be the maximal rank of the Jordan blocks of $R_V$.  
The sequence
 \[0 \to \Gamma^2_q V  \xrightarrow{p_1} V^{\otimes 2} \xrightarrow{p_2} \Lambda^2 V \to 0\]
is exact, where $p_1$ is the inclusion $\Gamma^2_q V \xhookrightarrow{} V^{\otimes 2}$ (recall that we defined $\Gamma^2_q V$ as a subspace of $V^{\otimes 2}$) and $p_2$ is defined by
\[ p_2: w \mapsto \prod_{-e\leq i\leq e} (R_V-q^i)^Nw .\] 
\end{prop}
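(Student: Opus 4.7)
The plan is to use the generalized eigenspace decomposition of $V\otimes V$ with respect to the operator $R_V$. By the discussion preceding \eqref{eq:extanddiv} (following Berenstein–Zwicknagl), the eigenvalues of $R_V$ are of the form $\pm q^r$ for $r$ in a finite set of integers; the hypothesis that $V$ is an $e$-Hecke pair will ensure these integers lie in $\{-e,\ldots,e\}$, so that the product defining $p_2$ includes one factor for every positive eigenvalue. Since $N$ is the maximal rank of the Jordan blocks, $(R_V-\lambda)^N$ annihilates the generalized eigenspace $G_\lambda$. One then has an internal direct sum decomposition
\[
V^{\otimes 2}=\Gamma^2_qV\oplus\Lambda^2V,\qquad \Gamma^2_qV=\bigoplus_{-e\leq i\leq e}G_{q^i},\quad \Lambda^2V=\bigoplus_{-e\leq i\leq e}G_{-q^i},
\]
exactly matching the definitions \eqref{eq:extanddiv}.

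Next I would verify two things about $p_2$. First, $p_2$ vanishes on $\Gamma^2_qV$: for $w\in G_{q^j}$ the specific factor $(R_V-q^j)^N$ kills $w$ by the choice of $N$, so $p_2(w)=0$, and linearity extends this to $\Gamma^2_qV$. Second, $p_2$ restricts to an automorphism of $\Lambda^2V$. Indeed, fix $j$ with $-e\leq j\leq e$ and write $R_V|_{G_{-q^j}}=-q^j\,\operatorname{Id}+N_j$ with $N_j$ nilpotent; then each factor becomes
\[
(R_V-q^i)^N\big|_{G_{-q^j}}=\bigl((-q^j-q^i)\,\operatorname{Id}+N_j\bigr)^N,
\]
whose leading scalar $-q^j-q^i=-q^i(1+q^{j-i})$ is nonzero (for $q$ generic because $-1$ is a root of unity, and for $q$ a primitive odd $p$-th root of unity because every nontrivial power of $q$ is still an odd root of unity, never $-1$). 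Hence each factor is invertible on $G_{-q^j}$, their composition is invertible, and summing over $j$ shows that $p_2|_{\Lambda^2V}$ is an automorphism of $\Lambda^2V$. In particular $p_2$ really does land in $\Lambda^2V$ (its image on the decomposition above is $0\oplus\Lambda^2V$).

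Combining the two observations, $\ker p_2=\Gamma^2_qV=\operatorname{im}(p_1)$ and $\operatorname{im}(p_2)=\Lambda^2V$; together with the injectivity of $p_1$ (it is an inclusion by definition), this is precisely the claimed exactness.

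The only nontrivial step is Step~3, the invertibility of $p_2$ on $\Lambda^2V$: it relies on the separation of the two families of eigenvalues $\{+q^i\}$ and $\{-q^i\}$, which is where the hypothesis on $q$ (generic, or a primitive $p$-th root of unity for odd $p$) enters decisively, exactly as in the discussion preceding \eqref{eq:extanddiv}. Pinning down that the range of exponents is bounded by $e$ is the other point that needs a short justification, and I would invoke the fact that $V$ is a subquotient of $V_n^{\otimes e}$ (Remark~\ref{lemma:eHeckesubquotient}) so that the eigenvalues of $R_V$ on $V\otimes V$ are among those of $R_{V_n}$ on $V_n^{\otimes 2e}$, which fall in the stated range.
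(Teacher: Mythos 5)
Your overall strategy---decompose $V^{\otimes 2}$ into the generalized eigenspaces of $R_V$, identify $\Gamma^2_q V$ and $\Lambda^2_q V$ with the ``positive'' and ``negative'' halves, show $p_2$ kills the first and is invertible on the second---is the natural one, and the paper itself offers no argument beyond saying the statement ``is clear,'' so you are supplying exactly what is omitted. Your Step~3 (invertibility of $p_2$ on $\Lambda^2_q V$ because $-q^j-q^i\neq 0$, i.e.\ $q^{j-i}\neq -1$, for $q$ generic or an odd root of unity) is correct and is where the hypothesis on $q$ does its work, just as you say.

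The genuine gap is in your justification that the product $\prod_{-e\le i\le e}(R_V-q^i)^N$ contains a factor for every positive eigenvalue of $R_V$. Your claim that the exponents lie in $\{-e,\dots,e\}$ is false, and the argument you sketch conflates $R_V$ with a single $R_{V_n}$. Remark~\ref{lemma:eHeckesubquotient} does give that $V$ is a subquotient of $V_n^{\otimes e}$, but the operator $R_V$ on $V^{\otimes 2}\subset V_n^{\otimes 2e}$ is not one copy of $R_{V_n}$ acting on an adjacent pair of slots; by the hexagon axioms it is a product of $e^2$ adjacent $R_{V_n}$'s (it corresponds to the generator $T_{w_1}$ of $\cH_{2,e}$, which has length $e^2$), so its eigenvalues are not ``among those of $R_{V_n}$.'' A concrete counterexample is already in the paper: \S\ref{sub:dividedpowers} notes that $V_1^{\otimes e}$ is the $e$-Hecke pair $(k^{\otimes e},q^{e^2})$, so for $e\ge 2$ the single eigenvalue $q^{e^2}$ of $R_V$ has exponent $e^2>e$. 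In that case $\Gamma^2_q V=V^{\otimes 2}$ and $\Lambda^2_q V=0$, yet $p_2=\prod_{-e\le i\le e}(q^{e^2}-q^i)^N$ is a nonzero scalar when $q$ is generic, so the displayed sequence fails to be exact. (This is also a defect in the proposition as stated: the index range of the product must be enlarged to an interval containing all the exponents actually occurring, e.g.\ $\{-e^2,\dots,e^2\}$ suffices for the example above.) Once the range is corrected the rest of your argument goes through unchanged.
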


Here we record two short exact sequences of degree $2$ quantum polynomial functors
\begin{equation}\label{deg2ses}
0\to\Lambda^2_q\to\otimes^2\to S^2_q\to 0,
\end{equation}
which is by definition, and
\begin{equation}\label{deg2ses2}
0\to\Gamma^2_q\to\otimes^2\to \Lambda^2_q\to 0,
\end{equation}
which follows from Proposition \ref{prop:deg2ses}.
Also note that the first map in \eqref{deg2ses} and the last map in \eqref{deg2ses2} make an exterior power a direct summand of a tensor power.

Furthermore, we have the following.

\begin{prop}
There are isomorphisms of quantum polynomial functors
\[ (\otimes^n)^\#\cong\otimes^n,\]
\[ (S_q^n )^\# \cong \Gamma^n_q ,\]
\[ (\Lambda^n_q)^\# \cong \Lambda^n_q .\]

\end{prop}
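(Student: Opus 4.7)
The plan is to deduce all three isomorphisms from the degree-two short exact sequences \eqref{deg2ses} and \eqref{deg2ses2}, together with two basic facts about the twisted duality. First, to prove $(\otimes^n)^\# \cong \otimes^n$, I would use the natural isomorphism ${}^\tau(V\otimes W) \cong {}^\tau V \otimes {}^\tau W$ coming from the compatibility of the antiautomorphism $\tau_1$ with the coproduct on $\uqgln$ (see \cite[9.20]{quantumJ}). Iterating and using \eqref{doubledual}, we obtain natural isomorphisms $(\otimes^n)^\#(V) = {}^\tau(({}^\tau V)^{\otimes n}) \cong V^{\otimes n}$, which assemble into an isomorphism of polynomial functors. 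Second, I would note that $-^\#$ is an exact contravariant endofunctor on $\pde$; this is immediate from the exactness of ${}^\tau-$ on the finite-dimensional module categories involved.

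For $n=2$, applying $-^\#$ to \eqref{deg2ses} and \eqref{deg2ses2} and using $(\otimes^2)^\# \cong \otimes^2$ yields two short exact sequences with middle term $\otimes^2$. The key observation is that the twisted dual ${}^\tau V$ of an $e$-Hecke pair is an $e$-Hecke pair whose $R$-matrix shares the same generalized-eigenvalue structure as $R_V$, so the subspaces \eqref{eq:extanddiv} defining $\Lambda^2_q V$ and $\Gamma^2_q V$ are preserved under the monoidal identification ${}^\tau V\otimes {}^\tau V \cong {}^\tau(V\otimes V)$. Matching the dualized sequences against \eqref{deg2ses2} and \eqref{deg2ses} gives $(\Lambda^2_q)^\# \cong \Lambda^2_q$, $(S^2_q)^\# \cong \Gamma^2_q$, and $(\Gamma^2_q)^\# \cong S^2_q$.

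For arbitrary $n$, I would bootstrap using that $\Lambda^n_q V$ and $\Gamma^n_q V$ are realized as intersections of subfunctors of $\otimes^n$ of the form $V^{\otimes(i-1)}\otimes \Lambda^2_q V\otimes V^{\otimes(n-i-1)}$ (and its $\Gamma^2_q$-analog), while $S^n_q V$ is the quotient of $V^{\otimes n}$ by the sum of the first family. Exactness of $-^\#$ combined with the degree-two isomorphisms from the previous paragraph allows us to dualize these constructions, trading intersections for quotients by sums of duals, and yielding $(\Lambda^n_q)^\# \cong \Lambda^n_q$ and $(S^n_q)^\# \cong \Gamma^n_q$. The main obstacle I expect lies in the degree-two step: verifying carefully that the $R$-matrix on ${}^\tau V$ matches that of $V$ in a manner that exchanges the positive and negative generalized-eigenspaces appropriately, so that $\Gamma^2_q$ and $S^2_q$ are interchanged by the duality while $\Lambda^2_q$ is preserved. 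Once that compatibility is established, the remaining steps are essentially formal.
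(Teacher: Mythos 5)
Your overall strategy mirrors the paper's proof: establish $(\otimes^n)^\#\cong\otimes^n$ from compatibility of $^\tau-$ with tensor products, handle degree $2$ directly, and bootstrap to general $n$ using the realization of $\Lambda^n_q$, $\Gamma^n_q$, $S^n_q$ via the degree-two building blocks.

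However, the paragraph you flag as ``the main obstacle'' contains a genuine confusion that, as written, would not deliver the result. The key technical fact, which the paper proves explicitly, is the \emph{equality} $R_{^\tau V}=R_V$: passing to the linear dual $V^*$ transposes $R_V$, and twisting by the antiautomorphism $\tau_1$ transposes again, so the two transpositions cancel. Consequently the generalized eigenspaces of $R_{^\tau V}$ coincide exactly with those of $R_V$ under $\phi_2$ --- positive goes to positive, negative to negative; nothing is exchanged. Your earlier sentence (that the subspaces in \eqref{eq:extanddiv} are ``preserved'') is the correct picture, but your concluding sentence asserting that the duality should ``exchange the positive and negative generalized-eigenspaces'' contradicts it and is also internally inconsistent: if the signs were swapped, $\Lambda^2_q$ (the negative part) would not be self-dual. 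What actually interchanges $\Gamma^2_q$ and $S^2_q$ is the sub-versus-quotient flip built into any contravariant duality: $\Gamma^2_q V$ is the positive eigenspace realized as a \emph{subobject} of $V^{\otimes 2}$, while $S^2_q V$ is the positive part realized as a \emph{quotient}, namely $V^{\otimes 2}/\Lambda^2_q V$. Since $\Lambda^2_q$ appears as both a subobject (via \eqref{deg2ses}) and a quotient (via \eqref{deg2ses2}) of $\otimes^2$, the eigenspace preservation shows $(\Lambda^2_q)^\#\cong\Lambda^2_q$, and then dualizing \eqref{deg2ses2} and comparing with \eqref{deg2ses} yields $(S^2_q)^\#\cong\Gamma^2_q$ with no eigenvalue exchange. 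Once you replace the ``exchange'' heuristic with the equality $R_{^\tau V}=R_V$ plus the sub/quotient flip, the remainder of your bootstrap to general $n$ is sound and agrees with the paper's argument.
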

\begin{proof}
Let $V$ be an $e$-Hecke pair, or equivalently, a polynomial representation of $\uqgln$ of degree $e$.
There is a canonical isomorphism 
\[\phi_n: (^\tau V )^{\otimes n}\to ^\tau\!\! (V^{\otimes n})\] of $\uqgln$-modules, since $\tau_1$ and the comultiplication commute. (See \cite[9.20]{quantumJ}. We note that what Jantzen denotes by the twisted comultiplication $\Delta'$ is the usual comultiplication structure that makes $A_q(n,n)$ live in the dual of $U_q(\mathfrak{gl}_n)$.) 
This is then also an isomorphism of $A_q(n,n)$-comodules. 
By \eqref{doubledual}, the dual of $\phi_n$ induces the desired isomorphism $\otimes^n\cong(\otimes^n)^\#$ of polynomial functors.

To prove the other statements, first consider the $n=2$ case.
The $R$-matrix of $^\tau V$ satisfies 
\[R_{^\tau V} = R_V.\] 
This follows from the fact that the $R$-matrix of $V^*$ is the transpose of $R_V$ (follows easily from Proposition 4.2.7 in \cite{CP}) and the matrix of $V$ twisted by $\tau_1$ is also the transpose of $R_V$ (follows from the formulas in \cite[9.20]{quantumJ}). The isomorphism $\phi_2$ preserves eigenspaces of $R_{^\tau V} = R_V$, and hence restricts to an isomorphism from $(\Lambda^2_q (^\tau V))$, which is the direct sum of eigenspaces of $R_{^\tau V}$ corresponding to eigenvalues $-q^i$, to $^\tau\!(\Lambda^2_q V)$, which is the direct sum of eigenspaces of $R_{V}$ corresponding to eigenvalues $-q^i$. 
Thus, $\phi_2$ induces an isomorphism \[\Lambda^2_q (^\tau V) \cong ^\tau\!\!\!(\Lambda^2_q V).\] 
The duality $^\tau$ satisfies $^\tau (V\otimes W) \cong ^\tau \! V \otimes ^\tau W$ because both $\tau_1$ and the antipode are antiautomorphisms.
So we have
\[^\tau \!(V \otimes \cdots \otimes \Lambda_q^2(V)_{i,i+1} \otimes \cdots \otimes V) \cong ^\tau \!\!V \otimes \cdots \otimes \Lambda_q^2(^\tau V)_{i,i+1} \otimes \cdots \otimes ^\tau \!\! V.\] 
By intersecting we obtain $^\tau \! (\Lambda^d_q(V)) \cong \Lambda_q^d(^\tau V)$, from which $\Lambda^n_q \cong (\Lambda^n_q)^\# $ follows (again, we use \eqref{doubledual}).

The second statement is then obtained by comparing the short exact sequence \eqref{deg2ses} and the dual of the short exact sequence \eqref{deg2ses2} for $d=2$. 
Then it is clear from the definition of the quantum symmetric and divided powers that the same is true for all $d$.
\end{proof}



\begin{rem}
We now explain why we assume $p$ to be an odd root of unity. Assume $q=i=\sqrt{-1}$ and $V_2$ is the defining $A_q(2,2)$-comodule. We want to decompose $V_2^{\otimes 2}$ into two parts $\Gamma_q^2(V_2)$ and $\Lambda_q^2(V_2)$. 
The problem with the construction above is that the $R$-matrix $R_2$ only has one eigenvalue $q = -q^{-1} = i$. It is not diagonalizable; it has a $2 \times 2$ Jordan block and two $1 \times 1$ blocks. 
Thus, we cannot separate the eigenvalues of $R_{V_2}$ into ``positive'' and ``negative'' eigenvalues and therefore definition \eqref{eq:extanddiv} doesn't make sense.  
\end{rem} 



\section{Braiding on $\mathcal{P}_{q,e}$}\label{section:braiding}
We can use the braiding structure on the category of $A_q(n,n)$-comodules to endow the category $(\mathcal{P}_{q, e},\otimes)$ with the structure of a braided monoidal category.

Let $F \in \mathcal{P}_{q,e}^{d}$ and $G \in \mathcal{P}_{q,e}^{d'}$. The tensor products $F \otimes G$ and $G \otimes F$ both live in $\mathcal{P}_{q,e}^{d + d'}$.
For an $e$-Hecke pair $V$, recall that $F(V)$ and $G(V)$ are both $A_q(n,n)$-comodules for some $n$ according to Proposition \ref{prop:composition}. 
Let $R_{F(V), G(V)} : F(V) \otimes G(V) \to G(V) \otimes F(V)$ be the braiding isomorphism defined in equation \eqref{eq:Rmatrixcomodule}. We use it to define the natural transformation $R_{F,G} : F \otimes G \to G \otimes F$ by \[R_{F,G} (V) := R_{F(V), G(V)}.\] 
This map turns $\mathcal{P}_{q,e}$ into a braided monoidal category. But before we can prove that we need two results which are interesting in their own right.

Let $V$ be an $e$-Hecke Pair and let $F \in \pde$ and $G \in \mathcal{P}^{d'}_{q,e}$. Then $F(V)$ and $G(V)$ are naturally $A_q(V,V)$-comodules. They also have the structure of $A_q(n,n)$-comodules by Proposition \ref{prop:composition}.

The coalgebra $A_q(n,n)$ is coquasitriangular, therefore the comodule structure supplies us with the braiding $R_{F(V), G(V)}:F(V) \otimes G(V) \to G(V) \otimes F(V)$ that we mentioned above. However the coalgebra $A_q(V,V)$ is also coquasitriangular. Therefore there is a universal $R$-matrix $\mathcal{R}^V \in A_q(V \otimes V) \otimes A_q(V\otimes V)$ that satisfies properties \eqref{equation:cqt} and is defined on the generators of $A_q(V,V)$ by
\[ \mathcal{R}^V (x^V_{ij} \otimes x^V_{kl}) = (R_V)^{ik}_{lj}.\] 
Since $F(V), G(V)$ are $A_q(V,V)$-comodules, there is an $R$-matrix $R_{F(V), G(V)}': F(V) \otimes G(V) \to G(V) \otimes F(V)$.

\begin{prop}\label{lemma:braiding1}
The maps $R_{F(V), G(V)}$ and $R'_{F(V), G(V)}$ are equal. 
\end{prop}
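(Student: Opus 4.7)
The plan is to show that the bialgebra homomorphism $\psi_V:A_q(V,V)\to A_q(n,n)$ from Lemma~\ref{psiV} intertwines the $A_q(V,V)$- and $A_q(n,n)$-coactions on $F(V)$ and $G(V)$, and pulls the universal $R$-matrix $\mathcal R$ of $A_q(n,n)$ back to $\mathcal R^V$. Once these two compatibilities are in hand, the equality $R_{F(V),G(V)}=R'_{F(V),G(V)}$ is immediate from the defining formula \eqref{eq:Rmatrixcomodule}.

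First I would pin down how the $A_q(n,n)$-coaction on $F(V)$ arises: by the construction underlying Proposition~\ref{prop:composition}, it should be nothing but the pushforward $\Delta_{F(V)}=(1\otimes \psi_V)\circ \Delta^V_{F(V)}$ of the natural $A_q(V,V)$-coaction $\Delta^V_{F(V)}$ coming from $F''_{V,V}$ via Lemma~\ref{comodule2}, and similarly for $G(V)$. This is consistent with the baseline case: by the very definition of $\psi_V$, the coaction $\Delta_V$ on $V$ itself factors as $(1\otimes\psi_V)\circ \Delta^V_V$.

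Next I would prove $\mathcal R\circ(\psi_V\otimes\psi_V)=\mathcal R^V$ as functionals on $A_q(V,V)\otimes A_q(V,V)$. Since $\psi_V$ is a bialgebra homomorphism and both $\mathcal R$ and $\mathcal R^V$ are multiplicative in each variable by \eqref{equation:cqt}, it suffices to check the equality on pairs of generators $x^V_{ij},x^V_{kl}$. There $\mathcal R^V(x^V_{ij}\otimes x^V_{kl})=(R_V)^{ik}_{lj}$ by \eqref{eq:universalR} transplanted to $A_q(V,V)$; and on the other side, applying \eqref{eq:Rmatrixcomodule} to the Yang-Baxter space $(V,R_V)$ gives $R_{V,V}(v_i\otimes v_k)=\sum_{l,j}\mathcal R(t^V_{lk}\otimes t^V_{ji})\, v_l\otimes v_j$, so matching coefficients against $R_{V,V}=R_V$ yields $\mathcal R(t^V_{ij}\otimes t^V_{kl})=(R_V)^{ik}_{lj}$, which is exactly $\mathcal R(\psi_V(x^V_{ij})\otimes \psi_V(x^V_{kl}))$.

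With these two ingredients the proof finishes by direct substitution into \eqref{eq:Rmatrixcomodule}: the expression $(1\otimes 1\otimes\mathcal R)(1\otimes\tau\otimes 1)(\Delta_{G(V)}\otimes \Delta_{F(V)})\tau$ rewrites, using the first step, as $(1\otimes 1\otimes \mathcal R(\psi_V\otimes\psi_V))(1\otimes\tau\otimes 1)(\Delta^V_{G(V)}\otimes \Delta^V_{F(V)})\tau$, which by the second step collapses to the analogous expression with $\mathcal R^V$, namely $R'_{F(V),G(V)}$. I expect no serious obstacle; the only mildly delicate point is verifying that the equality of Step~2 on generators propagates to all of $A_q(V,V)$, which is a formal consequence of $\psi_V$ being a bialgebra map together with the multiplicativity axioms for $\mathcal R$ and $\mathcal R^V$.
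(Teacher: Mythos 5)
Your proof is correct and follows essentially the same route as the paper's: show $\mathcal{R}\circ(\psi_V\otimes\psi_V)=\mathcal{R}^V$ by checking on generators (using $\psi_V(x^V_{ij})=t^V_{ij}$ and the identity $R_{V,V}=R_V$), then note that the $A_q(n,n)$-coaction on $F(V)$ and $G(V)$ is the pushforward along $\psi_V$ of the $A_q(V,V)$-coaction, so the two braidings coincide by substitution into \eqref{eq:Rmatrixcomodule}. The paper's proof is terser and leaves these two compatibilities implicit; you state them explicitly, but the argument is the same.
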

\begin{proof}
The map $\psi_V : A_q(V,V) \to A_q(n,n)$  defined in equation \eqref{eq:psiV} has the property that 
\[\mathcal{R}^V (x, y) = \mathcal{R}(\psi_V(x), \psi_V(y))\]
for all $x,y \in A_q(V,V)$. The equation holds when $x,y$ are generators of $A_q(V,V)$ because we defined $\mathcal{R}' (x^V_{ij} \otimes x^V_{kl}) = (R_V)^{ik}_{lj}$.  
The $R$-matrix of the $A_q(n,n)$-comodule $V$ is $R_V$, therefore the following equation holds by definition $\mathcal{R}(t^V_{ij} \otimes t^V_{kl}) = (R_V)^{ik}_{lj}$. 
We have that $\psi_V (x^V_{ij}) = t_{ij}^V$ and therefore the equation above holds when $x,y$ are generators. 

Since the equation holds on generators, it holds for every $x,y \in A_q(V,V)$. It implies that the $R$-matrices $R_{F(V), G(V)}$ and $R'_{F(V), G(V)}$ are equal. 
\end{proof}

\begin{lem}\label{lemma:braiding2}
The following diagram is commutative:
\[ \begin{tikzpicture} 
  \matrix (m) [matrix of math nodes,row sep=3em,column sep=13em,minimum width=2em]
  {
     F(V) \otimes G(V) & F(W) \otimes G(W) \otimes A_q(W,V) \otimes A_q(W,V) \\
     G(V) \otimes F(V) & G(W) \otimes F(W) \otimes A_q(W,V) \otimes A_q(W,V) \\};
  \path[-stealth]
    (m-1-1) edge node [left] {$R_{F(V), G(V)}$} (m-2-1)
            edge node [above] {$(1 \otimes \tau \otimes 1) (F_{V,W}'' \otimes G_{V,W}'')$} (m-1-2)
    (m-2-1.east|-m-2-2) edge node [below] {$(1 \otimes \tau \otimes 1)( G_{V,W}'' \otimes F_{V,W}'')$} (m-2-2)
    (m-1-2) edge node [right] {$R_{F(W), G(W)} \otimes 1$} (m-2-2) ;
\end{tikzpicture}\]
\end{lem}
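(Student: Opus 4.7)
The plan is to verify the commutativity via a direct element-level chase. For $x\in F(V), y\in G(V)$, I would write $F''_{V,W}(x) = \sum x^{(1)} \otimes x^{(2)}$ and $G''_{V,W}(y) = \sum y^{(1)} \otimes y^{(2)}$ with $x^{(1)} \in F(W), y^{(1)} \in G(W)$ and $x^{(2)}, y^{(2)} \in A_q(W,V)$. The top-then-right composition produces $\sum R_{F(W),G(W)}(x^{(1)}\otimes y^{(1)}) \otimes x^{(2)} \otimes y^{(2)}$, and the left-then-bottom one begins by expanding $R_{F(V),G(V)}(x\otimes y)$.

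The first step is to apply Proposition \ref{lemma:braiding1} to express both $R_{F(V),G(V)}$ and $R_{F(W),G(W)}$ through the intrinsic coquasitriangular structures on the bialgebras $A_q(V,V)$ and $A_q(W,W)$, using the universal $R$-matrices $\mathcal{R}^V, \mathcal{R}^W$ paired against the coactions $F''_{V,V}, G''_{V,V}$ and $F''_{W,W}, G''_{W,W}$ (these being the $A_q$-comodule structures supplied by Lemma \ref{comodule2}).

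The key reduction is via the coassociativity diagram \eqref{polyfunctor1}. Specializing to $U=W$ yields
\[(F''_{W,W}\otimes\id)\circ F''_{V,W} = (\id\otimes\Delta_{W,W,V})\circ F''_{V,W},\]
and the analogue for $G$; these let me replace the ``internal'' coactions $F''_{W,W}, G''_{W,W}$ appearing after $F''_{V,W}, G''_{V,W}$ by the left $A_q(W,W)$-coaction $\Delta_{W,W,V}\colon A_q(W,V)\to A_q(W,W)\otimes A_q(W,V)$ on the $A_q(W,V)$-components. Specializing to $U=V$ gives analogous identities involving $\Delta_{V,W,V}$, which allow a parallel manipulation of the left-then-bottom composition. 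After these substitutions, the equality of the two sides reduces to a compatibility between the $\mathcal{R}^V, \mathcal{R}^W$-pairings and the various $\Delta$-maps among $A_q(-,-)$-algebras, which is encoded by the defining RTT relations for $A_q(W,V)$ (Lemma \ref{definition:AVW}) and is ultimately guaranteed by the fact that $R_{F(V),G(V)}$ is an $A_q(n,n)$-comodule morphism (Proposition \ref{prop:Aqnnbraidedmonoidalcategory}).

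The main obstacle will be the combinatorial bookkeeping: four $A_q(-,-)$-algebras (namely $A_q(V,V), A_q(V,W), A_q(W,V), A_q(W,W)$) enter with their $\Delta$-maps, two universal $R$-matrices $\mathcal{R}^V, \mathcal{R}^W$ mediate the braidings, and the functor structure maps $F''_{-,-}, G''_{-,-}$ weave through everything. A cleaner but less explicit route, avoiding the need to invoke $\mathcal{R}^V, \mathcal{R}^W$ at all, would be to generalize the commutative diagram \eqref{eq:diagramGWV} from the proof of Lemma \ref{psigeneral} to the ``mixed'' setting of two functors $F, G$; the required cross-RTT relation then follows directly from the fact that $R_{F(V),G(V)}$ is an $A_q(n,n)$-comodule map.
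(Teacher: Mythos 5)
Your proposal correctly identifies the key mechanism --- namely that commutativity ultimately rests on $R_{F(V),G(V)}$ being an $A_q(n,n)$-comodule morphism, which is the RTT relation of Lemma \ref{RWcomodulehomomorphismequivalence} --- and your ``cleaner route'' at the end is essentially the paper's argument. The difference is a trick you didn't anticipate: rather than generalizing diagram \eqref{eq:diagramGWV} to a ``mixed'' version for two functors $F$ and $G$ (which would require re-proving a cross-RTT statement), the paper applies the \emph{already established} diagram \eqref{eq:diagramGWV} to the single functor $H=F\oplus G$. Since $(F\oplus G)(V)\otimes(F\oplus G)(V)$ decomposes as the direct sum of $F(V)\otimes F(V)$, $F(V)\otimes G(V)$, $G(V)\otimes F(V)$, $G(V)\otimes G(V)$, and $R_{(F\oplus G)(V)}$ restricted to the $F(V)\otimes G(V)$ block is precisely $R_{F(V),G(V)}$ while the structure map $(F\oplus G)''_{V,W}$ restricts blockwise to $F''_{V,W}$ and $G''_{V,W}$, the desired commutative square drops out by restriction to one summand. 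This substitutes a one-line reduction for the generalization you sketch. Your primary route --- the element-chase through $\mathcal{R}^V$, $\mathcal{R}^W$ using coassociativity \eqref{polyfunctor1} --- is plausible in principle and lands on the same underlying fact, but as you note it carries heavy bookkeeping that the $F\oplus G$ device entirely avoids; in a written-up proof the direct-sum trick is both shorter and less error-prone.
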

\begin{proof}
Note that by Proposition \ref{lemma:braiding1}, we can replace in the equation above $R_{F(V),G(V)}$ and $R_{F(W),G(W)}$ by $R'_{F(V),G(V)}$ and $R'_{F(W),G(W)}$, respectively. 

By considering the commutative diagram in equation \eqref{eq:diagramGWV} in the proof of Lemma \ref{psigeneral} for the functor $F \oplus G$ we obtain that the diagram 
\[\begin{CD}
(F \oplus G) (V) \otimes (F \oplus G) (V)@>{(1\otimes \tau \otimes 1)((F \oplus G)_{V,W}'' \otimes (F \oplus G)_{V,W}'')}>>(F \oplus G)(W) \otimes (F \oplus G) (W) \otimes  A_q(W,V)\\
@V{R_{F \oplus G(V)}}VV @V{R_{F \oplus G(W)}\otimes 1}VV\\
(F \oplus G) (V) \otimes (F \oplus G)(V)@>{(1\otimes \tau \otimes 1)((F \oplus G)_{V,W}'' \otimes (F \oplus G)_{V,W}'')}>> (F \oplus G)(W) \otimes (F \oplus G) (W) \otimes  A_q(W,V)
\end{CD}\]
is commutative. The result follows from the fact that the tensor product $(F \oplus G) (V) \otimes (F \oplus G)(V)$ can be written as 
\[F(V) \otimes F(V) \oplus F(V) \otimes G(V) \oplus G(V) \otimes F(V) \oplus G(V) \otimes G(V)\]
and that the restriction of $R_{F \oplus G(V)}$ to $F(V) \otimes G(V)$ is just $R_{F (V), G(V)}$, while the restriction of $(1\otimes \tau \otimes 1)((F \oplus G)_{V,W}'' \otimes (F \oplus G)_{V,W}'')$ to $F(V) \otimes G(V)$ and $G(V) \otimes F(V)$ is just $(1 \otimes \tau \otimes 1) F_{V,U}'' \otimes G_{V,U}''$ and $(1 \otimes \tau \otimes 1) (G_{V,U}'' \otimes F_{V,U}'')$, respectively.

\end{proof}

\begin{prop}\label{lem:naturalitytocomodule}
Let $F, F' \in \pde$ be quantum polynomial functors and let $\alpha: F \to F'$ be a natural transformation. Then the maps $\alpha_V : F(V) \to F'(V)$ are $A_q(V,V)$-comodule homomorphisms. 
\end{prop}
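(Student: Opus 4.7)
The plan is to translate the claim into one about $F''_{V,V}$, then use the duality between $A_q(V,V)_d$ and $S_q(V,V;d)$ to reduce it to the naturality of $\alpha$ evaluated on morphisms in $\Gamma^d_{q,e}\mathcal V$.

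First I would unwind the definition of the $A_q(V,V)$-comodule structure on $F(V)$. By Lemma \ref{comodule2}, the structure map $\Delta^V_{F(V)} : F(V) \to F(V) \otimes A_q(V,V)$ factors through $F''_{V,V} : F(V) \to F(V) \otimes A_q(V,V)_d$ followed by the inclusion of the degree-$d$ part. Hence the statement that $\alpha_V$ is a comodule homomorphism is equivalent to the commutativity of
\[
(\alpha_V \otimes \id_{A_q(V,V)_d}) \circ F''_{V,V} = F'^{\,\prime\prime}_{V,V} \circ \alpha_V
\]
as maps $F(V) \to F'(V) \otimes A_q(V,V)_d$.

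Next I would exploit the fact that $A_q(V,V)_d$ is finite dimensional, so an element of $F'(V) \otimes A_q(V,V)_d$ is determined by its pairings against $g \in (A_q(V,V)_d)^* = S_q(V,V;d)$. Under the isomorphism $S_q(V,V;d) \cong \Hom_{\mathcal{B}_d}(V^{\otimes d}, V^{\otimes d})$ of Proposition \ref{prop:homspaceschuralgebra}, the map $F''_{V,V}$ is characterized by the property that for every $g \in S_q(V,V;d)$ one has $(\id_{F(V)} \otimes g) \circ F''_{V,V} = F(g)$ (this is precisely the correspondence between $F_{V,V}$ and its adjoint $F''_{V,V}$ underlying Proposition \ref{equivalentcharacterization}). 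The analogous identity holds for $F'$. Pairing the equation above with an arbitrary $g \in S_q(V,V;d)$ therefore reduces the desired commutativity to the identity
\[
\alpha_V \circ F(g) = F'(g) \circ \alpha_V \quad \text{for all } g \in \Hom_{\mathcal{B}_d}(V^{\otimes d}, V^{\otimes d}).
\]

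Finally, this last identity is literally the naturality square for $\alpha : F \to F'$ applied to the morphism $g : V \to V$ in $\Gamma^d_{q,e}\mathcal V$, which holds by hypothesis. Since it holds for every $g$, the two maps $F(V) \to F'(V) \otimes A_q(V,V)_d$ agree, proving that $\alpha_V$ intertwines the coactions. The only nontrivial point in this plan is bookkeeping the correspondence between $F_{V,V}$ and $F''_{V,V}$ via the pairing $A_q(V,V)_d \otimes S_q(V,V;d) \to k$; once that identification is made explicit, the argument is purely formal.
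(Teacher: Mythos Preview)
Your proof is correct and follows essentially the same approach as the paper: both reduce the comodule-homomorphism condition to the statement that $\alpha_V$ commutes with $F(g)$ and $F'(g)$ for every $g\in S_q(V,V;d)\cong\Hom_{\mathcal{B}_d}(V^{\otimes d},V^{\otimes d})$, which is exactly the naturality square. The only difference is that you spell out the duality between $A_q(V,V)_d$ and $S_q(V,V;d)$ explicitly, whereas the paper simply asserts that being an $S_q(V,V;d)$-module map implies the comodule statement.
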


\begin{proof}
The following diagram commutes for any $f\in \Hom(V,V)\cong S_q(V,V;d)$ because $\alpha$ is a natural transformation:
\[ \begin{tikzpicture} 
  \matrix (m) [matrix of math nodes,row sep=3em,column sep=5em,minimum width=2em]
  {
     F(V)  & F'(V) \\
     F(V) & F'(V) \\};
  \path[-stealth]
    (m-1-1) edge node [left] {$F(f)$} (m-2-1) 
            edge node [above] {$\alpha_V$} (m-1-2)
    (m-2-1.east|-m-2-2) edge node [below] {$\alpha_V$} (m-2-2)
    (m-1-2) edge node [right] {$F'(f)$} (m-2-2) ;
\end{tikzpicture}\]
This is the same as saying that the map $\alpha_V$ is an $S_q(V,V;d)$-module homomorphism from which the conclusion follows. 
\end{proof}

Now we can prove the main result of this subsection.

\begin{thm}\label{thm:tensorproductisbraided}
The category $\mathcal{P}_{q,e}$ is a braided monoidal category with braiding isomorphism $R_{F,G}: F \otimes G \to G \otimes F$. 
\end{thm}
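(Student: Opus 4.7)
The plan is to verify each piece of the braided monoidal structure from the pointwise definition, leveraging that the category of $A_q(n,n)$-comodules is already braided monoidal by Proposition \ref{prop:Aqnnbraidedmonoidalcategory}. The monoidal structure on $\mathcal{P}_{q,e}$ is already established, so what remains is (a) that each $R_{F,G}$ really is a morphism in $\mathcal{P}_{q,e}$, i.e.\ a natural transformation $F\otimes G\to G\otimes F$; (b) that $R_{-,-}$ is natural in both arguments; (c) the two hexagon axioms from \eqref{eq:braidedmonoidalcategory}; and (d) the unit compatibility.

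For (a), I need to check that for every morphism $f\in\Hom_{\mathcal{B}_{d+d'}}(V^{\otimes d+d'},W^{\otimes d+d'})$ in $\Gamma^{d+d'}_{q,e}\mathcal V$, the square with top and bottom arrows $(F\otimes G)(f)$ and $(G\otimes F)(f)$, and sides $R_{F(V),G(V)}$ and $R_{F(W),G(W)}$, commutes. Using the equivalent characterization via the comodule maps $F''_{V,W}$ and $G''_{V,W}$ (Proposition \ref{equivalentcharacterization}), this is precisely the content of Lemma \ref{lemma:braiding2}, which was stated exactly with this application in mind. So (a) is essentially immediate from that lemma.

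For (b), given natural transformations $\alpha:F\to F'$ and $\beta:G\to G'$, one has to check that
\[
(\beta_V\otimes \alpha_V)\circ R_{F(V),G(V)} = R_{F'(V),G'(V)}\circ(\alpha_V\otimes\beta_V),
\]
but Proposition \ref{lem:naturalitytocomodule} tells us $\alpha_V$ and $\beta_V$ are $A_q(V,V)$-comodule maps, hence $A_q(n,n)$-comodule maps via $\psi_V$, and the braiding on $A_q(n,n)$-comodules is natural with respect to comodule homomorphisms. For (c), the two hexagon identities are equalities of natural transformations, and evaluated at each $V$ they reduce to the hexagon identities for $R_{F(V),G(V)}$ in the braided monoidal category of $A_q(n,n)$-comodules, which hold by Proposition \ref{prop:Aqnnbraidedmonoidalcategory}. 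The unit axiom $R_{F,k}$ and $R_{k,F}$ reducing to the identity constraints is similarly a pointwise consequence of the same fact, since evaluating the constant functor $k$ at any $V$ gives the trivial $A_q(n,n)$-comodule.

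The main obstacle I anticipate is (a): one has to make sure the ``intertwining'' property of the pointwise $R$-matrices with respect to arbitrary $\mathcal{B}_{d+d'}$-invariant morphisms $V^{\otimes(d+d')}\to W^{\otimes(d+d')}$ really translates into the commutativity of the natural transformation square. The key ingredient that resolves this is Proposition \ref{lemma:braiding1}, which identifies the two a priori different braidings (one from $A_q(n,n)$, one from $A_q(V,V)$), together with Lemma \ref{lemma:braiding2}. Once these are in place, the rest of the verification is a routine pointwise reduction to Proposition \ref{prop:Aqnnbraidedmonoidalcategory}.
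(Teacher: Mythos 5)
Your proposal is correct and follows essentially the same structure as the paper's own proof: (a) morphism-hood in $\mathcal{P}_{q,e}$ reduces via Proposition~\ref{equivalentcharacterization} to Lemma~\ref{lemma:braiding2}, (b) naturality in $F,G$ follows from Proposition~\ref{lem:naturalitytocomodule} combined with Proposition~\ref{lemma:braiding1}, and (c),(d) the hexagon and unit axioms reduce pointwise to Proposition~\ref{prop:Aqnnbraidedmonoidalcategory}. The only cosmetic difference is that in step (b) the paper spells out the comodule-naturality by unwinding the explicit formula for $R_{F(V),G(V)}$ in terms of $\mathcal{R}^V$ and $\tau$, whereas you invoke naturality of the braiding on $A_q(n,n)$-comodules more abstractly; both are fine.
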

\begin{proof}
We first show that the braiding is a well defined morphism in $\Hom (F \otimes G, G \otimes F)$. This is equivalent to showing the commutativity of the following diagram 
\[ \begin{tikzpicture} 
  \matrix (m) [matrix of math nodes,row sep=3em,column sep=5em,minimum width=2em]
  {
     F(V) \otimes G(V) & F(W) \otimes G(W) \\
     G(V) \otimes F(V) & G(W) \otimes F(W) \\};
  \path[-stealth]
    (m-1-1) edge node [left] {$R_{F(V), G(V)}$} (m-2-1) 
            edge node [above] {$(F\otimes G) (f)$} (m-1-2)
    (m-2-1.east|-m-2-2) edge node [below] {$(G \otimes F) (f)$} (m-2-2)
    (m-1-2) edge node [right] {$R_{F(W), G(W)}$} (m-2-2) ;
\end{tikzpicture}\]
for all $e$-Hecke pairs $V, W$ and any $f \in \Hom (V, W)$. The commutativity of the diagram above for all $f$ is equivalent to the commutativity of the following diagram 
\[ \begin{tikzpicture} 
  \matrix (m) [matrix of math nodes,row sep=3em,column sep=5em,minimum width=2em]
  {
     S_q(V,W;d+d') \otimes F(V) \otimes G(V) & F(W) \otimes G(W) \\
     S_q(V,W;d+d') \otimes G(V) \otimes F(V) & G(W) \otimes F(W) \\};
  \path[-stealth]
    (m-1-1) edge node [left] {$1 \otimes R_{F(V), G(V)}$} (m-2-1)
            edge node [above] {} (m-1-2)
    (m-2-1.east|-m-2-2) edge node [below] {} (m-2-2)
    (m-1-2) edge node [right] {$R_{F(W), G(W)}$} (m-2-2) ;
\end{tikzpicture}\]
where the horizontal maps are given by $f \otimes v^F \otimes v^G \mapsto (F \otimes G)(f) (v^F \otimes v^G)$. 

The commutativity of the second diagram is now equivalent to the commutativity of the following diagram by the definition of $S_q(V,W;d+d')$ as the dual of $A_q(W,V)_{d+d'}$. 
\[ \begin{tikzpicture} 
  \matrix (m) [matrix of math nodes,row sep=3em,column sep=11em,minimum width=2em]
  {
     F(V) \otimes G(V) & F(W) \otimes G(W) \otimes A_q(W,V) \otimes A_q(W,V) \\
     G(V) \otimes F(V) & G(W) \otimes F(W) \otimes A_q(W,V) \otimes A_q(W,V) \\};
  \path[-stealth]
    (m-1-1) edge node [left] {$R_{F(V), G(V)}$} (m-2-1)
            edge node [above] {$(1\otimes \tau \otimes 1) F_{V,U}'' \otimes G_{V,U}''$} (m-1-2)
    (m-2-1.east|-m-2-2) edge node [below] {$(1\otimes \tau \otimes 1)G_{V,U}'' \otimes F_{V,U}''$} (m-2-2)
    (m-1-2) edge node [right] {$R_{F(W), G(W)} \otimes 1$} (m-2-2) ;
\end{tikzpicture}\]
which is commutative by Lemma \ref{lemma:braiding2}. This completes the proof of naturality of the braiding. 

Now we show that $R_{F, G}$ is a natural transformation, namely we need to show that for $f: F \to F'$ and $g: G \to G'$ we have \[ R_{F',G'} (f\otimes g) = (g\otimes f) R_{F,G} \]
The relation above holds when applied to any $V$ because we can write $R_{F,G}(V) = R_{F(V), G(V)}$ as the composition $(1 \otimes 1 \otimes \mathcal{R}^V)(1 \otimes \tau \otimes 1)(\Delta_{G(V)} \otimes \Delta_{F(V)})\tau$ and $f_V \otimes g_V$ commutes with each factor of that composition except for $\tau$ which switches $f_V$ and $g_V$ by Proposition \ref{lem:naturalitytocomodule}. Notice that in the equation above we use $\mathcal{R}^V$, the universal $R$-matrix of the coalgebra $A_q(V,V)$, instead of $\mathcal{R}$. We can do this because of Proposition \ref{lemma:braiding1}. 

The natural transformation $R_{F,G}$ is an isomorphism because $R_{F(V), G(V)}$ is an isomorphism for every $V$.
We show that it satisfies equations \eqref{eq:braidedmonoidalcategory}. To prove the first property $\gamma_{V \otimes  W, U} =  (\gamma_{V,U}\otimes 1) (1 \otimes \gamma_{W, U})$ we note that it is equivalent to  
\[ R_{F \otimes  G, H}(V) =  (R_{F,H}(V)\otimes 1_G) (1_F \otimes R_{G, H}(V))\]
for every $e$-Hecke pair $V$. This can be rewritten as 
\[ R_{F(V) \otimes  G(V), H(V)} =  (R_{F(V),H(V)}\otimes 1_{G(V)}) (1_{F(V)} \otimes R_{G(V), H(V)})\]
and follows immediately from Proposition \ref{prop:Aqnnbraidedmonoidalcategory}.

The third property $\tilde{r}_V \gamma_{I, V} = \tilde{l}_V$ can be rewritten as 
\[ r_V R_{k, F(V)} = l_{F(V)} \]
which again follows immediately from Proposition \ref{prop:Aqnnbraidedmonoidalcategory}.

The rest of the properties properties follow by the same argument as above.   
\end{proof}

\begin{rem}
One can similarly show the existence of a braiding for the monoidal category $\mathcal{P}^{\circ, d}_{q,e}$.
\end{rem}


\section{Composition of quantum polynomial functors}\label{section:composition}

Given two linear functors $F,G$ between arbitrary $k$-linear categories, one can define the composition $F\circ G$ if the domain of $F$ agrees with the codomain of $G$, and then $F\circ G$ is a linear functor. 
The quantum polynomial functors, as in the definition presented, have (up to equivalence) $\mathcal V$ as their codomain, which does not match the domains of quantum polynomial functors. We have to endow the image of $F$ with some additional structure.


\begin{prop} \label{prop:composition}
Given $V$ an $e$-Hecke pair and $F$ a quantum polynomial functor
of degree $d$, then $F(V)$ is also an $A_q(n,n)$-comodule. It is a $de$-Hecke pair. 
\end{prop}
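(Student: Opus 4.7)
The plan is to assemble the $A_q(n,n)$-comodule structure on $F(V)$ by combining two ingredients already built in the preliminaries: the internal $A_q(V,V)$-comodule structure on $F(V)$ given by Lemma \ref{comodule2}, and the bialgebra homomorphism $\psi_V : A_q(V,V) \to A_q(n,n)$ constructed in Lemma \ref{psiV}. Pulling back the $A_q(V,V)$-coaction along $\psi_V$ will automatically give the desired $A_q(n,n)$-coaction, and the grading statement will follow by tracking degrees of generators.

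More precisely, I would proceed as follows. First, by Lemma \ref{comodule2}, the map $\Delta^V_{F(V)} : F(V) \to F(V) \otimes A_q(V,V)$ makes $F(V)$ into an $A_q(V,V)$-comodule, and by construction its image lies in $F(V) \otimes A_q(V,V)_d$ since $F$ has degree $d$. Second, define
\[
\Delta_{F(V)} := (1 \otimes \psi_V) \circ \Delta^V_{F(V)} : F(V) \to F(V) \otimes A_q(n,n).
\]
Since $\psi_V$ is a bialgebra homomorphism (Lemma \ref{psiV}), the standard argument for pullback of comodule structures along coalgebra maps shows that $\Delta_{F(V)}$ is a coaction, i.e.\ it satisfies coassociativity and counitality. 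Thus $F(V)$ becomes an $A_q(n,n)$-comodule.

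Next, I would verify the degree bound. By Definition \ref{eHeckepair}, the fact that $V$ is an $e$-Hecke pair means $t^V_{ji} \in A_q(n,n)_e$ for all $i,j$. Since $\psi_V$ is defined on generators by $\psi_V(x^V_{ji}) = t^V_{ji}$ and is an algebra homomorphism, a product of $d$ generators of $A_q(V,V)$ is sent to a product of $d$ elements in $A_q(n,n)_e$, which lies in $A_q(n,n)_{de}$. In other words, $\psi_V\bigl(A_q(V,V)_d\bigr) \subseteq A_q(n,n)_{de}$. Combining this with the fact that $\Delta^V_{F(V)}$ lands in $F(V) \otimes A_q(V,V)_d$ yields
\[
\Delta_{F(V)}\bigl(F(V)\bigr) \subseteq F(V) \otimes A_q(n,n)_{de},
\]
so $F(V)$ is a $de$-Hecke pair.

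The proof is essentially bookkeeping; no part is genuinely difficult once Lemmas \ref{comodule2} and \ref{psiV} are in hand. The only subtle point worth stating carefully is that the image of $F''_{V,V}$ is concentrated in the degree $d$ part of $A_q(V,V)$ (which is immediate from the definition of a degree $d$ functor via Proposition \ref{equivalentcharacterization}), so that one really lands in $A_q(n,n)_{de}$ after applying $\psi_V$.
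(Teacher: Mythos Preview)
Your proof is correct and follows essentially the same approach as the paper: use Lemma \ref{comodule2} to get the $A_q(V,V)$-coaction $\Delta^V_{F(V)}$, then push forward along the bialgebra map $\psi_V$ from Lemma \ref{psiV} to obtain the $A_q(n,n)$-coaction $(1\otimes\psi_V)\Delta^V_{F(V)}$. The paper simply says ``it is easy to see'' for the $de$-Hecke pair claim, whereas you have spelled out the degree-tracking argument explicitly; this is a welcome addition but not a different method.
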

\begin{proof}

By Lemma $\ref{comodule2}$, $F(V)$ is an $A_q(V,V)$-comodule with coaction 
$\Delta^V_{F(V)}$. Since $\psi_V$ is a coalgebra homomorphism by Lemma \ref{psiV}, the composition 
$(1 \otimes  \psi_V) \Delta^V_{F(V)}$ makes $F(V)$ into an $A_q(n,n)$-comodule. 
It is easy to see from the way $(1 \otimes  \psi_V) \Delta^V_{F(V)}$ is defined that $F(V)$ is a $de$-Hecke pair. 
This completes the proof of the statement. 
\end{proof}

We now explain the composition of quantum polynomials functors.
Let $G \in \mathcal{P}^{d_2}_{q, e}$ and $F \in \mathcal{P}^{d_1}_{q,d_2 e}$. We define $F \circ G \in \mathcal{P}_{q,e}^{d_1 d_2}$
as follows: on objects $V \in \Gamma^{d_1 d_2}_{q,e} \mathcal{V}$ we let 
\[(F \circ G) (V) = F(G(V)).\] 
This composition makes sense because $G(V)$ is an $A_q(n,n)_{d_2 e}$-comodule by Proposition \ref{prop:composition}. Since $F$  is a quantum polynomial functor of degree $d_1$, we have maps 
\[ F_{G(V),G(W)}'' : F(G(V)) \to F(G(W)) \otimes A_q(G(W),G(V))_{d_1} \]
that satisfy the commutation relations in Proposition \ref{equivalentcharacterization}. $G$ is also a quantum polynomial functor so we have maps:
\[ G_{V,W}'' : G(V) \to G(W) \otimes A_q(W,V)_{d_2}. \]

Define $(F \circ G)_{V,W}'' : F(G(V))\to F(G(W))\otimes A_q(W,V)$ as 
\[(F \circ G)_{V,W}'' := (1 \otimes \psi^G_{V,W}) \circ F_{G(V),G(W)}''\]
where $\psi_{V,W}^G$ is defined in Lemma \ref{psigeneral}.

\begin{thm}\label{thm:composition}
$(F \circ G)''$ satisfies properties \eqref{polyfunctor1}, \eqref{polyfunctor2} in Proposition \ref{equivalentcharacterization}. Therefore 
$F \circ G$ is a well-defined quantum polynomial functor in $\mathcal{P}_{q,e}^{d_1 d_2}$.
\end{thm}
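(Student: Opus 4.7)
The plan is to verify the two conditions \eqref{polyfunctor1} and \eqref{polyfunctor2} directly from the defining formula $(F\circ G)''_{V,W}=(1\otimes\psi^G_{W,V})\circ F''_{G(V),G(W)}$. Both diagrams reduce, after unpacking the definition, to the analogous diagrams for the quantum polynomial functor $F$ (applied to the $d_2e$-Hecke pairs $G(V), G(W), G(U)$ produced by Proposition~\ref{prop:composition}), combined with Lemma~\ref{psiG}, which is precisely the statement that the maps $\psi^G_{-,-}$ are compatible with the comultiplication and the counit on the bialgebras $A_q(-,-)$.

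For diagram \eqref{polyfunctor1}, I would chase around the square from $(F\circ G)(V)$. Unfolding the definition, and noting that the maps $1\otimes \psi^G_{-,-}$ and $F''_{-,-}\otimes 1$ involved act on disjoint tensor factors so they can be freely reordered, the two compositions to be compared simplify respectively to $(1\otimes (\Delta_{U,W,V}\circ \psi^G_{U,V}))\circ F''_{G(V),G(U)}$ and $(1\otimes \psi^G_{U,W}\otimes \psi^G_{W,V})\circ (F''_{G(W),G(U)}\otimes 1)\circ F''_{G(V),G(W)}$. Diagram \eqref{polyfunctor1} for $F$ (with $V, W, U$ replaced by $G(V), G(W), G(U)$) rewrites $(F''_{G(W),G(U)}\otimes 1)\circ F''_{G(V),G(W)}$ as $(1\otimes\Delta_{G(U),G(W),G(V)})\circ F''_{G(V),G(U)}$, and then equation \eqref{eq:psiG1} of Lemma~\ref{psiG} identifies $(\psi^G_{U,W}\otimes\psi^G_{W,V})\circ\Delta_{G(U),G(W),G(V)}$ with $\Delta_{U,W,V}\circ \psi^G_{U,V}$. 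Concatenating these two identifications converts one expression into the other.

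For diagram \eqref{polyfunctor2}, the computation is shorter. One writes $(1\otimes e_{A_q(V,V)})\circ (F\circ G)''_{V,V} = (1\otimes (e_{A_q(V,V)}\circ \psi^G_{V,V}))\circ F''_{G(V),G(V)}$, then applies equation \eqref{eq:psiG2} of Lemma~\ref{psiG} to replace $e_{A_q(V,V)}\circ \psi^G_{V,V}$ by $e_{A_q(G(V),G(V))}$, and finally invokes \eqref{polyfunctor2} for $F$ at the $d_2e$-Hecke pair $G(V)$ to conclude that the composition is the identity on $F(G(V))$. The only delicate point in the whole argument is bookkeeping: tracking which tensor slot each operator acts on, and justifying that operators on disjoint factors commute past one another. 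There is no real obstacle here; the substantive content of the proof has already been packaged into Lemma~\ref{psiG}, so once that lemma is in hand, Theorem~\ref{thm:composition} is essentially a diagrammatic unwinding.
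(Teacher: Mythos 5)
Your proof is correct and follows the same strategy as the paper: in both cases, diagram \eqref{polyfunctor1} for $F\circ G$ is reduced to \eqref{polyfunctor1} for $F$ evaluated at $G(V),G(W),G(U)$ together with equation \eqref{eq:psiG1} of Lemma~\ref{psiG} (plus commutativity of operators acting on disjoint tensor slots), and \eqref{polyfunctor2} is reduced to \eqref{polyfunctor2} for $F$ at $G(V)$ together with \eqref{eq:psiG2}. The paper presents this as a subdivided commutative diagram while you present it as an algebraic chase, but the content is identical.
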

\begin{proof}

Diagram \eqref{polyfunctor1} for $(F \circ G)$ is equivalent to the exterior square of the following diagram:

\hspace{-1cm}
\footnotesize
\begin{tikzpicture}
  \matrix (m) [matrix of math nodes,row sep=3em,column sep=3em,minimum width=3em]
  {
     (F\circ G)(V) & (F \circ G)(U) \otimes A_q(G(U),G(V)) & (F \circ G)(U) \otimes A_q(U, V) \\
     (F\circ G)(W) \otimes A_q(G(W),G(V)) & (F\circ G)(U) \otimes A_q(G(U),G(W)) \otimes A_q(G(W),G(V)) &  \\
     (F\circ G)(W) \otimes A_q(W,V) &  & (F\circ G)(U) \otimes A_q(U,W) \otimes A_q(W,V) \\};
  \path[-stealth]
    (m-1-1) edge node [left] { $F_{G(V),G(W)}''$} (m-2-1)
            edge node [above] {$F_{G(V),G(U)}''$} (m-1-2)
    (m-2-1.east|-m-2-2) edge node [below] {$F_{G(W),G(U)}''\otimes 1$} (m-2-2)
    (m-1-2) edge node [right] {$1 \otimes \Delta_{G(U),G(W),G(V)}$} (m-2-2)
    (m-2-2) edge node [midway,above] {\,\,\,\,\,\,\,\,\,\,\,\,\,$1 \otimes \psi_{U,W}^G \otimes \psi_{W,V}^G$} (m-3-3)
    (m-1-2) edge node [above] {$1 \otimes \psi_{U,V}^G$} (m-1-3)
    (m-2-1) edge node [left] {$1 \otimes \psi_{W,V}^G$} (m-3-1)
    (m-1-3) edge node [right] {$1 \otimes \Delta_{U,W,V}$} (m-3-3)
    (m-3-1) edge node [below] {$(F\circ G)_{W,U}'' \otimes 1$} (m-3-3);
\end{tikzpicture}
\normalsize

We show the commutativity of the exterior square by showing the commutativity of all three interior quadrilaterals. The commutativity of the top left diagram follows from the fact that $F$ is a quantum polynomial functor. The bottom left diagram follows from the fact that the horizontal arrows modify only the left component of the tensor product, while the vertical arrows modify the right component only. The commutativity of the top right diagram follows from the fact that $\psi^G_{U,V}$ satisfies equation \eqref{eq:psiG1}. 

Diagram \eqref{polyfunctor2} for $F\circ G$ is the exterior triangle of the following diagram:

\small
\begin{center}
\begin{tikzpicture}
  \matrix (m) [matrix of math nodes,row sep=5em,column sep=8em,minimum width=2em]
  {
     (F\circ G)(V) & (F\circ G)(V) \otimes A_q(G(V),G(V)) & (F\circ G)(V)\otimes A_q(V,V)\\
     (F\circ G)(V)\otimes k  & &  \\};
  \path[-stealth]
    (m-1-1) edge node [above] {$F_{G(V),G(V)}''$} (m-1-2)
    (m-1-1) edge node [left] {$1$} (m-2-1)
    (m-1-2) edge node [above]{\hspace{-2cm}$1 \otimes e_{A_q(G(V),G(V))}$} (m-2-1)
    (m-1-2) edge node [above] {$1\otimes \psi^G_{V,V}$} (m-1-3)
    (m-1-3) edge node [below] {\hspace{3cm}$1\otimes e_{A_q(V,V)}$} (m-2-1);
\end{tikzpicture}
\end{center}
\normalsize

The left triangle commutes by \eqref{polyfunctor2} and the right triangle by \eqref{eq:psiG2}.

\end{proof}

\section{Representability}\label{sec:finitegeneration}
Recall the divided power functor $\Gamma_{q,e}^{d,V}\in \pde$:
\[\Gamma_{q,e}^{d,V}(W)=\Hom_{\mathcal{B}_d}(V^{\otimes d},W^{\otimes d})\] defined for each $e$-Hecke pair $V\in \Gamma^d_{q,e}\mathcal V$. 

\begin{prop}\label{prop:projective}(Yoneda's Lemma)
For any $e$-Hecke pair $W$, the functor $\Gamma^{d,W}_{q,e} \in \mathcal{P}^d_{q,e}$ represents the evaluation functor $\mathcal{P}^d_{q,e} \to \mathcal{V}$ given by $F \mapsto F(W)$; therefore $\Gamma^{d,W}_{q,e}$ is a projective object in $\mathcal{P}^d_{q,e}$. 
\end{prop}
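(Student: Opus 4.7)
The plan is to establish the standard Yoneda-style natural isomorphism
\[\Hom_{\pde}(\Gamma^{d,W}_{q,e},F)\iso F(W)\]
for any $F\in\pde$, and then deduce projectivity from the fact that evaluation at $W$ is an exact functor $\pde\to\mathcal V$.

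First I would construct the two maps implementing the bijection. In one direction, given a natural transformation $\alpha\colon \Gamma^{d,W}_{q,e}\to F$, since $\Gamma^{d,W}_{q,e}(W)=\Hom_{\mathcal{B}_d}(W^{\otimes d},W^{\otimes d})$ contains the identity, I would send $\alpha$ to
\[\Phi(\alpha):=\alpha_W(\id_{W^{\otimes d}})\in F(W).\]
In the opposite direction, given $x\in F(W)$, I define $\Psi(x)\colon \Gamma^{d,W}_{q,e}\to F$ pointwise by
\[\Psi(x)_V\colon \Hom_{\mathcal{B}_d}(W^{\otimes d},V^{\otimes d})\to F(V),\qquad f\longmapsto F(f)(x),\]
where $F(f)$ is the image under $F_{W,V}\colon \Hom_{\Gamma^d_{q,e}\mathcal V}(W,V)\to \Hom(F(W),F(V))$.

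Next I would verify that $\Psi(x)$ is natural in $V$, which reduces to the functoriality of $F$ applied to the composition $V\xrightarrow{g} V'$ with elements of $\Hom_{\mathcal{B}_d}(W^{\otimes d},V^{\otimes d})$. Then I would check $\Phi\circ\Psi=\id$, which is immediate from $F(\id_{W^{\otimes d}})=\id_{F(W)}$, and $\Psi\circ\Phi=\id$, which follows by applying the naturality square of a given $\alpha$ with respect to an arbitrary $f\in \Hom_{\mathcal{B}_d}(W^{\otimes d},V^{\otimes d})$ to the element $\id_{W^{\otimes d}}$: the square forces $\alpha_V(f)=F(f)(\alpha_W(\id))=\Psi(\Phi(\alpha))_V(f)$. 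Naturality of the isomorphism in $F$ is then routine.

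Finally, projectivity is an immediate corollary. The category $\pde$ is abelian, with kernels and cokernels computed pointwise (on each object $V$ in $\Gamma^d_{q,e}\mathcal V$), so the evaluation functor $\mathrm{ev}_W\colon \pde\to\mathcal V$, $F\mapsto F(W)$, is exact. Since $\Hom_{\pde}(\Gamma^{d,W}_{q,e},-)\cong \mathrm{ev}_W$ is exact, $\Gamma^{d,W}_{q,e}$ is projective. The main subtlety I anticipate is not in Yoneda itself, which goes through verbatim, but in being precise that the naturality of $\alpha$ is tested against morphisms from $W$ to arbitrary $V$, for which the space $\Gamma^{d,W}_{q,e}(V)=\Hom_{\mathcal{B}_d}(W^{\otimes d},V^{\otimes d})$ is precisely the relevant $\Hom$-space in $\Gamma^d_{q,e}\mathcal V$; once this identification is recorded the argument is formal.
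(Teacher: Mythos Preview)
Your proof is correct and follows essentially the same approach as the paper: both construct the Yoneda bijection via $\alpha\mapsto\alpha_W(\id)$ and its inverse $x\mapsto(f\mapsto F(f)(x))$, and verify they are mutual inverses using the naturality square of $\alpha$. Your justification of projectivity via exactness of the evaluation functor is slightly more explicit than the paper's, which simply states the conclusion.
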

\begin{proof}
We need to show the existence of an isomorphism 
\[ \textnormal{Hom}_{\mathcal{P}^d_{q,e}}(\Gamma^{d,W}_{q,e}, F ) \to F(W)  \]
for any $F \in \mathcal{P}^d_{q,e}$. 
Define the map $\rho :  \textnormal{Hom}_{\mathcal{P}^d_{q,e}}(\Gamma^{d,W}_{q,e}, F ) \to F(W)$ by 
\[ \rho (f) = f_{W} (\textnormal{id}) \in F(W). \]
Let $\phi: F(W) \to  \textnormal{Hom}_{\mathcal{P}^d_{q,e}}(\Gamma^{d,W}_{q,e}, F )$ be defined as follows: for any $v \in F(W)$, $\phi(v)$ is the natural transformation such that $\phi(v) (U) :\Gamma^{d,W}_{q,e} (U) \to F(U)$ takes 
\[ g \in \textnormal{Hom}_{\mathcal{B}_d} ( (W)^{\otimes d}, U^{\otimes d}) \mapsto F_{W_{n}, U} (g) (v) \in F(U). \]

We now show that $\rho$ and $\phi$ are inverses to each other. Start with $v \in F(W)$. We have 
\[\rho(\phi(v)) = \phi(v) (W)(\textnormal{id}) = F_{W,W} (\textnormal{id}) (v) = v \]

Now let $f \in \textnormal{Hom}_{\mathcal{P}^d_{q,e}}(\Gamma^{d,W}_{q,e}, F )$. We want to show that \[\phi(\rho(f))_U(g)=f_U(g)\] for any $U\in \Gamma^d_{q,e}$ and $g\in \textnormal{Hom}_{\mathcal{B}_d} ( (W)^{\otimes d}, U^{\otimes d})$.
\begin{equation*}
\begin{split}
\phi(\rho(f))(U)(g)&=F_{W,U}(g)(f_{W}(\textnormal{id}))\\
&=f_U(\Gamma^{d,W}_{q,e}(g)(\textnormal{id}))\\
&=f_U(g).
\end{split}
\end{equation*}
The second equality follows from the commutativity of the following diagram 

\begin{equation}\label{projectiveobject}
\begin{tikzpicture} 
  \matrix (m) [matrix of math nodes,row sep=3em,column sep=11em,minimum width=2em]
  {
     \Gamma^{d,W}_{q,e}(W)& \Gamma^{d,W}_{q,e}(U) \\
     F(W) & F(U) \\};
  \path[-stealth]
    (m-1-1) edge node [left] {$f_{W}$} (m-2-1)
            edge node [above] {$(\Gamma^{d,W}_{q,e})_{W,U} (g)$} (m-1-2)
    (m-2-1.east|-m-2-2) edge node [below] {$F_{W,U}(g)$} (m-2-2)
    (m-1-2) edge node [right] {$f_U$} (m-2-2) ;
\end{tikzpicture}
\end{equation}
which holds because $f$ is a natural transformation. 

It follows that $\Gamma^{d,W}_{q,e}$ is a projective object in $\mathcal{P}^d_{q,e}$. 
\end{proof}

\begin{defn}\label{fgdef}
Let $W$ be an $e$-Hecke pair. The quantum polynomial functor $F \in \mathcal{P}^d_{q,e}$ is $W$-$\it{generated}$ if
for every $e$-Hecke pair $U$ the map
\[ F_{W,U}' : S_q(W,U;d) \otimes F(W) \to F(U) \]
is surjective. We say that the category $\pde$ is $\it{finitely}$ $\it{generated}$ if there is an $e$-Hecke pair $W$ such that every $F\in\pde$ is $W$-generated. 
\end{defn}

\begin{rem}
Note that $S_q(W,U;d)=\Gamma^{d,W}_{q,e}(U)$. So the map $F_{W,U}'$ above gives a surjection 
\[\Gamma_{q,e}^{d,W}\otimes F(W)\to F\]
in $\pde$, where $\Gamma_{q,e}^{d,W}\otimes F(W)$ is interpreted as the direct sum of $\dim F(W)$ copies of $\Gamma_{q,e}^{d,W}$ in $\pde$. 
Since $\Gamma_{q,e}^{d,W}$ is projective by Proposition \ref{prop:projective}, Definition \ref{fgdef} says that $\Gamma_{q,e}^{d,W}$ is a projective generator of the category $\pde$. 
\end{rem}

\begin{defn}
Given two objects $V,W\in \Gamma^d_{q,e}\mathcal V $, we say $V$ generates $W$ if the identity $\Id _{W^{\otimes d}}$ (which should be written $\Id_W$ if we view it as an identity in the category $\Gamma_{q,e}^d\mathcal V$) can be written as a linear combination of $\mathcal{B}_d$-homomorphisms which factor through $V^{\otimes d}$ (which should be stated as `in $\Gamma_{q,e}^d\mathcal V$, the identity on $W$ can be written as a linear combination of endomorphisms on $W$ that factor through $V$').
\end{defn}

\begin{prop}\label{equicon}
Let $V, W\in \Gamma^d_{q,e}\mathcal V $. Assume that any indecomposable summand of $W^{\otimes d}$ (as a $\mathcal{B}_d$-module) is isomorphic to a direct summand of $V^{\otimes d}$. Then $V$ generates $W$.
\end{prop}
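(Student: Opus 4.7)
The plan is to exploit the Krull--Schmidt-style decomposition of $W^{\otimes d}$ as a $\mathcal B_d$-module and realize the identity on $W^{\otimes d}$ as a sum of projectors, each of which can be rerouted through $V^{\otimes d}$ via the assumed embeddings of summands.

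Concretely, I would write $W^{\otimes d}=\bigoplus_{i\in I} M_i$ as a decomposition into indecomposable $\mathcal B_d$-modules, with structural inclusions $j_i\colon M_i\hookrightarrow W^{\otimes d}$ and projections $p_i\colon W^{\otimes d}\twoheadrightarrow M_i$ satisfying $\sum_i j_i p_i=\Id_{W^{\otimes d}}$ and $p_i j_i=\Id_{M_i}$. By hypothesis, for every $i$ there exist $\mathcal B_d$-module maps $\iota_i\colon M_i\to V^{\otimes d}$ and $\pi_i\colon V^{\otimes d}\to M_i$ with $\pi_i\iota_i=\Id_{M_i}$ (pick any isomorphism of $M_i$ with a direct summand of $V^{\otimes d}$ and compose with the inclusion/projection of that summand). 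Then each composition $f_i:=j_i\pi_i\iota_i p_i\colon W^{\otimes d}\to W^{\otimes d}$ equals the projector $j_i p_i$, and by construction it factors as
\[
W^{\otimes d}\xrightarrow{\iota_i p_i}V^{\otimes d}\xrightarrow{j_i\pi_i}W^{\otimes d}.
\]
Summing over $i$ gives
\[
\sum_{i\in I}f_i=\sum_{i\in I}j_i p_i=\Id_{W^{\otimes d}},
\]
which exhibits $\Id_{W^{\otimes d}}$ as a linear combination (in fact, a sum) of $\mathcal B_d$-endomorphisms of $W^{\otimes d}$ that factor through $V^{\otimes d}$.

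The only genuine subtlety is that the decomposition $W^{\otimes d}=\bigoplus_i M_i$ must be available and the hypothesis must be interpreted as referring to the Krull--Schmidt summands of this decomposition; but since we are working with finite-dimensional modules over a $k$-algebra (the image of $k\mathcal B_d$ in $\End W^{\otimes d}$), Krull--Schmidt applies and the indecomposable decomposition exists, so the hypothesis furnishes the maps $\iota_i,\pi_i$ for every $i$. No other obstacle arises: the argument is the standard "projector through a direct summand" trick, and it terminates after one step.
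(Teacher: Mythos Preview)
Your proof is correct and follows essentially the same approach as the paper's own argument: decompose $W^{\otimes d}$ into indecomposable $\mathcal B_d$-summands $M_i$, use the hypothesis to factor each projector $j_ip_i$ through $V^{\otimes d}$, and sum to recover $\Id_{W^{\otimes d}}$. Your version spells out the inclusions/projections more explicitly and justifies the existence of the indecomposable decomposition, but the underlying idea is identical.
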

\begin{proof}
Denote by $M_i$ all the indecomposable summands appearing in a fixed indecomposable summand decomposition of $W^{\otimes d}$. For each $M_i$, there is a summand $N_i$ in $V^{ \otimes d}$ that is isomorphic to $M_i$ as $\mathcal{B}_d$-modules. Let $f_i$ be a $\mathcal{B}_d$-map that maps the summand $M_i \subset W^{\otimes d} \to N_i \subset V^{ \otimes d}$ and $g_i$ a $\mathcal{B}_d$-map that maps $N_i \subset V^{\otimes d} \to M_i \subset W^{\otimes d}$. Then the sum of $g_i \circ f_i$ is the identity on $W^{\otimes d}$. 
\end{proof}
\begin{lem}\label{generatorgivessurjective}
If $V$ generates $W$, then the natural map \[F(V)\otimes \Gamma_q^{d,V}(W)\to F(W)\] is surjective for any $F\in\pde$.
\end{lem}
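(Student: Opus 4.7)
The plan is to use the hypothesis that $V$ generates $W$ to decompose the identity, and then exploit the functoriality of $F$ to produce an explicit preimage in $F(V)\otimes \Gamma_q^{d,V}(W)$ for any element of $F(W)$.

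First, I would unpack the natural map in question. Since $\Gamma_q^{d,V}(W)=\Hom_{\mathcal{B}_d}(V^{\otimes d},W^{\otimes d})=S_q(V,W;d)$, the natural map is
\[
F(V)\otimes S_q(V,W;d)\to F(W),\qquad v\otimes g\mapsto F_{V,W}(g)(v),
\]
which is essentially $F'_{V,W}$ (up to swapping tensor factors).

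Next I would invoke the generation hypothesis: there exist $f_j\in\Hom_{\mathcal{B}_d}(W^{\otimes d},V^{\otimes d})$ and $g_j\in\Hom_{\mathcal{B}_d}(V^{\otimes d},W^{\otimes d})$ with
\[
\Id_{W^{\otimes d}}=\sum_j g_j\circ f_j.
\]
Applying the linear map $F_{W,W}:\Hom_{\mathcal{B}_d}(W^{\otimes d},W^{\otimes d})\to\End(F(W))$ and using that $F$ is a functor, so $F_{W,W}(\Id_W)=\Id_{F(W)}$ and $F_{W,W}(g_j\circ f_j)=F_{V,W}(g_j)\circ F_{W,V}(f_j)$, we obtain
\[
\Id_{F(W)}=\sum_j F_{V,W}(g_j)\circ F_{W,V}(f_j).
\]

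Finally, given any $w\in F(W)$, I would set $v_j:=F_{W,V}(f_j)(w)\in F(V)$ and observe
\[
w=\Id_{F(W)}(w)=\sum_j F_{V,W}(g_j)(v_j),
\]
which exhibits $w$ as the image of $\sum_j v_j\otimes g_j\in F(V)\otimes \Gamma_q^{d,V}(W)$ under the natural map. This proves surjectivity. There is no serious obstacle here; the argument is a direct application of $k$-linearity and functoriality of $F$, with the generation hypothesis supplying the decomposition of the identity to which functoriality is applied.
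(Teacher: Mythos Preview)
Your proof is correct and follows essentially the same approach as the paper: apply $F$ to the factorization $\Id_{W^{\otimes d}}=\sum_j g_j\circ f_j$ to obtain $\Id_{F(W)}=\sum_j F(g_j)\circ F(f_j)$, from which surjectivity is immediate. Your version is simply more explicit in writing out the preimage $\sum_j v_j\otimes g_j$, whereas the paper leaves this last step implicit.
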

\begin{proof}
Since there are maps $f_i, g_i$ such that $\sum_i g_i \circ f_i = \text{Id}_{W^{\otimes d}}$, by applying $F$ we obtain $\sum_i F(g_i) \circ F(f_i) = \text{Id}_{F(W)}$. This implies that the natural map $F(V) \otimes \text{Hom}_{\pde} (V,W) \to F(W)$ is surjective. 
\end{proof}

We obtain an important corollary.
\begin{cor}\label{gammapg}
The functor $\Gamma_q^{d,V}$ is a projective generator in $\pde$ 
if $V$ generates $W$ for all $W\in \Gamma_{q,e}^d\mathcal V$.
\end{cor}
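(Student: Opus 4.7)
My plan is to derive the corollary as an immediate combination of three earlier ingredients: Yoneda (Proposition \ref{prop:projective}), which gives projectivity; Lemma \ref{generatorgivessurjective}, which converts the combinatorial condition ``$V$ generates $W$'' into a pointwise surjection; and the remark following Definition \ref{fgdef}, which identifies what it means for a divided power to be a projective generator of $\pde$.

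Concretely, first I would recall from Proposition \ref{prop:projective} that $\Gamma^{d,V}_{q,e}$ is a projective object in $\pde$, so the only content of the corollary is to show it is a generator. Fix any $F\in\pde$. For every $e$-Hecke pair $W$, the hypothesis that $V$ generates $W$ combined with Lemma \ref{generatorgivessurjective} produces a surjection
\[F(V)\otimes \Gamma^{d,V}_{q,e}(W)=F(V)\otimes\Hom_{\mathcal{B}_d}(V^{\otimes d},W^{\otimes d})\twoheadrightarrow F(W),\]
which is exactly the map $F'_{V,W}$ from Definition \ref{fgdef} after identifying $S_q(V,W;d)$ with $\Gamma^{d,V}_{q,e}(W)$ via Proposition \ref{prop:homspaceschuralgebra}. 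Thus $F$ is $V$-generated in the sense of Definition \ref{fgdef}, and by the remark following that definition this is the same as the existence of a surjection
\[\Gamma^{d,V}_{q,e}\otimes F(V)\twoheadrightarrow F\]
in $\pde$, where the left-hand side is a direct sum of $\dim F(V)$ copies of $\Gamma^{d,V}_{q,e}$. Since this holds for arbitrary $F$ and $\Gamma^{d,V}_{q,e}$ is projective, it is a projective generator, which is the conclusion.

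The only point requiring more than bookkeeping is confirming that the pointwise surjections in Lemma \ref{generatorgivessurjective} do assemble into a morphism in $\pde$, i.e.\ that the collection $\{F'_{V,W}\}_{W}$ is natural in $W$. But naturality is built into the very definition of a quantum polynomial functor through the coassociativity diagram \eqref{polyfunctor1}, which after dualizing via Proposition \ref{HY2.8} says exactly that $F'_{V,-}$ commutes with composition of $\mathcal B_d$-morphisms — i.e., it is a natural transformation $\Gamma^{d,V}_{q,e}\otimes F(V)\to F$. So I do not anticipate any real obstacle; the corollary is essentially a packaging of Proposition \ref{prop:projective} and Lemma \ref{generatorgivessurjective} with the observation (already made in the remark after Definition \ref{fgdef}) that ``$W$-generated for all $F$'' is equivalent to ``$\Gamma^{d,W}_{q,e}$ is a projective generator.''
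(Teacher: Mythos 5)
Your proof is correct and follows the same route the paper itself takes: projectivity from Yoneda (Proposition \ref{prop:projective}), pointwise surjectivity from Lemma \ref{generatorgivessurjective}, and the packaging observation from the remark after Definition \ref{fgdef}. The paper's own proof is just the one-line remark that the corollary ``follows immediately from the proof of Lemma \ref{generatorgivessurjective}''; you have simply spelled out the same implicit steps, including the (correct) naturality check.
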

\begin{proof}
This follows immediately from the proof of Lemma \ref{generatorgivessurjective}.
\end{proof}

\begin{lem}\label{karoubi}
If $V$ generates $W$ then $V$ generates any direct summand of $W$ as an $A_q(n,n)$-comodule.
\end{lem}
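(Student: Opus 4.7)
The plan is to use the projection and inclusion maps associated to the direct sum decomposition, observing that they are $\mathcal{B}_d$-equivariant because they come from $A_q(n,n)$-comodule maps, and the braiding is natural.

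First, write $W=W'\oplus W''$ as $A_q(n,n)$-comodules, where $W'$ is the given direct summand, and let $\iota:W'\inj W$ and $\pi:W\surj W'$ be the $A_q(n,n)$-comodule inclusion and projection, so that $\pi\circ\iota=\Id_{W'}$. By Proposition \ref{prop:Aqnnbraidedmonoidalcategory}, the braidings $R_{U_1,U_2}$ are natural in each slot with respect to $A_q(n,n)$-comodule morphisms, hence $\iota\otimes\iota$ (resp.\ $\pi\otimes\pi$) intertwines $R_{W'}$ with $R_W$. Consequently $\iota^{\otimes d}:W'^{\otimes d}\to W^{\otimes d}$ and $\pi^{\otimes d}:W^{\otimes d}\to W'^{\otimes d}$ are $\mathcal{B}_d$-equivariant.

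Next, unpack the hypothesis: since $V$ generates $W$, there exist $\mathcal{B}_d$-module maps $f_i:W^{\otimes d}\to V^{\otimes d}$ and $g_i:V^{\otimes d}\to W^{\otimes d}$ with $\sum_i g_i\circ f_i=\Id_{W^{\otimes d}}$. Set
\[f'_i:=f_i\circ\iota^{\otimes d}:W'^{\otimes d}\to V^{\otimes d},\qquad g'_i:=\pi^{\otimes d}\circ g_i:V^{\otimes d}\to W'^{\otimes d},\]
which are $\mathcal{B}_d$-equivariant by the previous paragraph. Then
\[\sum_i g'_i\circ f'_i=\pi^{\otimes d}\circ\Bigl(\sum_i g_i\circ f_i\Bigr)\circ\iota^{\otimes d}=\pi^{\otimes d}\circ\iota^{\otimes d}=(\pi\circ\iota)^{\otimes d}=\Id_{W'^{\otimes d}},\]
exhibiting $\Id_{W'^{\otimes d}}$ as a sum of $\mathcal{B}_d$-maps factoring through $V^{\otimes d}$, which is exactly the definition of $V$ generating $W'$.

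There is essentially no obstacle here; the only point that requires any care is the $\mathcal{B}_d$-equivariance of $\iota^{\otimes d}$ and $\pi^{\otimes d}$. This is where the assumption that the direct summand is taken as an $A_q(n,n)$-comodule (rather than merely as a vector space or a $\mathcal{B}_d$-module) is essential, since the $\mathcal{B}_d$-action on a tensor power is defined via the $R$-matrix coming from the comodule structure.
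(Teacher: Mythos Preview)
Your proof is correct and follows essentially the same approach as the paper: both use the comodule inclusion and projection, observe that their $d$-th tensor powers are $\mathcal{B}_d$-equivariant, and sandwich the given factorization of $\Id_{W^{\otimes d}}$ between them. Your justification of the $\mathcal{B}_d$-equivariance via naturality of the braiding (Proposition~\ref{prop:Aqnnbraidedmonoidalcategory}) is in fact slightly more precise than the paper's phrasing.
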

\begin{proof}

If $V$ generates $W$ and $W= W_1 \oplus W_2$ (as $A_q(n,n)$-comodules), then $V$ generates $W_1$. To see this, assume the existence of maps $f_i, g_i$ such that $\sum_i g_i \circ f_i = \text{Id}_{W^{\otimes d}}$. Since $W_1$ is a direct summand, there are inclusion and projection maps $i: W_1 \to W_1 \oplus W_2 = W$ and $p:W = W_1 \oplus W_2 \to W_1$. Then $i^{\otimes d}:W_1^{\otimes d} \to W^{\otimes d}$ and $p^{\otimes d}:W^{\otimes d} \to W_1^{\otimes d}$ are $\mathcal{B}_d$-maps because $\mathcal{B}_d$ acts via $R$-matrices which are $A_q(n,n)$-maps. Let $\bar f_i: W_1^{\td} \to V^{\td}, \bar g_i : V^{\td} \to W_1^{\td}$ be defined as $\bar f_i = f_i \circ i^{\td}$ and $\bar g_i = p^{\td} \circ g_i$, respectively. Then $\bar f_i, \bar g_i$ are $\mathcal{B}_d$-maps and $\sum_i \bar g_i \circ \bar f_i = p\td \circ \text{Id}_{W\td}\circ i\td = \text{Id}_{W_1\td}$. Therefore $W_1$ is generated by $V$. 
\end{proof}

The following lemma is a standard fact in quantum theory. 
 
\begin{lem}\label{slambda}
Given a composition $\lambda=(\lambda_1,\cdots,\lambda_n)$ of $d$, let $V_{q,\lambda}$ be the subspace of $V_n^{\otimes d}$ generated (as a vector space) by the vectors $v_{i_{\sigma (1)}}\otimes\cdots\otimes v_{i_{\sigma(d)}}$ where $v_{i_1}\otimes\cdots\otimes v_{i_d}=v_1^{\otimes \lambda_1}\otimes\cdots\otimes v_n^{\otimes\lambda_n},$ and $\sigma\in S_d$,
where $\{v_i\}$ is the standard basis of $V_n$. Then $V_{q,\lambda}$ is a direct summand of $V_n^{\otimes d}$ as a $\mathcal{B}_d$-module. 
\end{lem}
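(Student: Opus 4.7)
The plan is to recognize $V_{q,\lambda}$ as the $\lambda$-weight space of $V_n^{\otimes d}$ and then use the fact that the $R$-matrix preserves weight spaces.

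First, I would note that the set of vectors $v_{i_1}\otimes\cdots\otimes v_{i_d}$ in which the basis element $v_k$ appears exactly $\lambda_k$ times is precisely the set described in the statement (a fixed such vector together with its $S_d$-reorderings), and these vectors span $V_{q,\lambda}$. As $\lambda$ ranges over compositions of $d$ with $n$ parts (allowing zeros), these subspaces give a direct sum decomposition
\[ V_n^{\otimes d}=\bigoplus_\lambda V_{q,\lambda} \]
as vector spaces, simply because the standard basis vectors of $V_n^{\otimes d}$ partition according to the multiset of indices. Concretely, from the weights of the $U_q(\mathfrak{gl}_n)$-action on the defining representation one sees that $V_{q,\lambda}$ is the $\lambda$-weight space of $V_n^{\otimes d}$.

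Next, I would show that each generator $T_i$ of $\mathcal{B}_d$ preserves $V_{q,\lambda}$. By definition, $T_i$ acts as $1^{\otimes i}\otimes R_n \otimes 1^{\otimes d-i-1}$. Inspecting the formula \eqref{def:Rmatrix} for $R_n$, we see that on each simple tensor $v_a\otimes v_b$ the image is a linear combination of $v_a\otimes v_b$ and $v_b\otimes v_a$ only; in either case the multiset $\{a,b\}$ is unchanged. Hence applying $T_i$ to a basis vector of $V_{q,\lambda}$ produces a linear combination of basis vectors of $V_{q,\lambda}$. Thus each $V_{q,\lambda}$ is a $\mathcal{B}_d$-submodule.

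Finally, the projection $\pi_\lambda : V_n^{\otimes d}\twoheadrightarrow V_{q,\lambda}$ associated to the direct sum decomposition is the identity on $V_{q,\lambda}$ and zero on $V_{q,\mu}$ for $\mu\ne\lambda$. Since each $V_{q,\mu}$ is $\mathcal{B}_d$-stable, $\pi_\lambda$ is a $\mathcal{B}_d$-equivariant map, and together with the inclusion $V_{q,\lambda}\hookrightarrow V_n^{\otimes d}$ it exhibits $V_{q,\lambda}$ as a direct summand of $V_n^{\otimes d}$ in the category of $\mathcal{B}_d$-modules. There is essentially no obstacle here: the whole content is the elementary observation that the $R$-matrix preserves the multiset of tensor indices; the argument would fail only if $R_n$ introduced a term with a different multiset of indices, which an inspection of \eqref{def:Rmatrix} shows it does not.
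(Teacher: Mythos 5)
Your proof is correct and takes essentially the same approach as the paper: observe from the formula for $R_n$ that each generator $T_j$ preserves the multiset of tensor indices, hence stabilizes each $V_{q,\lambda}$, and then conclude from the direct sum decomposition over all $\lambda$ that each $V_{q,\lambda}$ is a $\mathcal{B}_d$-direct summand. The paper's proof is terser but identical in substance; your weight-space interpretation is a helpful gloss but not a new ingredient.
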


\begin{proof}
The generator $T_j$ of the braid group maps the vector $v=\cdots \otimes v_{i_j}\otimes v_{i_{j+1}} \otimes \cdots$ to a linear combination of $v$ and $v'=\cdots \otimes v_{i_{j+1}}\otimes v_{i_{j}} \otimes \cdots$. See equation (\ref{def:Rmatrix}). It follows that $T_i$ leaves $V_{q,\lambda}$ invariant. Since this is true for all $\lambda$, the submodule $V_{q, \lambda}$ is a direct summand of $V_n^{d}$ as $\mathcal{B}_{d}$-modules. 
\end{proof}

\begin{rem}
As a vector space, $V_{q,\lambda}$ is the same as the permutation module $M_\lambda$ in $V_n^{\otimes d}$ viewed as an $S_d$-module.
\end{rem}

If $W\in \Gamma_{q,e}^d\mathcal V$ is of the form $V_n^{\otimes e}$, then we can formulate an explicit sufficient condition for what generates $W$.

\begin{prop}\label{standardehecke}
The $e$-Hecke pair $V_{m}^{\otimes e}\in\Gamma_{q,e}^d\mathcal V$ generates $V_n^{\otimes e}$ for any $n\in \N$  if $m\geq de$.
\end{prop}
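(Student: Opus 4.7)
The plan is to invoke Proposition \ref{equicon}. Thus, I aim to show that every indecomposable $\mathcal{B}_d$-summand of $(V_n^{\otimes e})^{\otimes d}=V_n^{\otimes de}$, where the $\mathcal{B}_d$-action is the one induced from the $e$-Hecke structure on $V_n^{\otimes e}$ (factoring through the inclusion $\mathcal{H}_{d,e}\hookrightarrow \mathcal{H}_{de}$), is isomorphic to a $\mathcal{B}_d$-summand of $V_m^{\otimes de}$.

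First I apply Lemma \ref{slambda} with $d$ replaced by $de$, obtaining a decomposition $V_n^{\otimes de}\cong\bigoplus_\lambda V_{q,\lambda}$ as $\mathcal{B}_{de}$-modules, indexed by compositions $\lambda$ of $de$ with at most $n$ parts. Since $\mathcal{B}_d$ acts on $V_n^{\otimes de}$ through $\mathcal{H}_{d,e}\subset \mathcal{H}_{de}$, each $V_{q,\lambda}$ is also a $\mathcal{B}_d$-summand, so by the Krull--Schmidt theorem every indecomposable $\mathcal{B}_d$-summand of $V_n^{\otimes de}$ is isomorphic to a summand of some $V_{q,\lambda}$.

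The key step is the observation that, as a $\mathcal{B}_{de}$-module (and therefore as a $\mathcal{B}_d$-module via restriction), $V_{q,\lambda}$ depends only on the underlying partition $\lambda^+$ of $\lambda$. This is essentially the fact that each $V_{q,\lambda}$ is a $q$-permutation module for the Hecke algebra: the weight vector $v_\lambda=v_1^{\otimes\lambda_1}\otimes\cdots\otimes v_n^{\otimes\lambda_n}$ generates $V_{q,\lambda}$ over $\mathcal{H}_{de}$ with annihilator determined by the parabolic Hecke subalgebra corresponding to $\lambda^+$, giving $V_{q,\lambda}\cong M^{\lambda^+}$ in the sense of Dipper--James \cite{DipperJames}. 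Granted this, one concludes as follows: $\lambda^+$ has at most $de\leq m$ nonzero parts, so one can choose a composition $\mu$ of $de$ with at most $m$ parts satisfying $\mu^+=\lambda^+$; then $V_{q,\mu}$ is a $\mathcal{B}_{de}$-summand of $V_m^{\otimes de}$ (again by Lemma \ref{slambda}), and is $\mathcal{B}_d$-isomorphic to $V_{q,\lambda}$. Proposition \ref{equicon} then gives the claim.

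The main technical obstacle is the isomorphism $V_{q,\lambda}\cong V_{q,\mu}$ when $\lambda^+=\mu^+$. I expect to verify it directly by showing that both sides are cyclic $\mathcal{H}_{de}$-modules generated by $v_\lambda$ and $v_\mu$ respectively, with the same right-ideal annihilator generated by the elements $T_i-q$ for $i$ ranging over positions internal to a block of equal indices, using the explicit formula for $R_n$ in \eqref{def:Rmatrix}. A dimension count then forces these surjections from $M^{\lambda^+}$ to be isomorphisms.
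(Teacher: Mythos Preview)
Your proposal is correct and follows essentially the same route as the paper. Both arguments rest on Lemma~\ref{slambda} applied at level $de$, the observation that $V_{q,\lambda}$ depends only on the underlying partition $\lambda^+$ (so that it reappears as a summand of $V_m^{\otimes de}$ once $m\geq de$), and the restriction of the $\mathcal{B}_{de}$-action to $\mathcal{B}_d$ via $\mathcal{H}_{d,e}\subset\mathcal{H}_{de}$. The paper organizes this slightly differently---it first treats $e=1$ via Proposition~\ref{equicon}, then for general $e$ produces the $\mathcal{B}_{de}$-maps $f_i,g_i$ factoring the identity and restricts them directly to $\mathcal{B}_d$-maps, bypassing a second appeal to Proposition~\ref{equicon}---but the content is the same, and you in fact spell out the permutation-module isomorphism $V_{q,\lambda}\cong V_{q,\mu}$ more explicitly than the paper does.
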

\begin{proof}

First let $e=1$. By Proposition \ref{equicon}, it is enough to prove that any $\mathcal{B}_d$-indecomposable summand of $V_n^{\otimes d}$ is a $\mathcal{B}_d$-summand of $V_m^{\otimes d}$. Let $M$ be an indecomposable summand of $V_n^{\otimes d}$ as $\mathcal{B}_d$-modules. It follows from Lemma \ref{slambda} that $M$ is an indecomposable summand of $V_{q,\lambda}$ for some $\lambda$. If $m \geq d$, then $V_{q,\lambda}$ is isomorphic to a summand of $V_m^{\otimes d}$. The conclusion follows.    

For general $e$, consider the $\mathcal{B}_{de}$-modules $V_n^{\otimes de}$ and $V_m^{\otimes de}$. The proof of the case $e=1$ (where $d$ is replaced by $de$), implies that there are $\mathcal{B}_{de}$-maps \[f_i:V_n^{\otimes de}\to V_m^{\otimes de},\ g_i:V_m^{\otimes de}\to V_n^{\otimes de}\] such that $\sum_i g_i\circ f_i=\Id_{V_n^{\otimes de}}$, provided that $m\geq ed$. Consider the subgroup, call it $\mathcal{B}_{d,e}$, of $\mathcal{B}_{de}$ generated by $T_{w_1},\cdots,T_{w_{d-1}}$, where $w_i$ are as in \eqref{w}. 

The action of $T_{w_i} \in \mathcal{B}_{d,e}$ on $V_n^{\otimes de}$ is the same as the action of $T_i \in \mathcal{B}_d$ on $(V_n^{\otimes e})^{\otimes d} = V_n^{\otimes de}$ via $(R_{V_n^{\otimes e}})_{i,i+1}$.
The $\mathcal{B}_{de}$-maps $f_i$ and $g_i$ can then be viewed as $\mathcal{B}_d$-maps 
\[f_i:(V_n^{\otimes e})^{\otimes d}\to (V_m^{\otimes e})^{\otimes d},\ g_i:(V_m^{\otimes e})^{\otimes d}\to (V_n^{\otimes e})^{\otimes d}\]
with $\sum_i g_i\circ f_i=\Id$.
This gives the desired result. 
\end{proof}

Define $(V_{m}^{\otimes e})^{(i)}$ to be a (numbered) copy of $V_{m}^{\otimes e}$.

\begin{prop}\label{standardeheckedirectsum}
The $e$-Hecke pair $\oplus_{i=1}^d (V_{m}^{\otimes e})^{(i)}  \in\Gamma_{q,e}^d\mathcal V$ generates $\oplus_{i=1}^N (V_n^{\otimes e})^{(i)}$ for any $n, N \in \N$  if $m\geq de$.
\end{prop}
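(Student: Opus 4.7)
The plan is to reduce the statement to Proposition \ref{standardehecke} by factoring both the generator and the target as tensor products with trivial comodules. First, I would identify $W := \oplus_{j=1}^N (V_n^{\otimes e})^{(j)}$ with $V_n^{\otimes e} \otimes k^N$ as an $A_q(n,n)$-comodule, where $k^N$ denotes the trivial comodule of dimension $N$. A direct calculation from formula \eqref{eq:Rmatrixcomodule} shows that under the natural rearrangement of tensor factors, $R_W$ corresponds to $R_{V_n^{\otimes e}} \otimes \tau_{k^N}$, where $\tau_{k^N}$ is the standard flip on $k^N \otimes k^N$. Taking $d$-fold tensor products, this yields a $\mathcal{B}_d$-equivariant identification
\[
W^{\otimes d} \cong (V_n^{\otimes e})^{\otimes d} \otimes (k^N)^{\otimes d},
\]
where $T_i \in \mathcal{B}_d$ acts as $R_{V_n^{\otimes e}}|_{i, i+1}$ on the first factor and as the flip $\tau|_{i, i+1}$ (that is, the standard $S_d$-action permuting tensor factors) on the second. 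The analogous identification holds for $V^{\otimes d} \cong (V_m^{\otimes e})^{\otimes d} \otimes (k^d)^{\otimes d}$. This decouples the problem into a ``quantum'' part concerning $V_n^{\otimes e}$ and $V_m^{\otimes e}$ and a ``permutation'' part concerning $k^N$ and $k^d$.

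Next, Proposition \ref{standardehecke} supplies $\mathcal{B}_d$-maps $f_i$ and $g_i$ between $(V_n^{\otimes e})^{\otimes d}$ and $(V_m^{\otimes e})^{\otimes d}$ with $\sum_i g_i \circ f_i = \Id$. I would then construct $S_d$-equivariant maps $\phi_l: (k^N)^{\otimes d} \to (k^d)^{\otimes d}$ and $\psi_l: (k^d)^{\otimes d} \to (k^N)^{\otimes d}$ satisfying $\sum_l \psi_l \circ \phi_l = \Id$. Setting $F_{i, l} := f_i \otimes \phi_l$ and $G_{i, l} := g_i \otimes \psi_l$, which are $\mathcal{B}_d$-equivariant because $\mathcal{B}_d$ acts as a product on the two tensor factors, one computes
\[
\sum_{i, l} G_{i, l} \circ F_{i, l} = \Bigl(\sum_i g_i \circ f_i\Bigr) \otimes \Bigl(\sum_l \psi_l \circ \phi_l\Bigr) = \Id_{W^{\otimes d}},
\]
which is exactly the required generation property.

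The main obstacle is the construction of the $\phi_l, \psi_l$. By the Krull--Schmidt argument used in the proof of Proposition \ref{equicon}, it suffices to verify that every indecomposable $S_d$-summand of $(k^N)^{\otimes d}$ is (up to isomorphism) a summand of $(k^d)^{\otimes d}$. Both modules decompose as direct sums of permutation representations $M_\lambda$ indexed by $S_d$-orbits on $[N]^d$ and $[d]^d$ respectively, with the orbit types being exactly the partitions of $d$ of length at most $N$ (resp.\ $d$). Since every partition of $d$ has at most $d$ positive parts, every $M_\lambda$ appearing in $(k^N)^{\otimes d}$ also appears in $(k^d)^{\otimes d}$; the extreme case $\lambda = (1^d)$ is realized by the free $S_d$-orbit of $(1, 2, \ldots, d) \in [d]^d$, yielding the regular representation $k[S_d]$. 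This final point explains precisely why the generator is required to have $d$ summands: with fewer copies, the orbit type $(1^d)$ would be absent from $(k^d)^{\otimes d}$, and certain indecomposables on the target side could not be covered.
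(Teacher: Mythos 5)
Your proof is correct, and it takes a genuinely different route from the one in the paper. The paper works directly inside $W^{\otimes d}$: it decomposes $\bigl(\oplus_{i=1}^N (V_n^{\otimes e})^{(i)}\bigr)^{\otimes d}$ into ``type'' summands $U_{n,\lambda}$ indexed by compositions $\lambda$ of $d$ with at most $N$ parts, truncates each $\lambda$ to a composition $\bar\lambda$ with at most $d$ parts, and then argues that the corresponding summand $U_{m,\bar\lambda}$ of $\bigl(\oplus_{i=1}^d (V_m^{\otimes e})^{(i)}\bigr)^{\otimes d}$ generates $U_{n,\lambda}$ by an argument parallel to the proof of Proposition \ref{standardehecke}. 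Your version instead factors the problem \emph{before} decomposing: the $\mathcal{B}_d$-equivariant isomorphisms $W^{\otimes d}\cong(V_n^{\otimes e})^{\otimes d}\otimes(k^N)^{\otimes d}$ and $V^{\otimes d}\cong(V_m^{\otimes e})^{\otimes d}\otimes(k^d)^{\otimes d}$ (which follow from the computation that $R_{X\otimes K}$ is conjugate to $R_X\otimes\tau$ when $K$ has trivial coaction, because $\mathcal{R}(\cdot,1)=\epsilon(\cdot)=\mathcal{R}(1,\cdot)$) let you invoke Proposition \ref{standardehecke} as a black box on the first tensor factor and reduce the remaining content to a purely classical statement about permutation modules over $kS_d$ on the second. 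This modularization is cleaner: the paper's $U_{n,\lambda}$-decomposition becomes the orbit decomposition of $(k^N)^{\otimes d}$ and the truncation $\lambda\mapsto\bar\lambda$ becomes the observation that any partition of $d$ has at most $d$ parts. One small remark: you phrase the permutation step in terms of indecomposable summands via Krull--Schmidt, but this is stronger than needed (and in positive characteristic the indecomposable summands of permutation modules are subtle); it is enough to match the transitive permutation modules $k[S_d/S_\lambda]$ directly, factoring the identity on each such summand of $(k^N)^{\otimes d}$ through the isomorphic summand of $(k^d)^{\otimes d}$, which requires no semisimplicity or Krull--Schmidt at all.
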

\begin{proof}
Given a composition $\lambda=(\lambda_1,\cdots,\lambda_N)$ of $d$, let $U_{n,\lambda} \subset (\oplus_{i=1}^N (V_n^{\otimes e})^{(i)})^{\otimes d}$ be the direct sum 
\[ \bigoplus_{\sigma \in S_d} (V_n^{\otimes e})^{(i_{\sigma(1)})} \otimes (V_n^{\otimes e})^{(i_{\sigma(2)})} \otimes \cdots \otimes (V_n^{\otimes e})^{(i_{\sigma(d)})}\]
where 
$(V_n^{\otimes e})^{(i_{1})} \otimes \cdots \otimes (V_n^{\otimes e})^{(i_{d})} :=
((V_n^{\otimes e})^{(1)})^{\otimes \lambda_1} \otimes  \cdots \otimes ((V_n^{\otimes e})^{(N)})^{\otimes \lambda_N}$.
The space $U_{n, \lambda}$ is a direct summand of $ (\oplus_{i=1}^N (V_n^{\otimes e})^{(i)})^{\otimes d}$ as a $\mathcal{B}_d$ module and $ (\oplus_{i=1}^N (V_n^{\otimes e})^{(i)})^{\otimes d}$ is a direct sum of $U_{n, \lambda}$ over $\lambda$. Let $\bar \lambda = (\bar \lambda_1, \cdots \bar \lambda_{\bar N})$ be the composition $\lambda$ with all the $0$'s removed, then $\bar N \leq d$. If $\bar N < d$, add $0$'s at the end of $\bar \lambda$ such that the number of entries in $\bar \lambda$ is exactly $d$. 

Define $U_{m, \bar \lambda} \subset (\oplus_{i=1}^d (V_m^{\otimes e})^{(i)})^{\otimes d}$ in much the same way we defined $U_{n, \lambda}$ above. Then by an argument similar to the proof of Proposition \ref{standardehecke} it follows that $U_{m,\bar \lambda}$ generates $U_{n,\lambda}$. By Proposition \ref{equicon} we are done.  
\end{proof}

The special cases in the Propositions \ref{standardehecke} and \ref{standardeheckedirectsum} are enough to guarantee a projective generator in the semisimple situation. Denote $\oplus_{i=1}^d (V_{m}^{\otimes e})^{(i)}  \in\Gamma_{q,e}^d\mathcal V$ by $W^e_{m,d}$. 

\begin{thm}\label{thm:finitegeneration}
Suppose $q$ is not a root of unity and $\operatorname{char}(k)=0$. Then the functor $\Gamma^{d,W^e_{m,d}}_{q,e}$ is a projective generator in $\pde$ if $m\geq ed$.
\end{thm}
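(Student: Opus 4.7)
The plan is to combine the projectivity statement in Proposition \ref{prop:projective} with the generation criterion in Corollary \ref{gammapg}, so that it suffices to show that $W^e_{m,d}$ generates every $e$-Hecke pair when $m\geq ed$. Since Proposition \ref{standardeheckedirectsum} already handles direct sums of tensor powers $V_n^{\otimes e}$, the task reduces to showing that under the hypotheses on $q$ and $k$, every $e$-Hecke pair is a direct summand of such a sum.

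First I would invoke Corollary \ref{gammapg} to reduce to proving that $W^e_{m,d}$ generates every $W\in\Gamma_{q,e}^d\mathcal V$. Given such a $W$, Remark \ref{lemma:eHeckesubquotient} identifies $W$ with a direct sum of subquotients of $V_n^{\otimes e}$ for some $n$ (taking $n$ large enough that all summands embed). Now the semisimplicity hypothesis enters: by Proposition \ref{semisimple}, the category of $A_q(n,n)$-comodules is semisimple when $q$ is generic and $\operatorname{char}(k)=0$, so every subquotient of $V_n^{\otimes e}$ is in fact a direct summand. Hence $W$ is isomorphic to a direct summand of $(V_n^{\otimes e})^{\oplus N}=\bigoplus_{i=1}^N(V_n^{\otimes e})^{(i)}$ for some $N$ (one can also invoke Proposition \ref{prop:subcomodule} applied to each irreducible summand of $W$).

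Next I would apply Proposition \ref{standardeheckedirectsum} with this $N$ and $n$, which, since $m\geq ed$, tells us that $W^e_{m,d}$ generates $\bigoplus_{i=1}^N(V_n^{\otimes e})^{(i)}$. By Lemma \ref{karoubi}, generation descends to direct summands, so $W^e_{m,d}$ generates $W$ as well. This establishes the generation hypothesis of Corollary \ref{gammapg} for $V=W^e_{m,d}$, yielding that $\Gamma^{d,W^e_{m,d}}_{q,e}$ is a projective generator of $\pde$.

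There is no real obstacle in this plan; the substantive inputs are all already available in the paper. The only mildly subtle point is making sure the passage from ``subquotient of $V_n^{\otimes e}$'' to ``direct summand of $\bigoplus_{i=1}^N (V_n^{\otimes e})^{(i)}$'' is handled correctly: one must apply semisimplicity to decompose $W$ into irreducibles, use Proposition \ref{prop:subcomodule} to realize each irreducible as a summand of $V_n^{\otimes e}$, and then combine these into a single summand embedding $W\hookrightarrow\bigoplus_{i=1}^N (V_n^{\otimes e})^{(i)}$. Once this is in place, the generation statement follows formally from Proposition \ref{standardeheckedirectsum} and Lemma \ref{karoubi}, and projectivity is immediate from Yoneda (Proposition \ref{prop:projective}).
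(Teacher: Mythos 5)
Your proposal is correct and matches the paper's proof essentially step for step: reduce via Corollary \ref{gammapg} to the generation statement, use Propositions \ref{semisimple} and \ref{prop:subcomodule} to write an arbitrary $e$-Hecke pair as a direct summand of $\bigoplus_{i=1}^N (V_n^{\otimes e})^{(i)}$, then conclude by Proposition \ref{standardeheckedirectsum} and Lemma \ref{karoubi}, with projectivity from Proposition \ref{prop:projective}. Your extra care in spelling out the passage from ``subquotient'' to ``direct summand'' is a welcome clarification but not a different argument.
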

\begin{proof}
By Corollary \ref{gammapg}, it is enough to show that any $W\in\Gamma^d_{q,e}\mathcal V$ is generated by $W^e_{m,d}$. But by Propositions \ref{semisimple} and \ref{prop:subcomodule}, $W$ is a direct sum of direct summands of some $V_n^{\otimes e}$,which means it is a direct summand of $\oplus_{i=1}^N (V_n^{\otimes e})^{i}$. So Proposition \ref{standardeheckedirectsum} and Lemma \ref{karoubi} shows that $W^e_{m,d}$ generates $W$.

The fact that $\Gamma^{d,W^e_{m,d}}_{q,e}$ is projective is just Proposition \ref{prop:projective}. 
\end{proof} 
 
 \begin{cor}\label{cor:equivalenceofcategories}
The evaluation functor $\mathcal{P}^d_{q,e} \to \operatorname{mod}(S_q(W^e_{m,d},W^e_{m,d};d))$ is an equivalence of categories for $q$ not a root of unity, $\operatorname{char}(k) = 0$ and $m \geq de$. It follows that $S_q(W^e_{m,d},W^e_{m,d};d)$ and $S_q(W^e_{n,d},W^e_{n,d};d)$ are Morita equivalent when $m,n \geq de$.  
\end{cor}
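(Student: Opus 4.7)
The plan is to realize the evaluation at $W^e_{m,d}$ as a Hom functor out of a projective generator, and then apply the standard Morita recognition theorem.

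First, by Proposition \ref{prop:projective} (Yoneda), the evaluation functor $F\mapsto F(W^e_{m,d})$ is naturally isomorphic to $\Hom_{\pde}(\Gamma^{d,W^e_{m,d}}_{q,e},-)$. Setting $F=\Gamma^{d,W^e_{m,d}}_{q,e}$ in this natural isomorphism yields an algebra identification $\End_{\pde}(\Gamma^{d,W^e_{m,d}}_{q,e})\cong \Hom_{\mathcal B_d}((W^e_{m,d})\td,(W^e_{m,d})\td)\cong S_q(W^e_{m,d},W^e_{m,d};d)$, where the last step is Proposition \ref{prop:homspaceschuralgebra}. Since $F(W^e_{m,d})$ is finite dimensional by the definition of $\pde$, the evaluation functor factors through $\operatorname{mod}(S_q(W^e_{m,d},W^e_{m,d};d))$, where the natural right module structure on $F(W^e_{m,d})$ is induced by precomposition of natural transformations.

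Second, I would verify the hypotheses of the standard Morita recognition theorem (an abelian category with a small projective generator $P$ is equivalent to right $\End(P)$-modules via $\Hom(P,-)$). The category $\pde$ is abelian, with kernels and cokernels of natural transformations computed pointwise in $\mathcal V$; the object $\Gamma^{d,W^e_{m,d}}_{q,e}$ is projective by Proposition \ref{prop:projective}; and it is a generator by Theorem \ref{thm:finitegeneration}, where the hypothesis $m\geq de$ enters. Fullness and faithfulness of $\mathrm{ev}_{W^e_{m,d}}$ are immediate from Yoneda, while essential surjectivity follows by lifting a finite presentation $S_q(W^e_{m,d},W^e_{m,d};d)^{\oplus b}\to S_q(W^e_{m,d},W^e_{m,d};d)^{\oplus a}\to M\to 0$ of any finite-dimensional module $M$ to a cokernel of a map $(\Gamma^{d,W^e_{m,d}}_{q,e})^{\oplus b}\to (\Gamma^{d,W^e_{m,d}}_{q,e})^{\oplus a}$ in $\pde$ whose evaluation at $W^e_{m,d}$ recovers $M$.

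For the Morita equivalence statement, once the equivalences $\pde\simeq \operatorname{mod}(S_q(W^e_{m,d},W^e_{m,d};d))$ and $\pde\simeq \operatorname{mod}(S_q(W^e_{n,d},W^e_{n,d};d))$ are in place for $m,n\geq de$, composing one with a quasi-inverse of the other gives an equivalence between the two module categories, which is the definition of Morita equivalence. I expect the main technical obstacle to be bookkeeping of the left/right and opposite-algebra conventions so that the target ``$\operatorname{mod}$'' really matches the natural module structure on $F(W^e_{m,d})$; the underlying abstract Morita argument is standard, but the sidedness of the action arising from natural-transformation composition must be verified explicitly.
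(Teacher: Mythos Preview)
Your proposal is correct and matches the paper's intended argument: the paper gives no explicit proof of this corollary, treating it as an immediate consequence of Theorem~\ref{thm:finitegeneration} via the standard Morita recognition theorem you invoke, together with the Yoneda identification from Proposition~\ref{prop:projective}. Your careful bookkeeping of the module side and the essential surjectivity via finite presentations simply spells out the standard details the paper leaves implicit.
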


The main motivation behind our construction is composition of polynomial functors. For this to work in the greatest generality, we require direct sum of indecomposable $e$-Hecke pairs to be in the domain. To see this consider the simplest case possible, when $e=1, d=1$. Consider the polynomial functor mapping the $1$-Hecke pair $V_1 \mapsto V_1 \oplus V_1$. This defines the functor since $V_1$ is a projective generator by Theorem \ref{thm:finitegeneration}. It follows that $V_1 \oplus V_1$ must be in the domain if we wish to compose any two polynomial functors (subject to constraints on $d$ and $e$). 

The category $\pde$ is equivalent to the module category of a generalized Schur algebra. We now present a category who has the same property, but whose Schur algebra is slightly simpler.   

Recall the category $\mathcal{P}_{q,e}^{\circ, d}$ from Definition \ref{def:qpfcirc}. Each quantum polynomial functor in $\pde$ can then be restricted to an object of $\mathcal{P}_{q,e}^{\circ, d}$. The added condition on the domain makes it so that composition is not possible in $\mathcal{P}_{q,e}^{\circ, d}$ (in the simplest case presented above, because $V_1 \oplus V_1$ is not part of the domain of $\mathcal{P}_{q,1}^{\circ, d}$). However this condition allows one to prove the existence of a simpler projective generator. 

\begin{thm}\label{thm:finitegenerationindecomposable}
Suppose $q$ is not a root of unity and $\operatorname{char}(k)=0$. Then the functor $\Gamma^{d,V_m^{\otimes e}}_{q,e}$ is a projective generator in $\mathcal{P}^{\circ, d}_{q,e}$ if $m\geq ed$.
\end{thm}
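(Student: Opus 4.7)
The plan is to reduce the claim to a chain of results already established in the excerpt, mirroring the argument of Theorem \ref{thm:finitegeneration} but exploiting the stronger restriction on the domain of $\mathcal{P}^{\circ,d}_{q,e}$. The key point is that in $\mathcal{P}^{\circ,d}_{q,e}$ we no longer need to handle arbitrary direct sums $\bigoplus_i (V_n^{\otimes e})^{(i)}$ on the domain side, so the direct sum in the generator $W^e_{m,d}$ used in Theorem \ref{thm:finitegeneration} can be replaced by a single $V_m^{\otimes e}$.

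First, I would invoke Corollary \ref{gammapg}: to show that $\Gamma^{d,V_m^{\otimes e}}_{q,e}$ is a projective generator of $\mathcal{P}^{\circ,d}_{q,e}$, it suffices to show that $V_m^{\otimes e}$ generates every $W$ in the domain category, i.e.\ every $e$-Hecke pair that is a subquotient of $V_n^{\otimes e}$ for some $n$. Projectivity itself is immediate from Proposition \ref{prop:projective}.

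Next, I would use the semisimplicity hypothesis: under the assumption that $q$ is not a root of unity and $\operatorname{char}(k)=0$, Proposition \ref{semisimple} gives that the category of finite dimensional $A_q(n,n)$-comodules is semisimple, so any subquotient of $V_n^{\otimes e}$ is in fact a direct summand of $V_n^{\otimes e}$ (as an $A_q(n,n)$-comodule). Hence an arbitrary $W$ in the domain of $\mathcal{P}^{\circ,d}_{q,e}$ is a direct summand of $V_n^{\otimes e}$ for some $n$.

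Finally, I would combine Proposition \ref{standardehecke} with Lemma \ref{karoubi}. Since $m \geq ed$, Proposition \ref{standardehecke} gives that $V_m^{\otimes e}$ generates $V_n^{\otimes e}$ for every $n$, and Lemma \ref{karoubi} then propagates this to every direct summand of $V_n^{\otimes e}$ as an $A_q(n,n)$-comodule. Thus $V_m^{\otimes e}$ generates $W$, completing the verification of the hypothesis of Corollary \ref{gammapg}. There is no serious obstacle here: all the hard work (the Schur–Weyl-type argument producing the maps $f_i,g_i$ for $m \geq de$, semisimplicity, and the Karoubi-style transfer to summands) has already been done earlier in the section. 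The role of the restricted domain in $\mathcal{P}^{\circ,d}_{q,e}$ is precisely to remove the need to additionally cover arbitrary external direct sums $\bigoplus_{i=1}^N (V_n^{\otimes e})^{(i)}$, which is what forced the direct sum in $W^e_{m,d}$ in Theorem \ref{thm:finitegeneration}.
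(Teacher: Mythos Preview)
Your proposal is correct and follows essentially the same approach as the paper: reduce via Corollary \ref{gammapg} and Proposition \ref{prop:projective} to showing that $V_m^{\otimes e}$ generates every object in the (restricted) domain, then use semisimplicity to replace subquotients by direct summands, and conclude with Proposition \ref{standardehecke} and Lemma \ref{karoubi}. This is exactly what the paper's proof does (it simply says to rerun the proof of Theorem \ref{thm:finitegeneration} with Proposition \ref{standardehecke} in place of Proposition \ref{standardeheckedirectsum}).
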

 \begin{proof}
 The proof follows along the same lines as the proof of Theorem \ref{thm:finitegeneration}, only now we require the simpler Proposition \ref{standardehecke} (instead of Proposition \ref{standardeheckedirectsum}) because of the extra condition on the domain. 
 \end{proof}
 
 \begin{cor}\label{cor:equivalenceofcategoriesindecomposable}
The evaluation functor $\mathcal{P}^{\circ, d}_{q,e} \to \operatorname{mod}(S_q(V_n^{\otimes e},V_n^{\otimes e},d))$ is an equivalence of categories for $q$ not a root of unity and $\operatorname{char}(k) = 0$. It follows that $S_q(V_n^{\otimes e},V_n^{\otimes e},d)$ and $S_q(V_m^{\otimes e},V_m^{\otimes e},d)$ are Morita equivalent when $m,n \geq de$.  
\end{cor}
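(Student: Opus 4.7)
The plan is to apply standard Morita theory using the projective generator supplied by Theorem \ref{thm:finitegenerationindecomposable}. Fix $n\geq de$ and set $P:=\Gamma^{d,V_n^{\otimes e}}_{q,e}\in\mathcal{P}^{\circ,d}_{q,e}$. The hypothesis that $q$ is not a root of unity and $\operatorname{char}(k)=0$ guarantees via Theorem \ref{thm:finitegenerationindecomposable} that $P$ is a (finitely generated) projective generator of $\mathcal{P}^{\circ,d}_{q,e}$, and by Proposition \ref{prop:projective} the evaluation functor $F\mapsto F(V_n^{\otimes e})$ is naturally isomorphic to $\Hom_{\mathcal{P}^{\circ,d}_{q,e}}(P,-)$.

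Next, I would identify the endomorphism algebra $\End_{\mathcal{P}^{\circ,d}_{q,e}}(P)$ with $S_q(V_n^{\otimes e},V_n^{\otimes e};d)$. As a vector space this is immediate from Yoneda's Lemma together with the definition of $\Gamma^{d,V_n^{\otimes e}}_{q,e}$ and Proposition \ref{prop:homspaceschuralgebra}:
\[\End_{\mathcal{P}^{\circ,d}_{q,e}}(P)\cong P(V_n^{\otimes e})=\Hom_{\mathcal{B}_d}((V_n^{\otimes e})^{\otimes d},(V_n^{\otimes e})^{\otimes d})\cong S_q(V_n^{\otimes e},V_n^{\otimes e};d).\]
To get an algebra isomorphism I would verify that composition of natural transformations on the left corresponds to the multiplication $m_{V_n^{\otimes e},V_n^{\otimes e},V_n^{\otimes e}}$ on the right; this is a direct consequence of Proposition \ref{HY2.8} applied with $U=W=V=V_n^{\otimes e}$, together with how the commutative square in \eqref{projectiveobject} expresses composition of natural transformations.

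With these two ingredients, standard Morita theory (the Eilenberg--Watts theorem in its abelian form, or equivalently the characterization of module categories via a finitely generated projective generator) yields that $\Hom_{\mathcal{P}^{\circ,d}_{q,e}}(P,-)$, and hence the evaluation functor, is an equivalence from $\mathcal{P}^{\circ,d}_{q,e}$ to the category of finite dimensional right modules over $S_q(V_n^{\otimes e},V_n^{\otimes e};d)$. The Morita equivalence of $S_q(V_m^{\otimes e},V_m^{\otimes e};d)$ and $S_q(V_n^{\otimes e},V_n^{\otimes e};d)$ for $m,n\geq de$ is then an immediate corollary: both module categories are equivalent to the same abelian category $\mathcal{P}^{\circ,d}_{q,e}$.

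I do not expect a serious obstacle here; all the heavy lifting has already been done in establishing the projective generator in Theorem \ref{thm:finitegenerationindecomposable}. The only point that requires genuine care is the identification of the multiplication on $\End(P)$ with that on $S_q(V_n^{\otimes e},V_n^{\otimes e};d)$, but this is essentially Proposition \ref{HY2.8}. One should also take care to state the first sentence precisely (the equivalence holds only for $n\geq de$, which is implicit from Theorem \ref{thm:finitegenerationindecomposable}).
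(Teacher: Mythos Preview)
Your proposal is correct and matches the paper's approach: the corollary is stated without proof, as an immediate consequence of the projective generator furnished by Theorem \ref{thm:finitegenerationindecomposable} together with standard Morita theory, exactly as you outline. Your additional care in checking that the algebra structures agree via Proposition \ref{HY2.8} is a welcome elaboration of a point the paper leaves implicit.
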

 
The category $\mathcal{P}^{\circ, d}_{q,1}$ is equivalent to the category studied by Hong and Yacobi in \cite{HongYacobi}. The category $\mathcal P^d_{q,1}$ is strictly greater. 
In order to be able to define composition one needs to consider higher degree comodules in the domain (which produces $\mathcal{P}_{q,e}^{\circ, d}$) and then also consider direct sums (which produces $\pde$). 

\begin{rem}\label{eqtoHY}
Setting $e=1$ in Theorem \ref{thm:finitegenerationindecomposable}, we obtain Theorem 4.7 in Hong and Yacobi \cite{HongYacobi} when $q$ is generic and $\operatorname{char}(k)=0$. But note that our proof for $e=1$ works when $q$ is a root of unity or $\operatorname{char}(k)\neq 0$ with a minor addition which we now explain. Theorem \ref{thm:finitegenerationindecomposable} depends on Corollary \ref{gammapg}, Proposition \ref{standardehecke} and Lemma \ref{karoubi} which are true regardless if $q$ is a root of unity or not and if $\operatorname{char}(k)$ is $0$ or not. It also depends on Propositions \ref{semisimple} and \ref{prop:subcomodule}, which are not true in general for $q$ a root of unity or $\operatorname{char}(k) = 0$. However, when $e=1$, Propositions \ref{semisimple} and \ref{prop:subcomodule} hold for $q$ a root of unity or $\operatorname{char}(k) \neq 0$ because $1$-Hecke pairs are just direct sums of the defining comodule $V_n$ for any $n$. Thus we obtain Theorem 4.7 in \cite{HongYacobi} with no restrictions on $q$ or the characteristic of  $k$. 
\end{rem}

\begin{rem}\label{rem:SWduality}
We can think of the generalized Schur algebras $S_q(V_n^{\otimes e}, V_n^{\otimes e};d)$ for generic $q$ as follows. Quantum Schur-Weyl duality (due to Jimbo) says that there is a commuting action of the Hecke algebra $\mathcal{H}_d$ and the $q$-Schur algebra $S_q(V_n,V_n;d)$ on the space $V_n^{\otimes d}$ and these two actions satisfy a double centralizer property. The $e$-Hecke algebra $\mathcal{H}_{d,e} \subset \mathcal{H}_{de}$ acts on $(V_n^{\otimes e})^{\otimes d}$ as explained before. Then $S_q(V_n^{\otimes e}, V_n^{\otimes e};d) \supset S_q(V_n, V_n; de)$ is conjecturally the object that makes the following diagram satisfy a double centralizer property on both rows.
\begin{center}
\begin{tikzcd}[
  ar symbol/.style = {draw=none,"#1" description,sloped},
  isomorphic/.style = {ar symbol={\cong}},
  subset/.style = {ar symbol={\subset}},
  supset/.style = {ar symbol={\supset}},
  equals/.style = {ar symbol={=}},
  ]
  S_q(V_n^{\otimes e}, V_n^{\otimes e};d) & \curvearrowright & (V_n^{\otimes e})^{\otimes d} &\curvearrowleft  & \mathcal{H}_{d,e}  \\
  S_q(n, n;de) \ar[u,subset] & \curvearrowright & V_n^{\otimes de}  \ar[u,isomorphic] & \curvearrowleft & \mathcal{H}_{de} \ar[u,supset] \\
\end{tikzcd}
\end{center}
\end{rem}

\section{The category $\widetilde{\pd}$}\label{sec:where}

We now define a category that ``lives inbetween'' the category $\pde$ for any $e$ and the category $\pd$. 
Let $d$ be a positive integer. The quantum divided power category
$\Gamma^d_{q} \mathcal{V}$ is the category with objects formal finite direct sums $\oplus_i V_i$ where each $V_i$ is an $e$-Hecke for some $e$. 
The morphisms are defined on homogeneous objects as
follows: \[ \text{Hom}_{\Gamma^d_{q}} (V_i, W_j):= \text{Hom}_{\mathcal{B}_d}
(V_i^{\otimes d}, W_j^{\otimes d}). \]
The Hom extends naturally to all objects via the formula 
\[\text{Hom}_{\Gamma^d_{q}} \Big(\bigoplus_i V_i, \bigoplus_j W_j \Big) = \bigoplus_{i,j} \text{Hom}_{\Gamma^d_{q}}(V_i, W_j).\] 

\begin{defn}\label{def:qpftilde}
The category $\widetilde{\pd}$ is the category of of linear functors
\[ F : \Gamma^d_{q} \mathcal{V} \to \Gamma^1_q\mathcal{V}. \]
Morphisms are natural transformations of functors. Note that $\Gamma^1_q\mathcal V$ is equivalent to $\mathcal V$.
\end{defn}

\begin{rem}
Given a linear functor $F : \Gamma^d_{q} \mathcal{V} \to \Gamma^1\mathcal{V} $, denote by $F_e$ its restriction to $\Gamma^d_{q,e} \mathcal{V}$. Then it is not hard to see that $F_e \in \pde$.  
\end{rem}

We can define a composition on $\widetilde{\pd}$ similarly to how we defined composition between a functor in $ \mathcal{P}^{d_1}_{q,d_2 e}$ and a functor in $\mathcal{P}^{d_2}_{q, e}$.  

It is interesting to note that when $q = 1$, the category $\widetilde{\pd}$ becomes the category of classical polynomial functors $\mathcal{P}^d$ due to Friedlander and Suslin. Therefore when $q=1$, the category $\widetilde{\pd}$ and the categories $\pde$ for any $e$ are all equivalent. We show that for generic $q$, $\widetilde{\pd}$ is not equivalent to $\pde$ for any $e$. In fact in the quantum case $\widetilde{\pd}$ is ``closer'' to $\pd$ than to $\pde$ since both $\widetilde{\pd}$ and $\pd$ are $not$ finitely generated while $\pde$ is for $q$ is generic. This is another reason why the theory of quantum polynomial functors is richer in the quantum case than it is in the classical case. 

\begin{prop}\label{pro:notfinitegeneration}
Suppose $q$ is not a root of unity. Then the category $\widetilde{\pd}$ is not finitely generated. In particular, $\widetilde{\pd}$ is not equivalent to $\pde$ for any $e$.
\end{prop}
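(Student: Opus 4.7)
The plan is to show that for every object $W\in\Gamma^d_q\mathcal V$ there is a non-zero functor $F\in\widetilde{\pd}$ with $F(W)=0$. Any such $F$ cannot be $W$-generated, since surjectivity of $F_{W,U}'\colon S_q(W,U;d)\otimes F(W)\to F(U)$ would force $F\equiv 0$, contradicting $F\neq 0$. The construction of $F$ will use a divided power functor attached to a carefully chosen one-dimensional Hecke pair.

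First I will decompose $W=\bigoplus_i V_i$ as a finite direct sum of Hecke pairs. For generic $q$, each $R_{V_i}$ is diagonalizable on $V_i\otimes V_i$ with finitely many eigenvalues, so the union over $i$ of all these eigenvalues is a finite subset $\mathcal E(W)\subset k^\times$. Next, for each $e\geq 1$ I consider the one-dimensional $e$-Hecke pair $V^e:=V_1^{\otimes e}$. A direct computation from the coquasitriangular structure, as already recorded in \S\ref{sub:dividedpowers}, shows $R_{V^e}$ acts on the one-dimensional space $V^e\otimes V^e$ as multiplication by $q^{e^2}$. Since $q$ is not a root of unity the scalars $q^{e^2}$ are pairwise distinct, so I can pick $e$ large enough that $q^{e^2}\notin\mathcal E(W)$.

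With this choice I set $F:=\Gamma^{d,V^e}_q$, extended to all of $\Gamma^d_q\mathcal V$ by the Yoneda formula $F(U)=\Hom_{\mathcal B_d}((V^e)^{\otimes d},U^{\otimes d})$. Non-vanishing is immediate because $\mathrm{id}_{(V^e)^{\otimes d}}\in F(V^e)$. For vanishing on $W$, note that $(V^e)^{\otimes d}$ is a one-dimensional $\mathcal B_d$-module on which each generator $T_j$ acts by $q^{e^2}$, so an element of $F(V_i)=\Hom_{\mathcal B_d}((V^e)^{\otimes d},V_i^{\otimes d})$ is exactly a vector $u\in V_i^{\otimes d}$ with $(R_{V_i})_{j,j+1}u=q^{e^2}u$ for every $1\leq j\leq d-1$. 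When $d\geq 2$ this in particular forces $u$ to be a $q^{e^2}$-eigenvector of $(R_{V_i})_{1,2}$; but the eigenvalues of $(R_{V_i})_{1,2}$ on $V_i^{\otimes d}$ are exactly those of $R_{V_i}$ on $V_i\otimes V_i$, so by the choice of $e$ this space vanishes. Hence $F(V_i)=0$ for each $i$, and by linearity of $F$ we conclude $F(W)=\bigoplus_i F(V_i)=0$.

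Combining these, no object $W$ generates $\widetilde{\pd}$, establishing the first claim (the case $d=1$ is degenerate, since $\Gamma^1_q\mathcal V$ is equivalent to $\mathcal V$ via the forgetful functor, so I assume $d\geq 2$). The ``in particular'' conclusion is then immediate from Theorem~\ref{thm:finitegeneration}, which provides a finite projective generator for $\pde$, together with the fact that equivalences of categories preserve the existence of a finite projective generator. The only nontrivial step in the plan is the eigenvalue-separation argument; the rest is a clean Hom-space computation, and I do not anticipate any serious obstacle beyond ensuring that the generic-$q$ hypothesis is used precisely where needed (namely to make each $R_{V_i}$ diagonalizable and the family $\{q^{e^2}\}_e$ infinite).
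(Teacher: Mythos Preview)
Your proof is correct and hinges on the same observation as the paper's: the one-dimensional $e$-Hecke pairs $V_1^{\otimes e}$ produce an infinite family of pairwise non-isomorphic one-dimensional $\mathcal B_d$-modules (with eigenvalue $q^{e^2}$), which a finite-dimensional $W^{\otimes d}$ cannot accommodate. The packaging differs slightly. The paper fixes the \emph{single} functor $F=\otimes^d$ and, for any candidate generator $V$, chooses $f$ so that $\Hom_{\mathcal B_d}(V^{\otimes d},(k^{\otimes f})^{\otimes d})=0$, making the evaluation map to the one-dimensional target fail to be surjective. You instead let $W$ be given and construct a different witness $F=\Gamma^{d,V_1^{\otimes e}}_q$ for each $W$, chosen to vanish on $W$. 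The paper's version is marginally sharper in that it exhibits one functor that no object generates; yours is perhaps more transparent, since the vanishing of the representable functor on $W$ follows immediately from the eigenvalue choice. One small remark: your appeal to diagonalizability of $R_{V_i}$ is not actually needed---you only use that each $R_{V_i}$ has finitely many eigenvalues, which is automatic.
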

\begin{proof}
Consider the sequence of objects $k^{\otimes f}$ in $\Gamma^d_q\mathcal V$, where $f \in \N$ and $k=(k,q)$ is the trivial $A_q(1,1)$-comodule of degree $1$. Then each $(k^{\otimes f})^{\otimes d}$ is a one-dimensional $\mathcal{B}_d$-module on which each $T_i$ acts as multiplication by $q^{l(w_i)}=q^{f^2}$. These form an infinite collection of irreducibles $\mathcal{B}_d$-modules since $q$ is not a root of unity. 
Take $F=\bigotimes^d$ in \ref{def:qpftilde}; we show that no $V$ makes the evaluation map 
\[V^{\otimes d}\otimes \Gamma_{q,e}^{d,V}(k^{\otimes f})=V^{\otimes d}\otimes \Hom_{\mathcal{B}_d}(V^{\otimes d},(k^{\otimes f})^{\otimes d})\to (k^{\otimes f})^{\otimes d}\] 
surjective for all $f$.
If there was such $V$, then $V^{\otimes d}$ (as $\mathcal{B}_d$-module) would contain all the $k^{\otimes f}$ above.
This is impossible since $V$ is finite dimensional. 
\end{proof}

\begin{rem}
Note that the proof makes use of the normalization of the $R$-matrix $R_n$ defined in equation \eqref{def:Rmatrix}. In particular, we use that $R_n (v_1 \otimes v_1) = q v_1 \otimes v_1$. For a different normalization (for example where $R_n (v_1 \otimes v_1) = v_1 \otimes v_1$) the proof above doesn't work. The result stays true, while the argument becomes computationally more complicated. One needs to look at $R_2$ (which will have an eigenvalue $-q^{-2}$ for the normalization mentioned above) and modify the proof of Proposition \ref{pro:notfinitegeneration} accordingly.
\end{rem}

\section{Remarks on quantum polynomial functors when $q$ is a root of unity}\label{sec:rootsofunity}

Let $q$ be an $l$-th root of unity where $l>1$ is an odd integer.

\subsection{Representability}

The proof of Theorems \ref{thm:finitegeneration} and \ref{thm:finitegenerationindecomposable} are not valid in this case, since an indecomposable $e$-Hecke pair $W\in \operatorname{comod}(A_q(n,n))$ is not necessarily a direct sum of summands of $V_n^{\otimes e}$.
But it can still be true that $\pde$ is finitely generated, hence equivalent to the module category of a finite dimensional algebra. 
For example in the classical case ($q=1$) when the field $k$ has characteristic $p$, the category of polynomial functors is not semisimple, but it does have a projective generator just as in the case when the characteristic of the field is $0$. 
In this section we present some remarks on the case when $q$ is a root of unity not equal to $1$.

First recall that it is only at the last step of the proof that we use the semisimplicity. In particular, Corollary \ref{gammapg}, Lemma \ref{karoubi} and Proposition \ref{equicon} are valid when $q$ is non-generic. We summarize them as a separate statement.

\begin{prop}\label{fgcond}
Let $V$ be an $e$-Hecke pair. Assume for any $e$-Hecke pair $W$, every indecomposable $\mathcal{B}_d$-summand of $W\td$ is isomorphic, as a $\mathcal{B}_d$-module, to a direct summand of $V\td$. Then the category $\pde$ has a (finite) projective generator $\Gamma^{d,V}_{q,e}$.
\end{prop}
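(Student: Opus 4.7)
The plan is essentially to assemble the three ingredients explicitly cited just before the statement, namely Corollary \ref{gammapg}, Lemma \ref{karoubi}, and Proposition \ref{equicon}, none of which relied on semisimplicity. Because the hypothesis of the proposition is, word for word, the hypothesis of Proposition \ref{equicon} applied to an arbitrary $e$-Hecke pair $W$, the latter will immediately yield that $V$ generates every such $W$.

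In detail, I would first fix $W\in\Gamma^d_{q,e}\mathcal V$ and use the assumption on $\mathcal{B}_d$-summands of $W^{\otimes d}$ together with Proposition \ref{equicon} to produce $\mathcal{B}_d$-maps $f_i\colon W^{\otimes d}\to V^{\otimes d}$ and $g_i\colon V^{\otimes d}\to W^{\otimes d}$ satisfying $\sum_i g_i\circ f_i=\Id_{W^{\otimes d}}$. Applying any $F\in\pde$ to this identity yields $\sum_i F(g_i)\circ F(f_i)=\Id_{F(W)}$, from which Lemma \ref{generatorgivessurjective} extracts a surjection $F(V)\otimes\Gamma^{d,V}_{q,e}(W)\twoheadrightarrow F(W)$, natural in $W$. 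Thus $V$ generates every $W\in\Gamma^d_{q,e}\mathcal V$ in the sense of Definition \ref{fgdef}.

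Second, I would invoke Corollary \ref{gammapg} to finish: since $V$ generates every $W$, the representable functor $\Gamma^{d,V}_{q,e}=\Hom_{\Gamma^d_{q,e}\mathcal V}(V,-)$ is a generator of $\pde$, and it is projective by Yoneda (Proposition \ref{prop:projective}). Finiteness of the generator is immediate from the fact that $V$ itself is a single $e$-Hecke pair of finite dimension. Note that Lemma \ref{karoubi} is not strictly needed in the argument as stated, because the hypothesis is already imposed on every $W$ directly, rather than on a family of summand-generators as in the semisimple situation.

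There is no real obstacle in the proof itself: the content is precisely the observation that the semisimplicity assumption used in Theorems \ref{thm:finitegeneration} and \ref{thm:finitegenerationindecomposable} enters only to guarantee that every $e$-Hecke pair is a summand of some $V_n^{\otimes e}$ and can therefore be generated by $W^e_{m,d}$ or $V_m^{\otimes e}$; removing that step and inserting the abstract summand-hypothesis leaves the rest of the argument intact. The genuine difficulty, which the proposition deliberately sidesteps, is to exhibit an $e$-Hecke pair $V$ for which the hypothesis holds when $q$ is a nontrivial root of unity — a question that presumably depends delicately on the order $l$ of $q$, on $d$, on $e$, and on the indecomposable structure of tensor powers of comodules over $A_q(n,n)$ for large $n$.
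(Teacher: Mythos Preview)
Your proposal is correct and matches the paper's own treatment: the proposition is explicitly presented there as a summary of Proposition \ref{equicon} and Corollary \ref{gammapg} (with Proposition \ref{prop:projective} implicit for projectivity), exactly as you assemble them. Your observation that Lemma \ref{karoubi} is not strictly needed under the stated hypothesis is also accurate.
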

Note that the set of divided powers generates $\pde$, thus if $\pde$ is finitely generated then one can find a functor of the form $\Gamma^{d,V}_{q,e}$ which is a projective generator.

The condition in Proposition \ref{fgcond} is reduced to an elementary statement about Jordan block decomposition of $R$-matrices if $d=2$.

\begin{prop}\label{prop:jordanblock}
Let $n\in \N$, and let $U$ be an indecomposable $A_q(m,m)$-comodule of degree $e$. Denote the Jordan blocks of $R_{V_n^{\otimes e}}$ by $B(n_i,a_i)$, where $i$ runs through some finite index set $I$, $n_i\in \mathbb N$ is the rank of the block and $a_i\in k$ is the generalized eigenvalue of the block. 
Similarly, name the Jordan blocks of $R_U$ by $B(m_j,b_j)$, where $j\in J$. 

Then the identity map on $U\otimes U$ factors through $V_n^{\otimes e}\otimes V_n^{\otimes e}$ as a $\mathcal{B}_2$-map if and only if for each $j\in J$, there exists $i\in I$ such that $a_i=b_j$ and $n_i=m_j$.  
\end{prop}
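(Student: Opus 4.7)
The plan is to translate the question into one about modules over the group algebra of $\mathcal{B}_2$, which is $k[T,T^{-1}]$, and then invoke the Krull--Schmidt theorem applied to finite-dimensional modules over a principal ideal domain.

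First I would note that the braid group $\mathcal{B}_2$ is infinite cyclic, freely generated by $T_1$ (the relations \eqref{braidrelations} are vacuous for $d=2$). Consequently, a $\mathcal{B}_2$-module structure on a finite dimensional vector space is exactly the data of an invertible linear operator, i.e., a finite dimensional $k[T,T^{-1}]$-module structure. Under this identification the $\mathcal{B}_2$-modules $V_n^{\otimes e}\otimes V_n^{\otimes e}$ and $U\otimes U$ become $k[T,T^{-1}]$-modules with $T$ acting by $R_{V_n^{\otimes e}}$ and $R_U$ respectively, and $\mathcal{B}_2$-module homomorphisms are precisely $k[T,T^{-1}]$-module homomorphisms. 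The Jordan blocks $B(n_i,a_i)$ and $B(m_j,b_j)$ in the statement are then the usual indecomposable Jordan blocks for these operators.

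Next I would show that the identity on $U\otimes U$ factoring through $V_n^{\otimes e}\otimes V_n^{\otimes e}$ as a $\mathcal{B}_2$-map is equivalent to $U\otimes U$ being isomorphic to a direct summand of $V_n^{\otimes e}\otimes V_n^{\otimes e}$ in the category of $k[T,T^{-1}]$-modules. Indeed, if $\mathrm{id}_{U\otimes U}=g\circ f$ for $\mathcal{B}_2$-maps $f,g$, then $f\circ g$ is an idempotent endomorphism of $V_n^{\otimes e}\otimes V_n^{\otimes e}$ whose image is isomorphic to $U\otimes U$, giving a direct summand; conversely, a direct summand decomposition yields a canonical inclusion and projection whose composition in one order is the identity.

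Finally I would apply Krull--Schmidt for finite dimensional $k[T,T^{-1}]$-modules (extending scalars to the algebraic closure if necessary, since the Jordan block decomposition is preserved). The indecomposables are exactly the blocks $B(n,a)$ with $n\geq 1$ and $a\in k^\times$, so the direct summand condition reduces to containment with multiplicity of the multiset of Jordan blocks of $R_U$ on $U\otimes U$ inside that of $R_{V_n^{\otimes e}}$ on $V_n^{\otimes e}\otimes V_n^{\otimes e}$; this is the stated matching condition, read as providing an injective assignment $j\mapsto i$ with $(n_i,a_i)=(m_j,b_j)$. The main technical subtlety lies in interpreting the pointwise condition correctly with respect to multiplicities of repeated Jordan blocks, but once this bookkeeping is fixed the statement is an immediate consequence of the classification of indecomposable modules over the principal ideal domain $k[T,T^{-1}]$.
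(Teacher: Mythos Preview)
Your approach---identifying $\mathcal{B}_2$-modules with finite-dimensional $k[T,T^{-1}]$-modules and invoking Krull--Schmidt---is correct and is exactly what underlies the paper's two-sentence proof, which just says to factor the identity on each Jordan block of $R_U$ by mapping it isomorphically onto the matching block of $R_{V_n^{\otimes e}}$. The paper only spells out the ``if'' direction; your argument is more complete.

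The one substantive point is the multiplicity issue you flag at the end. Under your literal reading $\mathrm{id}_{U\otimes U}=g\circ f$, the equivalent condition is indeed \emph{multiset} containment of Jordan blocks, not the pointwise condition written in the proposition. The paper, however, is using ``factors through'' in the sense of its earlier notion of \emph{generates}: the identity is a sum $\sum_i g_i\circ f_i$ of compositions through $V_n^{\otimes e}\otimes V_n^{\otimes e}$ (this is visible in their proof, which decomposes $\mathrm{id}$ block by block and sums). With that interpretation the pointwise condition is the correct one: for the ``only if'' direction, restrict an expression $\mathrm{id}=\sum_i g_if_i$ to a single block $B(m_j,b_j)$; since $\End_{k[T,T^{-1}]}(B(m_j,b_j))$ is local, some summand $g_if_i$ is already invertible on that block, exhibiting $B(m_j,b_j)$ as a direct summand of $V_n^{\otimes e}\otimes V_n^{\otimes e}$. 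So the discrepancy you noticed is not mere bookkeeping but a genuine difference in the meaning of ``factors through''; once you adopt the paper's (sum) reading, your Krull--Schmidt framework together with this local-ring observation gives both directions cleanly.
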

\begin{proof}
It is enough to factor the identity on each Jordan block of $U$. But we can do it by embedding $B(m_j,b_j)$ onto the block $B(n_i,a_i)$.
\end{proof}

Therefore understanding the Jordon block decomposition of $R_U$ would allow one to prove representability for $\mathcal{P}_{q,e}^{\circ, d}$. A statement similar to Proposition \ref{prop:jordanblock}, where one replaces the indecomposable $U$ by any $e$-Hecke pair $U$ can also be proven.  


\begin{rem}
The condition in Proposition \ref{prop:jordanblock} is trivially true if the $R$-matrix of any $e$-Hecke pair is diagonalizable. This is the case when $q$ is not a root of unity, but it is not true when $q$ is a root of unity. For example $R_q$ is not diagonalizable when $q = \pm i$. If we look only at $1$-Hecke pairs, these are the only values for $q$ where $R_V$ is not diagonalizable. For a general $e$-Hecke pair $V$, we expect there are other roots of unity for which $R_V$ is not diagonalizable.  
\end{rem}

\begin{rem}
It is possible that the functor cohomology for quantum polynomial functors agrees with the corresponding quantum group cohomology even if the category $\pde$ does not have a finite generator. A similar approach is found in Suslin's appendix in \cite{FFSS}.
\end{rem}

\subsection{An additional structure}
An important feature when $q$ is a root of unity is the existence of the Frobenius twist. 
When $e=1$, this structure comes from the $q$-Schur algebra.
Let us write this in terms of polynomial functors.
This is explained in a previous version of \cite{HongYacobi}, which we now repeat. The algebra map $\operatorname{Fr_{n,m}}:A_1(n,m)\to A_q(n,m)$ 
defined on the standard generators by \[x_{ij} \mapsto x_{ij}^l\] is also a coalgebra map. 
Using this map, we can define a functor
\[(-)^{[1]}:\mathcal{P}^d\to \mathcal{P}^{ld}_{q,1}.\]
We call it the Frobenius twist.
(We remind the reader that the classical polynomial functor category $\mathcal P^d$ can be viewed as $\mathcal P^d_{1,e}$ for any $e$, and that $\mathcal P^{ld}_{q,1}$ is equivalent to the category $\mathcal P^{ld}_q$ of Hong-Yacobi \cite{HongYacobi}. See Remark \ref{eqtoHY}.)
Given $F\in\mathcal{P}^d$, its Frobenius twist $F^{[1]}$ is defined to be a functor from $\Gamma^d_{q,1}\mathcal V$ to $\Gamma^1_{q,d}\mathcal V\cong \mathcal V$ that sends a $1$-Hecke pair $V$ to the vector space $F(V)$ (forgetting the $1$-Hecke structure, $V$ is viewed as an object in $\Gamma^d\mathcal{V}$). 
To define what $F^{[1]}$ does to morphisms, it is enough to specify the map
\[(F^{[1]})''_{V,W}:F(V)\to F(W)\otimes A_q(W,V).\]
Note that $V$ and $W$ are $1$-Hecke pairs, hence direct sum of standard $A_q(n,n)$-comodules and $A_q(m,m)$-comodules, respectively, for certain $n, m$. We define the map $\operatorname{Fr}: A_1(W,V)\to A_q(W,V)$ as a straightforward generalization of $\operatorname{Fr}_{n,m}$. It maps 
\[x_{ij} \in \bigoplus A_1(V_n, V_m) \cong A_1(V,W) \mapsto x_{ij}^l \in \bigoplus A_q(V_n, V_m) \cong A_q(V,W).\] 
This extends to a bialgebra map. 
Now we define the map $(F^{[1]})''_{V,W}$; it is the composition 
\[(1_{F(W)}\otimes \operatorname{Fr}) \circ F''_{V,W}:F(V)\to F(W)\otimes A_1(W,V)\to F(W)\otimes A_q(W,V).\]
It is not hard to see that $(F^{[1]})''_{V,W}$ satisfies the properties needed to make $F^{[1]}$ into a quantum polynomial functor.

Consider $I^{[1]}\in \mathcal{P}_{q,1}^l$ where $I\in\mathcal{P}^1$ is the identity polynomial $\mathcal{V}=\Gamma^1\mathcal{V}\to\mathcal{V}$.
Then for any $d$, the composition
\[-\circ - :\mathcal{P}^d_{q,l}\otimes \mathcal P^l_{q,1}\to \mathcal P^{dl}_{q,1}\]
induces the functor
\[-\circ I^{[1]}:\mathcal{P}^d_{q,l}\to \mathcal P^{dl}_{q,1}.\]
Since the Frobenius $I^{[1]}$ lives only in the category $\mathcal{P}_{q,1}^l$, we cannot compose the Frobenius with itself multiple times (as in the classical case). 

One may ask if we can define a class of objects $I^{[1]}_e\in\mathcal P^l_{q,e}$ for all $e$ which are reasonable analogues of the Frobenius. 
This will supply higher Frobenius twists using composition: \[I^{[r]}_e:=I^{[1]}_{l^{r-1}e}\circ \cdots\circ I^{[1]}_e.\] Precomposing or postcomposing $I^{[r]}_e$ provides functors between various polynomial functor categories.

However, $I^{[1]}_e$ does not come from the structure of the $q$-Schur algebras as in the $e=1$ case; if $e>1$ we do not have an analogous coalgebra map \[A_1,(V,W)\to A_q(V,W)\] where $V,W$ are $e$-Hecke pairs. 
Thus we must take a different approach to define $I^{[1]}_e$. 
One may try to define the Frobenius as the cohomology of certain complexes that are of interest by themselves.

Classically (by this we mean $q=1$ and $\text{char}(k)=p$),
we have the following exact sequence of polynomial functors:
\begin{equation}\label{frob1}
0\to I^{[1]}\to S^p\to \Gamma^p\to I^{[1]}\to 0.
\end{equation}
That is, one can define the Frobenius polynomial $I^{[1]}\in \mathcal P^p_{1,e}$ as either the kernel or the cokernel of the middle map in \eqref{frob1}.
Alternatively, 
the following complex, called the $d$-th de Rham complex, has nontrivial cohomology when $p$ divides $d$. 
\begin{equation}\label{deRham}
 0 \to S^d \to S^{d-1} \otimes \Lambda^1 \to S^{d-2} \otimes \Lambda^2 \to \cdots \to \Lambda^d \to 0  
\end{equation}
If $d=ap$, its cohomology is given by the Frobenius twist of the $(a-1)p$-th de Rham complex. 

One can try to quantize these complexes. That gives a way to define $I^{[1]}_e$.
Note that all the objects in \eqref{frob1} and \eqref{deRham} except the Frobenius are defined in $\pde$ for all $e$.
Also note that the quantum symmetric powers and quantum divided powers usually behave very differently when we move away from the degree $1$ case to the degree $e$ case. For example, the dimension of $S^d_q(V)$ depends on the $A_q(n,n)$-comodule structure of $V$ and not only on the dimension of $V$ (this phenomenon is investigated in \cite{BZ}). Therefore the dimension of $I^{[1]}_e(V)$ for an $e$-Hecke pair $V$ might be different from the dimension of $V$, in contrary to the classical case. In particular, we cannot obtain $I^{[1]}_e(V)$ from the underlying space of $V$ by just twisting the module structure. 

We note that the middle map in the exact sequence \eqref{frob1} can be quantized into a map between polynomial fucntors acting on $1$-Hecke pairs and for $q$ a root of unity (and $\text{char}(k)=0$) we can define the Frobenius functor as either the kernel or the image of that map. This definition of the Frobenius coincides with definition of $I^{[1]}$ via the Frobenius twist. This a sign that a homological approach to defining the Frobenius in the quantum case is worth investigating. 

This discussion can be the starting point of further investigations. One can try to define the quantum Frobenius twist as mentioned above, and try to understand its properties. Then, one can try to understand its role in quantum theory, or its uses in cohomology theory. 

\bibliography{bib.bib}
\bibliographystyle{alpha}  

\end{document}